\newcommand{\CC}{\mathbb{C}}
\newcommand{\QQ}{\mathbb{Q}}
\newcommand{\ZZ}{\mathbb{Z}}
\newcommand{\kK}{\mathcal{K}}
\newtheorem{lemma}{Lemma}[section]
\newtheorem{conjecture}[lemma]{Conjecture}
\newtheorem{corollary}[lemma]{Corollary}
\newtheorem{theorem}[lemma]{Theorem}
\newtheorem{definition}[lemma]{Definition}
\newtheorem{example}[lemma]{Example}
\newtheorem{remark}[lemma]{Remark}
\newenvironment{proof}{{\bf
     Proof.}}{$\blacksquare$ \vspace{2mm}}
\DeclareMathOperator{\Kh}{Kh}
\DeclareMathOperator{\Coef}{Coef}
\DeclareMathOperator{\MF}{MF}
\DeclareMathOperator{\HHH}{HH}
\newcommand{\Sl}{\mathfrak{sl}}
\title{On stable Khovanov homology of torus knots}
\author{E. Gorsky,  A. Oblomkov, J. Rasmussen}
\date{}
\begin{document}
\maketitle

\begin{abstract}
We conjecture that the 
stable Khovanov homology of torus knots can be described  as the Koszul homology
of an explicit non-regular sequence of quadratic polynomials.
The corresponding Poincar\'e series turns out to be related to the Rogers-Ramanujan identity.
%From physical perspective, our construction corresponds to the BRST-like quantization of
%a field theory on a circle with cubic potential.
\end{abstract}

%\begin{quote}
%\flushright
%{\it "But I was stupid," said Miss Marple, "very stupid.\\ Everything fitted in really, you see."

%Agatha Christie, A Caribbean Mystery}
%\end{quote}

%\tableofcontents

\section{Introduction}

In \cite{kho} Khovanov constructed a knot homology theory which categorifies the Jones polynomial using a combinatorial construction in terms of a knot projection. Following the early computations of Bar-Natan and Shumakovitch \cite{bnat1,katlas,KhoHo}, it became evident that the torus knots \(T(n,m)\)  had ``interesting" Khovanov homology, in the sense that their homology was much larger than might have been guessed from looking at the corresponding Jones polynomial, had torsion of high order, {\it etc.} Further advances in computation, most notably Bar-Natan's geometric Khovanov homology \cite{bnat2}, have enabled us to calculate Khovanov homology of torus knots up through \(T(7,n)\), where \(n\) is relatively large \cite{katlas,shucomm}. These calculations have tended to confirm our first impression of overall complexity.

Nevertheless, there are indications that the Khovanov homology of torus knots is not only interesting, but may be important to our understanding of Khovanov homology as a whole. The first result in this direction is the theorem of Sto\v si\'c \cite{stosic}, who proved that if we fix \(n\) and allow \(m\) to vary, then (after a suitable renormalization), the groups \(\Kh(T(n,m))\) tend to a well-defined limit, which we denote by \(\Kh(T(n,\infty))\). More recently, Rozansky \cite{rozansky} has shown that 
the Khovanov complex of the infinite torus braid provides a categorified version of the Jones-Wenzl projector, and thus should play an important role in the theory of colored Khovanov homology %\cite{ck}, 
\cite{fss,gw,webster}.
 In this framework, \(\Kh(T(n,\infty))\) appears as the \(n\)-colored Khovanov homology of the unknot.

In this paper, we consider a conjectural description of \(\Kh(T(n,\infty))\) for all \(n\):

\begin{conjecture}
\label{conj:main}
The unreduced stable Khovanov homology \(\Kh(T(n,\infty))\)  is dual to the homology of the differential graded algebra generated by even variables $x_0,\ldots,x_{n-1}$ and odd variables $\xi_0,\ldots,\xi_{n-1},$ equipped with the differential $d_2$ defined by 
$$d_2(\xi_{k})=\sum_{i=0}^{k}x_{i}x_{k-i} \quad \text{and}  \quad d_2(x_k) = 0.$$
Equivalently, this is the 
 Koszul complex determined by the (nonregular) sequence  \(d_2(\xi_k)\) for \(k=0,\ldots, n-1\). 

\end{conjecture}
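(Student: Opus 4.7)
The natural starting point is Rozansky's result that the Khovanov complex of the infinite torus braid $B_n^\infty$ categorifies the Jones--Wenzl projector $P_n$. Under this identification, $\Kh(T(n,\infty))$ is the categorical trace (Hochschild homology) of $P_n$, viewed as an endomorphism in Bar-Natan's cobordism category; equivalently, it is the $n$-colored Khovanov homology of the unknot mentioned in the introduction. The plan is to compute this trace directly and match it with the stated DG algebra.

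On the algebraic side, the even generators $x_0, \ldots, x_{n-1}$ should arise from dot cobordisms on the $n$ parallel strands (suitably symmetrized), descending to classes in the Hochschild homology of $P_n$. The quadratic relations $d_2(\xi_k) = \sum_{i=0}^{k} x_i x_{k-i}$ should emerge from Bar-Natan's neck-cutting and sphere-evaluation rules, combined with the idempotency of $P_n$. The odd generators $\xi_k$ then supply explicit null-homotopies witnessing these relations at the chain level --- they record the secondary data of \emph{how} the quadratic expressions become null-homotopic inside the trace of $P_n$, not merely that they vanish on homology.

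To promote this picture into a proof one would: (i) compute the bigraded Poincar\'e series of the Koszul DG algebra and verify it matches the Rogers--Ramanujan form anticipated for stable torus knot Khovanov homology; (ii) construct an explicit DG morphism from the algebra generated by $\{x_i, \xi_i\}$ to a chain-level model for the trace of $P_n$, such as Rozansky's or Cooper--Krushkal's; (iii) conclude from (i) and (ii) that the induced map on homology is an isomorphism.

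The main obstacle is (ii). The sequence $\{\sum_{i=0}^{k} x_i x_{k-i}\}_{k=0}^{n-1}$ is manifestly non-regular, so the Koszul DG algebra carries an infinite tower of secondary, tertiary, \ldots syzygies encoded by its differential and by higher $A_\infty$-operations. Matching this tower term-by-term with the higher homotopies appearing in the inductive construction of $P_n$ seems to require a model for the Jones--Wenzl projector that is itself genuinely DG over the dot algebra, rather than merely a chain complex up to homotopy equivalence. Producing such a model --- or, alternatively, showing directly that the two sides carry quasi-isomorphic $A_\infty$-structures --- appears to be the heart of the problem, which is presumably why the statement is posed as a conjecture.
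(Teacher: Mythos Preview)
The statement you are addressing is Conjecture~\ref{conj:main}, the main open conjecture of the paper; the paper does not contain a proof of it. What the paper offers instead is \emph{evidence}: verification for $n=2,3$ by direct comparison with known Khovanov homology (Turner's computation for $T(3,n)$, the elementary $T(2,\infty)$ case), computer checks over $\mathbb{Q}$ and $\mathbb{Z}/2$ against Shumakovitch's and JavaKh data in the stable range for $n\le 7$, and a torsion calculation showing that the specific coefficients $\alpha_{ik}=1$ in $d_2$ (as opposed to generic ones) predict $\mathbb{Z}/p$-torsion that is actually observed in $\Kh(T(p,\infty))$ for $p=5,7$. There is no structural argument in the paper connecting the Koszul complex to a model of the categorified projector.

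Your proposal is therefore not really comparable to anything in the paper --- you are sketching a \emph{strategy} for a proof that the authors do not attempt. The outline you give (trace of Rozansky's categorified $P_n$, dots as even generators, neck-cutting producing the quadratic relations, $\xi_k$ as null-homotopies) is a plausible heuristic, and you correctly identify the essential obstacle: step (ii), building a genuine DG map to a chain-level model of $P_n$ and controlling the higher syzygies coming from non-regularity. That is exactly the gap, and it is a real one; nothing you write closes it. Your final sentence is accurate --- this is why the statement is a conjecture --- so what you have written is an honest assessment of the difficulty rather than a proof. If you intend this as a proof proposal, it is incomplete at the crucial step; if you intend it as commentary on why the conjecture is hard, it is reasonable and goes somewhat beyond what the paper itself says about potential proof strategies.
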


\begin{remark}
The homology of the chain complex described in the conjecture should be  the \(\Sl(2)\) Khovanov-Rozansky homology \cite{khoro1} of  \(T(n,\infty)\). This, in turn, is dual to the ordinary Khovanov homology (in the usual sense that they are homologies of dual chain complexes.) 
\end{remark}

Khovanov homology is bigraded; it is equipped with both {\it polynomial} (\(q\)) and {\it homological} (\(t\)) gradings. With respect to the usual normalizations for these gradings, 
the generators \(x_k\) and \(\xi_k\)  are graded as follows:
$$\deg(x_k)=q^{2k+2}t^{2k},\quad \deg(\xi_{k})=q^{2k+4}t^{2k+1}.$$
The differential \(d_2\) preserves the $q$-grading and lowers the $t$-grading by $1$. %({\bf Jake: shouldn't it lower the t-grading?}). 

\begin{definition}
We will denote the homology of $d_2$ by $\Kh_{alg}(n,\infty)$.
\end{definition}

Conjecture~\ref{conj:main} arose in our work with Shende \cite{GORS} on the relation between the HOMFLY-PT  homology \cite{khoro2} of torus knots and the representation theory of the rational Cherednik algebra. More specifically, it is known \cite{dgr,khdiff} that one can pass from the HOMFLY-PT homology of a knot \(K\) to its \(\Sl(N)\) Khovanov-Rozansky homology  \cite{khoro1} by means of a spectral sequence. The main conjecture of \cite{GORS} relates the HOMFLY-PT homology of \(T(n,m)\) to the representation theory of the rational Cherednik algebra. On the right-hand side of this equivalence, it is possible to construct certain natural differentials which we believe should correspond to the differentials needed to pass to the \(\Sl(N)\) homology. We arrived at the conjecture above by computing these representation-theoretic differentials for $N=2$ in the limiting case \(m \to \infty\).

\begin{remark}
General considerations about the HOMFLY-PT homology and the differentials on it suggest that 
\(\Kh(T(n,\infty))\) should be the homology of 
\(\ZZ[x_0, \ldots, x_n] \otimes \Lambda^*[\xi_0,\ldots, \xi_n]\) with respect to a differential \(d_2'\) of the form
$$ d_2'(\xi_i) = \sum_{i=0}^k\alpha_{ik} x_i x_{k-i}$$
for some  \(\alpha_{ik}\). The content of the calculation in \cite{GORS} is that  all \(\alpha_{ik}\) should be equal to \(1\). 
\end{remark}

The  first goal of the paper is  to summarize the computational evidence supporting Conjecture~\ref{conj:main}. 
In light of the remark, it is important to check that the conjecture predicts properties of the Khovanov homology which would not be predicted by \(d_2'\) with a generic choice of \(\alpha_{ik}\). In section~ \ref{sec:evidence} and the appendices, we give examples where this is the case using both homology with coefficients in \(\ZZ/p\) and homology with coefficients in \(\QQ\). 

Our second goal is to  investigate the underlying structure of \(\Kh_{alg}\). 
This homology is especially simple if we use \(\ZZ/2\) coefficients:
\begin{theorem}
\(\Kh_{alg}(n,\infty;\ZZ/2)\) has the following Poincar\'e series:
$$P_{n}(q,t;\mathbb{Z}_2)=\prod_{i=0}^{n-1}\frac{(1+q^{2i+4}t^{2i+1})}{ (1-q^{2i+2}t^{2i})}\prod_{i=0}^{\lfloor\frac{n-1}{2}\rfloor}\frac{(1-q^{4i+4}t^{4i})}{ (1+q^{4i+4}t^{4i+1})}.$$
\end{theorem}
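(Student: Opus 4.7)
The plan is to exploit two features that make the problem especially tractable at $p=2$: the quadratic differential $d_2(\xi_k)=\sum_{i=0}^k x_i x_{k-i}$ collapses dramatically modulo $2$, and what remains is a genuine Koszul complex on a regular sequence.

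First I would observe that, modulo $2$, every cross term $x_i x_{k-i}$ with $i\neq k-i$ is cancelled by its twin $x_{k-i}x_i$, so only diagonal terms survive:
$$d_2(\xi_{2m+1})=0 \quad\text{and}\quad d_2(\xi_{2m})=x_m^2.$$
The odd-indexed $\xi_{2m+1}$ become permanent cycles, while the even-indexed $\xi_{2m}$ serve as Koszul generators for the sequence $x_0^2,x_1^2,\ldots,x_{\lfloor(n-1)/2\rfloor}^2$.

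Next I would present the DGA as a tensor product (over $\ZZ/2$) of three mutually commuting factors: the polynomial algebra in the ``extra'' variables $x_j$ with $j>\lfloor(n-1)/2\rfloor$ (zero differential), the exterior algebra on the $\lfloor n/2\rfloor$ odd-indexed $\xi_{2m+1}$ (zero differential), and the Koszul complex $K(x_0^2,\ldots,x_{\lfloor(n-1)/2\rfloor}^2)$ over $\ZZ/2[x_0,\ldots,x_{\lfloor(n-1)/2\rfloor}]$. Any sequence of positive powers of distinct variables in a polynomial ring over a field is regular, so the homology of this last factor is concentrated in homological degree zero and equals the quotient $\ZZ/2[x_0,\ldots,x_{\lfloor(n-1)/2\rfloor}]/(x_0^2,\ldots,x_{\lfloor(n-1)/2\rfloor}^2)$. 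Since everything is free over $\ZZ/2$, K\"unneth gives the total homology as the tensor product of the three pieces.

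Finally I would multiply the Poincar\'e series of the three factors using $\deg(x_k)=q^{2k+2}t^{2k}$ and $\deg(\xi_k)=q^{2k+4}t^{2k+1}$. A brief rearrangement -- rewriting the exterior factor on odd-indexed $\xi$'s as the full exterior algebra on all $\xi_i$ divided by the factor $\prod_{i=0}^{\lfloor(n-1)/2\rfloor}(1+q^{4i+4}t^{4i+1})$ accounting for the even-indexed $\xi_{2i}$'s -- yields the stated formula. There is no substantive obstacle in the argument; the calculation at $p=2$ is essentially formal once the collapse of $d_2$ is observed, which is precisely what makes the $\ZZ/2$ answer so much cleaner than the integral or rational case.
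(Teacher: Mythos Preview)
Your proposal is correct and follows the same approach as the paper: both rest on the observation that modulo $2$ the differential collapses to $d_2(\xi_{2m})=x_m^2$, $d_2(\xi_{2m+1})=0$, after which each $\xi_{2i}$ kills $x_i^2$. The paper's proof is a single sentence that leaves the regularity/K\"unneth bookkeeping implicit, whereas you spell out the tensor decomposition and the Poincar\'e-series rearrangement explicitly; but the underlying argument is identical.
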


%{\bf Eugene: do we want to say something on odd torsion in the intro? Jake: I'd like to have a section where we write down explicitly what we (you, really) checked. I think it should go in there.}

With rational coefficients, the homology is more complicated. In section~\ref{Sec:mus}, we construct some explicit elements of \(\Kh_{alg}(n,\infty;\QQ)\),
 as well as some relations which they satisfy. This leads to the following
 \begin{conjecture}
 \label{conj:presentation}
 As an algebra over \(\QQ\), \(\Kh_{alg}(n,\infty)\) has a presentation with $n$ even generators \(x_0,\ldots, x_{n-1}\) and $(n-1)$ odd generators \(\mu_0,\ldots,\mu_{n-2}\) (where \(\mu_{i}\) has bidegree \(q^{2i+8}t^{2i+3}\)) and relations of the form 
 \begin{equation}
%\label{voarels}
x(z)^2=0,\quad x(z)\mu(z)=0,\quad \ddot{x}(z)\mu(z)-\dot{x}(z)\dot{\mu}(z)=0,\quad \mu(z)\dot{\mu}(z)=0
\end{equation}
where $ x(z) = \sum_{i=0}^{n-1} x_iz^i$  $\mu(z) = \sum_{i=0}^{n-2} \mu_i z^i$, and each equation above is to be interpreted modulo \(z^n\). 
 \end{conjecture}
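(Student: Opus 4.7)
The plan is to construct an algebra map $\phi\colon A \to \Kh_{alg}(n,\infty;\QQ)$ from the presented algebra $A$ (generators $x_i,\mu_j$ modulo the four relation families) to the homology, and then show $\phi$ is both surjective and injective. Well-definedness of $\phi$ reduces to (i) exhibiting cocycle representatives $\widetilde\mu_j \in \QQ[x_\bullet]\otimes\Lambda[\xi_\bullet]$ of bidegree $q^{2j+8}t^{2j+3}$, and (ii) verifying each of the four relation families at the level of homology. For (i) the natural construction is packaged by a generating series $\widetilde\mu(z) \equiv \dot\xi(z)\,x(z) - \xi(z)\,\dot x(z) \pmod{z^n}$ (up to corrections that make it a $d_2$-cycle), chosen so that $d_2\widetilde\mu \equiv 0$ using $d_2\xi(z) \equiv x(z)^2$ and the graded Leibniz rule. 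For (ii) one exhibits explicit $d_2$-primitives: $x(z)^2 = d_2\xi(z)$ is tautological; for $x(z)\widetilde\mu(z)\equiv 0$ the natural witness is a quadratic expression in $\xi$; the ``Wronskian'' relation $\ddot x\,\widetilde\mu - \dot x\,\dot{\widetilde\mu}\equiv 0$ should follow by differentiating the primitive used for $x\widetilde\mu$; and $\widetilde\mu\,\dot{\widetilde\mu}\equiv 0$ is checked analogously. The paper notes that this construction is already carried out in Section~\ref{Sec:mus}.

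Surjectivity of $\phi$ is the heart of the matter. I would filter the DGA by $\xi$-degree; the associated graded is the Koszul complex of the quadratic sequence $s_k(x) := \sum_{i+j=k} x_i x_j$ for $k=0,\ldots,n-1$ inside $\QQ[x_0,\ldots,x_{n-1}]$, and the $E_1$ page of the resulting spectral sequence is the Koszul homology of this (non-regular) sequence. One then has to show that every class in this Koszul homology is realized by a polynomial expression in the $x_i$ and the $\mu_j$, and that such representatives survive to $E_\infty$; any remaining extension problems in passing from $E_\infty$ to $\Kh_{alg}$ are resolved by lifting $E_\infty$-classes to genuine cycles using the explicit formula for $\widetilde\mu(z)$.

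Finally, injectivity would follow by computing the Poincar\'e series of $A$ directly from its presentation and matching it against that of $\Kh_{alg}(n,\infty;\QQ)$. For $A$, first pass to the even subalgebra $R_n := \QQ[x_0,\ldots,x_{n-1}]/(x(z)^2 \bmod z^n)$, whose Hilbert series can be extracted by a Gr\"obner-basis analysis (the ideal is generated by $n$ explicit quadrics); then view $A$ as an $R_n$-module generated by products of the $\mu_j$, cut down by the remaining three relation families. The rational Poincar\'e series of $\Kh_{alg}$ itself must be computed independently, perhaps by adapting the argument underlying the $\ZZ/2$ theorem already stated in the introduction. The main obstacles are (a) surjectivity---additional unanticipated cycles in the Koszul homology of the non-regular quadric sequence could in principle appear, and ruling them out requires a detailed understanding of the syzygies of $s_0,\ldots,s_{n-1}$---and (b) closing the Hilbert-series computation, which depends on knowing a rational-coefficient analogue of the $\ZZ/2$ theorem that is not presently available and whose form the conjecture implicitly predicts.
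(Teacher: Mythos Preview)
The statement you are trying to prove is a \emph{conjecture} in the paper, not a theorem: the paper does not give a proof, and your obstacles (a) and (b) are precisely the reasons it remains open. What the paper actually does is your step (i)--(ii) only. It constructs the cycles explicitly via
\[
\mu(z)=2\dot{x}(z)\,\xi(z)-x(z)\,\dot{\xi}(z)
\]
(your tentative formula $\dot\xi\,x-\xi\,\dot x$ is not a cycle; one needs the coefficient $2$), and it exhibits the primitives directly:
\[
x\mu=d_2(\xi\dot\xi),\qquad \ddot{x}\mu-\dot{x}\dot\mu=-\tfrac12\,d_2(\dot\xi\ddot\xi),\qquad \mu\dot\mu=d_2(\xi\dot\xi\ddot\xi).
\]
So the map $\phi$ is well-defined, but surjectivity (``the homology is generated by $x_i$ and $\mu_s$'') and injectivity (``these are all the relations'') are stated as separate Conjectures~\ref{gens} and~\ref{rels}, verified only by computer for $n\le 7$.

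Your proposed route to surjectivity via the $\xi$-degree filtration does not obviously succeed: the associated graded of $d_2$ with respect to $\xi$-degree is just $d_2$ itself (it already lowers $\xi$-degree by exactly one), so the spectral sequence degenerates trivially and gives no leverage. The genuine difficulty is controlling the Koszul homology of the non-regular sequence $\{s_k\}$ in all degrees, and the paper offers no mechanism for this beyond the partial results it cites from Feigin's work (which handles only $n=\infty$ and the first homology). For injectivity, your Hilbert-series strategy is exactly what the paper pursues in the limit $n\to\infty$ (Theorem~\ref{shu}), but for finite $n$ the rational Poincar\'e series of $\Kh_{alg}$ is itself only conjectural (equation~\eqref{pn}), so there is nothing independent to match against.
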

 
Following ideas of Feigin and  Stoyanovsky (\cite{fs}, see also \cite{lf,loktev}), we derive  a conjectural explicit formula for the Poincar\'e polynomial of \(\Kh(T(n,\infty))\).  Feigin and Stoyanovsky studied the structure of the coinvariants for the integrable representation of $\widehat{\Sl(2)}$ at level 1 using the vertex operator equations of Lepowsky and Primc (\cite{lp}), which turn out to be analogous to our Koszul differential. 
The resulting stable homology is described by the equation (\ref{pnintro}) for the unreduced theory, and by the equation (\ref{pnred}) for the reduced theory. % ({\bf Jake: the unreduced equation is right below!)} 
They generalize the result of Feigin and Stoyanovsky, which is itself a generalization of the Rogers-Ramanujan identity (\cite{andrews}).

\begin{conjecture}
The Poincar{\'e} series of \(\Kh_{alg}(n,\infty;\QQ) \) can be expressed as
\begin{equation}
\label{pnintro}
P_{n}(q,t)=\frac{1}{ \prod_{k=1}^{n}(1-q^{2k}t^{2k-2})}\sum_{p=0}^{\infty}(-1)^{p}\prod_{k=1}^{p}(1-q^{2k}t^{2k-2})\times \\
\end{equation}
$$\times\prod_{k=3p+1}^{n-1}(1+q^{2k+6}t^{2k+1})\prod_{k=1}^{2p-1}(1+q^{2k+2}t^{2k-1})\times$$
$$[q^{5p^2+p}t^{5p^2-3p}(1+\chi^+_p q^{6p+4}t^{6p-1})(1+q^{6p+6}t^{6p+1})\binom{n-2p+1}{p}_{z}+$$
$$q^{5p^2+7p+2}t^{5p^2+3p-1}(1+q^{6p+6}t^{6p+1})(1-q^{2p+2}t^{2p})\binom{n-2p}{p}_{z}-$$
$$q^{5p^2+9p+4}t^{5p^2+5p}(1+q^{2p+2} t^{2p+1})(1+\chi^+_p q^{4p+2}t^{4p-1})\binom{n-2p-1}{p}_{z}],$$
where \(\chi_p^+=0\) when \(p=0\), \(\chi_p^+=1\) for \(p>0\), 
$$z=q^2t^2,\quad\text{and} \quad  \binom{a}{b}_{z}=\frac{(1-z)\cdots (1-z^a)}{ (1-z)\cdots (1-z^{b})(1-z)\cdots (1-z^{a-b})}.$$
\end{conjecture}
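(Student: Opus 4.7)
The plan is to reduce the computation of $P_n(q,t)$ to an explicit combinatorial enumeration via Conjecture~\ref{conj:presentation} and then adapt the Feigin--Stoyanovsky method of \cite{fs}. First, assuming the presentation, I realize $\Kh_{alg}(n,\infty;\QQ)$ as the quotient of $\QQ[x_0,\ldots,x_{n-1}]\otimes \Lambda^{*}[\mu_0,\ldots,\mu_{n-2}]$ by the ideal generated by the four quadratic families
\[
x(z)^2,\quad x(z)\mu(z),\quad \ddot x(z)\mu(z)-\dot x(z)\dot\mu(z),\quad \mu(z)\dot\mu(z) \pmod{z^n}.
\]
Choose a monomial order refining the bigrading, pass to the initial ideal, and identify an ``admissible'' monomial basis avoiding the leading terms. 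The relation $x(z)^2\equiv 0$ in particular imposes a two-spacing condition on the multiplicity sequence of the $x_i$, the direct analogue of the Lepowsky--Primc vertex operator identity central to \cite{fs,lp}.

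Next, I would stratify the admissible monomials according to the number $p$ of ``defect pairs''---occurrences of the $x$-configurations that would violate strict admissibility if left unpaired. For each $p$ the defect contributes a fixed minimum bidegree, yielding the Rogers--Ramanujan-type factor $q^{5p^2+\cdots}t^{5p^2-\cdots}$; the remaining admissible $x$-monomials are counted by the Gaussian binomials $\binom{n-2p+1}{p}_z$, $\binom{n-2p}{p}_z$, $\binom{n-2p-1}{p}_z$, which correspond to the three possible right-boundary behaviors of the admissible string (none, one, or two extra free indices). The product $\prod_{k=3p+1}^{n-1}(1+q^{2k+6}t^{2k+1})$ encodes the fermionic generators $\mu_i$ still free after $p$ defects have been absorbed, while the product $\prod_{k=1}^{2p-1}(1+q^{2k+2}t^{2k-1})$ records secondary fermions arising from the mixed relations $x(z)\mu(z)=0$ and $\ddot x(z)\mu(z)-\dot x(z)\dot\mu(z)=0$. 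The alternating sign $(-1)^p$ together with the prefactor $\prod_{k=1}^p(1-q^{2k}t^{2k-2})$ then records an inclusion--exclusion over the defect locations, parallel to the pentagonal-number derivation in the classical Rogers--Ramanujan proof.

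The main obstacles are twofold. First, one must prove Conjecture~\ref{conj:presentation} itself. The proposed generators $\mu_i$ can be written explicitly as Koszul cycles and the four listed relations verified directly, but showing that they generate \emph{all} relations in $\Kh_{alg}(n,\infty;\QQ)$ requires either a Gr\"obner/spectral-sequence analysis of the Koszul complex of $\{d_2(\xi_k)\}_{k=0}^{n-1}$ or an inductive ``one-variable-at-a-time'' argument analogous to the stabilization proof of Sto\v{s}i\'c \cite{stosic}. Second, even granting the presentation, the combinatorics is substantially richer than in the level-one $\widehat{\Sl(2)}$ case: two distinct bosonic and fermionic Koszul-type relations interact through the mixed relations, so one needs a double filtration or a bicomplex to separate $x$-admissibility from $\mu$-admissibility. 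Matching the resulting two-variable identity with the explicit alternating sum will then require a careful tracking of the three right-endpoint cases against the three summands inside the bracket, together with the correction $\chi_p^+$ that arises because the $p=0$ defect stratum has no lower boundary term.
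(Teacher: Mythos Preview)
The statement you are trying to prove is a \emph{conjecture} in the paper, not a theorem: the authors give no proof of it at all. Their only justification is that the formula \eqref{pnintro} was obtained by fitting computer computations of the Koszul homology (via {\tt Singular}) for $n\le 7$ and comparing with Shumakovitch's Khovanov homology data; they explicitly describe it as coming ``from computer experiments'' and as a ``potential generalization'' of the bosonic side of the Feigin--Stoyanovsky identity. So there is nothing in the paper to compare your argument to.

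Your proposal is a plausible research outline rather than a proof, and you are candid about this. Two concrete gaps are worth naming. First, the entire scheme rests on Conjecture~\ref{conj:presentation}, which is also unproven in the paper (verified only for $n\le 7$); your suggestion of a Gr\"obner or inductive argument is reasonable but no such argument is known, and the paper's own evidence (Feigin's Virasoro result) only establishes generation in $\xi$-degree~1. Second, even granting the presentation, your combinatorial sketch is largely heuristic: you assert that the three Gaussian binomials correspond to ``right-boundary behaviors'' and that the two fermionic products encode ``free'' versus ``secondary'' $\mu$-generators, but you do not define the stratification precisely, identify the initial ideal, or explain why inclusion--exclusion over ``defect pairs'' produces exactly the bracketed three-term expression with the correction $\chi_p^+$. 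The Feigin--Stoyanovsky argument you invoke treats only the bottom level (no $\mu$'s), and the paper itself does not claim to know how to extend it; turning your outline into a proof would require substantial new combinatorics beyond what either you or the paper supply.
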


The following conjecture is due to  Shumakovitch and  Turner:  

\begin{conjecture}(\cite{shucomm})
\label{shuc}
Let $\mathcal{K}_n(q,t)$ denote the Poincar\'e polynomials of the Khovanov homology of the $(n,n+1)$ torus knot.
Then
\begin{equation}
\label{recursion}
\kK_n(q,t)=\kK_{n-1}(q,t)+\kK_{n-2}(q,t)q^{2n}t^{2n-2}+\kK_{n-3}(q,t)q^{2n+4}t^{2n-1}.
\end{equation}
\end{conjecture}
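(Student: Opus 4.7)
The plan is to establish (\ref{recursion}) by exhibiting an iterated mapping-cone decomposition of the Khovanov complex of $T(n,n+1)$ and then showing that the resulting two long exact sequences split at the level of Poincar\'e polynomials. The ratio $q^{4}t$ between the weight of the $\kK_{n-3}$ term and that of the $\kK_{n-2}$ term is exactly the bidegree shift one picks up by passing from the 0-resolution to the 1-resolution at a positive crossing (after writhe correction), so the structure of (\ref{recursion}) is consistent with two nested applications of the Khovanov skein exact triangle.

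Concretely, I would present $T(n,n+1)$ as the closure of $\beta_n=(\sigma_1\cdots\sigma_{n-1})^{n+1}\in B_n$ and apply the skein triangle to a $\sigma_{n-1}$ crossing from the last fundamental block. The $0$-resolution, after a Markov destabilization of strand $n$, should be isotopic to $T(n-1,n)$, accounting for the $\kK_{n-1}$ summand with no shift. The $1$-resolution still carries a positive $\sigma_{n-1}$ coming from the previous fundamental block; applying the skein triangle to that crossing, I expect the $0$-resolution to reduce, after a second destabilization, to $T(n-2,n-1)$ with shift $q^{2n}t^{2n-2}$, and the $1$-resolution to $T(n-3,n-2)$ with shift $q^{2n+4}t^{2n-1}$. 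Tracking writhe corrections through each destabilization and Reidemeister move is the first technical point; once done, the iterated cone yields the inequality
$$\kK_n(q,t)\le \kK_{n-1}(q,t)+q^{2n}t^{2n-2}\kK_{n-2}(q,t)+q^{2n+4}t^{2n-1}\kK_{n-3}(q,t).$$

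The main obstacle will be promoting this to equality, i.e.\ showing that the two connecting differentials in the iterated cone vanish. Since the Khovanov homology of torus knots is not thin, a naive parity argument is unavailable. Two plausible routes stand out. First, a direct analysis of the connecting maps in Bar-Natan's \cite{bnat2} geometric Khovanov formalism, where the maps are represented by explicit cobordisms that one hopes to reduce to compositions cancellable by the local relations. Second, an indirect argument through the stable theory: under Conjecture~\ref{conj:main} the Poincar\'e series of $\Kh(T(n,\infty))$ is predicted by (\ref{pnintro}) and serves as an upper bound for a suitable low-$q$-grading truncation of $\kK_n$ via the stabilization of Sto\v si\'c \cite{stosic}; one would argue that the sum of the three shifted pieces already saturates this bound in the relevant bigrading range, leaving no room for non-trivial connecting maps. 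Combined with the explicit base cases $n=1,2,3$ verified against \cite{katlas,shucomm}, this would complete the proof by induction on $n$.
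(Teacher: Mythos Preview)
The statement you are attempting to prove is labeled a \emph{Conjecture} in the paper (attributed to Shumakovitch and Turner via \cite{shucomm}); the paper offers no proof and treats it throughout as open. There is therefore no ``paper's own proof'' to compare your proposal against. What the paper does do with (\ref{recursion}) is purely formal: it builds a combinatorial state-sum model $K_n$ satisfying the same recursion, and proves that the $n\to\infty$ limit of $K_n$ matches the Hilbert series of the conjectural algebraic presentation of $\Kh_{alg}$ (Theorem~\ref{shu}). None of this bears on whether the genuine Khovanov Poincar\'e polynomials $\kK_n$ actually satisfy (\ref{recursion}).

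Your proposal, correspondingly, is not a proof but a strategy with a gap you yourself flag. Two concrete issues. First, the topology in your opening step is not as stated: after the $0$-resolution of a single $\sigma_{n-1}$ in $(\sigma_1\cdots\sigma_{n-1})^{n+1}$ there remain $n$ further $\sigma_{n-1}$'s in the word, so strand $n$ is not free and a single Markov destabilization does not produce $T(n-1,n)$; any identification with $T(n-1,n)$ requires a nontrivial braid-group manipulation (e.g.\ through the centrality of the full twist) that you would need to carry out and whose writhe bookkeeping must be tracked carefully. Second, and more seriously, your ``indirect'' route to the vanishing of the connecting maps is circular: it invokes Conjecture~\ref{conj:main} and the bosonic formula (\ref{pnintro}), both of which are themselves open in the paper, so this cannot close the argument. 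The ``direct'' route via Bar-Natan cobordisms is a reasonable thing to try, but as written it is only a hope; since torus-knot Khovanov homology is thick and has large torsion, there is no soft reason for these connecting maps to vanish, and absent an actual computation your proposal does not advance beyond the inequality you derive.
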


We prove the following 

\begin{theorem}
If $K_n(q,t)$ is given by the recursion relation (\ref{recursion}) with the appropriate initial conditions and \(P_n(q,t)\) is the Hilbert series of the algebra described in Conjecture~\ref{conj:presentation}, then 
$$\lim_{n\to \infty}K_{n}(q,t)=\lim_{n\to \infty}P_{n}(q,t).$$
\end{theorem}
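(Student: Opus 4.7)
The plan is to prove that both sequences converge in an appropriate bigraded completion, and then to identify the two limits via a Rogers--Ramanujan-type identity. The strategy proceeds in three steps.

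\textbf{Step 1 (Stabilization).} Rewriting (\ref{recursion}) as
$$K_n - K_{n-1} = q^{2n}t^{2n-2}\,K_{n-2} + q^{2n+4}t^{2n-1}\,K_{n-3},$$
the right--hand side has bidegree bounded below by $(2n,2n-2)$. Hence in the graded completion in which $z = q^{2}t^{2}$ carries positive weight, the coefficient of any fixed monomial $q^{a}t^{b}$ in $K_n$ stabilizes once $n$ is sufficiently large, and $K_{\infty} := \lim_{n} K_n$ is a well--defined element of this completion. On the other side, taking the explicit sum formula for the Hilbert series of the algebra of Conjecture~\ref{conj:presentation}, the $p$-th summand has leading bidegree of order $(5p^{2},5p^{2})$ and each $z$-binomial satisfies $\binom{n-2p+1}{p}_{z} \to 1/(z;z)_{p}$ as $n\to\infty$, so only finitely many $p$ contribute to any given bidegree and $P_{\infty} := \lim_{n} P_n$ exists in the same completion.

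\textbf{Step 2 (Closed form for $K_{\infty}$).} Iterating the recursion indefinitely expresses each monomial contribution to $K_{\infty}$ uniquely as a finite decreasing sequence of ``jumps'' $(m_{1},\tau_{1}),\ldots,(m_{k},\tau_{k})$ with $\tau_{i}\in\{2,3\}$ and gap constraint $m_{i+1}\le m_{i}-\tau_{i}$, where a jump of size $2$ at level $m$ contributes $q^{2m}t^{2m-2}$ and a jump of size $3$ contributes $q^{2m+4}t^{2m-1}$, interleaved with arbitrary runs of trivial size-$1$ steps (of weight $1$). Summing over all such configurations, with the contribution of the base case tracked from the initial conditions, yields an explicit sum-side formula for $K_{\infty}$ of Andrews--Gordon type.

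\textbf{Step 3 (Matching $K_{\infty}=P_{\infty}$).} Grouping the jump configurations of Step~2 by the number $p$ of size-$3$ jumps, the geometric sum over positions of size-$2$ jumps collapses (by the standard Euler identity) into $\prod_{k\ge 1}1/(1-q^{2k}t^{2k-2})$, matching the overall denominator in the limiting $P$-formula. The sum over positions and local neighbourhoods of the size-$3$ jumps then produces the quadratic exponents $q^{5p^{2}+p}t^{5p^{2}-3p}$ together with the finite $z$-binomials and the $\chi_{p}^{+}$ boundary corrections that appear in the conjectural summand. Termwise identification completes the proof.

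\textbf{Main obstacle.} Step~3 is the delicate part. The three pieces of each summand in the conjectural formula, with their distinct $z$-binomials $\binom{n-2p+1}{p}_{z}$, $\binom{n-2p}{p}_{z}$, $\binom{n-2p-1}{p}_{z}$ and the $\chi_{p}^{+}$ corrections, encode local interactions between consecutive size-$3$ jumps and boundary effects at the extremes of the jump sequence. Matching them precisely requires a two-variable (bigraded) refinement of the Rogers--Ramanujan--Gordon identity, in the spirit of the vertex operator / Bailey-pair arguments of Feigin--Stoyanovsky \cite{fs} and Lepowsky--Primc \cite{lp} that are invoked in the introduction; once the $t$-refined identity is set up, the termwise comparison falls out, but setting it up correctly is the technical heart of the argument.
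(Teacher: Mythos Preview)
Your proposal has a genuine gap rooted in a misreading of the statement. In the theorem, $P_n(q,t)$ is defined as the Hilbert series of the algebra in Conjecture~\ref{conj:presentation}, i.e.\ the algebra on generators $x_0,\ldots,x_{n-1},\mu_0,\ldots,\mu_{n-2}$ subject to the relations~(\ref{voarels}). It is \emph{not} asserted anywhere that this Hilbert series is given by the explicit ``bosonic'' sum~(\ref{pnintro})/(\ref{pn}); that formula is a separate conjecture about $\Kh_{alg}$, and its equality with the Hilbert series of the presented algebra would require Conjectures~\ref{gens} and~\ref{rels} together with the bosonic conjecture itself. Your Step~1 already assumes this identification when you invoke ``the explicit sum formula'' and its $z$-binomial structure, and your Step~3 then tries to match $K_\infty$ against that bosonic expression. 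As you correctly diagnose in your ``Main obstacle,'' this would require the bigraded Rogers--Ramanujan--type identity~(\ref{krr}) --- but that identity is stated in the paper as a conjecture, verified only numerically. So your argument, as written, reduces the theorem to an unproved conjecture.

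The paper's proof avoids all of this and is much more elementary. It builds a combinatorial model for $K_n$: length-$n$ binary sequences avoiding the patterns $1111$ and (except at the start) $111$, weighted so that an isolated $1$ at position $i$ contributes $q^{2i+2}t^{2i}$, a block $11$ at position $i$ contributes $q^{2i+8}t^{2i+3}$, and the initial $111$ contributes $q^{12}t^5$. A one-line check shows this state sum obeys the recursion~(\ref{recursion}) with the correct initial values, hence equals $K_n$; the limit $K(q,t)$ is then computed directly (Theorem giving~(\ref{fermion})). For $P_\infty$, the paper interprets an isolated $1$ as $x_i$, a block $11$ as $\mu_i$, and the initial $111$ as $x_0\mu_1$, and shows (Theorem~\ref{shu}) that the relations~(\ref{voarels}) eliminate precisely the monomials containing $x_ix_i$, $x_ix_{i+1}$, $x_i\mu_i$, $x_{i\pm1}\mu_i$, $x_{i+2}\mu_i$, $\mu_i\mu_{i+1}$, $\mu_i\mu_{i+2}$ --- i.e.\ exactly the forbidden patterns. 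Thus the surviving monomials are in bijection with the admissible sequences, and $\lim P_n = K(q,t)$ follows. No Rogers--Ramanujan identity is needed.
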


Finally, we describe an intriguing connection to  the physical models of coloured homology proposed by Gukov,  Walcher and  Sto\v si\'c \cite{gw,gs}. In these models the homology of the unknot is constructed as the Milnor algebra of the certain potential $W_{phys}$ with an isolated singularity. 

\begin{theorem}
The homology of $d_2$ is isomorphic to the Hochschild homology of the category of matrix factorizations of a certain potential $W$.
The potential $W$ has a non-isolated singularity (for $n>1$) and coincided with a bihomogeneous part of $W_{phys}$ of bidegree $(2n+4,2n-2)$.
\end{theorem}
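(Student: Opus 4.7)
The plan is to exhibit the potential $W$ explicitly and then invoke a Hochschild--Kostant--Rosenberg--type theorem for matrix factorisations. Take
$$W \;=\; \tfrac{1}{3}\,[z^{n-1}]\,x(z)^3 \;=\; \tfrac{1}{3}\sum_{i+j+k=n-1} x_i x_j x_k \;\in\; \QQ[x_0,\ldots,x_{n-1}].$$
A direct bookkeeping check gives $\deg(W) = q^{2n+4}t^{2n-2}$: each summand contains three factors $x_l$, contributing $\sum(2l+2) = 2(n-1)+6$ to the $q$-degree and $\sum 2l = 2n-2$ to the $t$-degree. More importantly,
$$\frac{\partial W}{\partial x_k} \;=\; [z^{n-1-k}]\,x(z)^2 \;=\; d_2(\xi_{n-1-k}),$$
so the Jacobian ideal of $W$ is exactly the ideal generated by the Koszul relations $d_2(\xi_\bullet)$.

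The critical scheme $\mathrm{Sing}(W)$ is the locus $x(z)^2\equiv 0 \pmod{z^n}$. Over $\QQ$ this forces $x_i = 0$ for $i < \lceil n/2 \rceil$, so $\mathrm{Sing}(W)$ is an affine space of dimension $\lfloor n/2\rfloor$: non-isolated as soon as $n\geq 2$, and reducing to the origin only for $n=1$.

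For the core identification I would invoke the HKR--type computation of Hochschild homology of matrix factorisations (Dyckerhoff--Murfet; see also Segal, Preygel, and Polishchuk--Vaintrob): for a smooth affine $R$ with $W\in R$,
$$\HHH_*(\MF(R,W)) \;\cong\; H^*\bigl(\Omega^*_R,\, \iota_{dW}\bigr).$$
Identifying $\xi_k \leftrightarrow dx_k$, the contraction $\iota_{dW}$ sends $dx_k \mapsto \partial_k W = d_2(\xi_{n-1-k})$, so after relabelling odd generators by the involution $k\mapsto n-1-k$ the right-hand complex coincides with the Koszul complex defining $\Kh_{alg}(n,\infty)$, yielding the claimed isomorphism. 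To match $W_{phys}$, I would read off the bigrading of the potential in \cite{gw,gs} and isolate its $(2n+4,2n-2)$-component: degree counting shows the only monomials that can appear are triple products $x_i x_j x_k$ with $i+j+k=n-1$, which reassembles $W$ up to an overall scalar.

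The main obstacle is the HKR step: in the non-isolated case the computation of $\HHH_*(\MF(W))$ is a genuinely two-periodic twisted de Rham complex, and one must verify that the isomorphism is compatible with the full $(q,t)$-bigrading rather than only the $\ZZ/2$-grading intrinsic to $\MF$. A careful reconciliation of conventions---interior versus exterior Koszul, the homological grading on $\MF$ versus the $t$-grading on $\Kh_{alg}$, and signs attached to $\iota_{dW}$---is the other delicate point.
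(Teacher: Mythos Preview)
Your proposal is correct and follows essentially the same route as the paper: define the potential as (a scalar multiple of) $\Coef_{n-1}[x(z)^3]$, compute its partial derivatives to recover the Koszul relations $d_2(\xi_{n-1-k})$, and invoke Dyckerhoff's identification of $\HHH(\MF(W))$ with the Koszul homology of contraction by $dW$ on polyvector fields, matching odd generators via $\xi_k \leftrightarrow dx_{n-1-k}$. Your explicit description of $\mathrm{Sing}(W)$ as an $\lfloor n/2\rfloor$-dimensional affine space is a nice touch the paper only illustrates by example, and your caveats about the $\ZZ/2$ versus $(q,t)$ gradings are well-placed but do not obstruct the argument as stated.
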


\bigskip

We are grateful to B. Feigin, S. Gukov, M. Hagencamp, M. Khovanov,  A. Kirillov Jr., S. Loktev, L. Rozansky, M. Sto\v si\'c, J. Sussan, O. Viro, and V. Shende for the useful discussions. Special thanks to A. Shumakovitch for providing us with the valuable Khovanov homology data and explaining the Conjecture \ref{shuc}. Most of the computations of the Koszul homology were done using {\tt Singular}, a computer algebra system (\cite{singular}). The research of E. G. was partially supported by the grants
RFBR-10-01-00678, NSh-8462.2010.1 and the Simons foundation.

\section{Evidence for the Conjecture}
\label{sec:evidence}

In this section, we outline the evidence in support of Conjecture~\ref{conj:main}. We verify that the conjecture holds for \(T(n,\infty)\) in the cases \(n=2,3\), where the Khovanov homology is well-understood. We then discuss the computational evidence for larger values of \(n\). 

We define the stable Khovanov homology by
$$ \Kh(T(n,\infty)) := \lim_{m \to \infty} q^{-(n-1)(m-1)+1} \Kh(T(n,m)).$$
It is a theorem of Sto\v si\'c \cite{stosic} that this limit exists. The stable homology is normalized so that 
its Poincar{\'e} polynomial is a polynomial in \(q\) and \(t\) (rather than just a Laurent polynomial), with constant term 1. 
\subsection{\(T(2,\infty)\)}

%Following the theorem of Rozansky (\cite{rozansky}), we can think about stable Khovanov homology of torus knots as colored $sl(2)$ homology of the unknot. More specifically, the stable homology of the $(n,\infty)$ knot should correspond to the homology of the trace of the categorified Jones-Wentzl projector with $n$ strands. In \cite{ck} B. Cooper and V. Krushkal proposed a geometric construction of the categorified Jones-Wentzl projector and computed its homology for $n=2$. Yet another representation-theoretic categorification was proposed in \cite{fss}, and it is conjecturally Koszul dual to the Krushkal-Cooper construction (\cite{ss}).

%In this section we compare the Cooper-Krushkal complex to our Koszul complex for $n=2$.
%Recall that the trace of the categorified Jones-Wentzl projector fro $n=2$ is computed by the complex:

The Khovanov homology of  \(T(2,\infty)\) is well-known. In  the language of \cite{bnat3}, it can be viewed as dual to the homology of the following chain complex:  
\vskip0.15in
\begin{tikzpicture}
\draw (0,0) ellipse (0.4 and 1);
\draw (4,0) ellipse (0.4 and 1);
\draw (4,1) -- (6,1);
\draw (4,-1) -- (6,-1);
\draw (5,0) ellipse (0.2 and 0.1);
\draw (6,0) ellipse (0.4 and 1);
\draw (8,0) ellipse (0.4 and 1);
\draw (8,1) -- (10,1);
\draw (8,-1) -- (10,-1);
\draw (9,0) ellipse (0.2 and 0.1);
\draw (10,0) ellipse (0.4 and 1);
\draw (11,0) node {$\ldots$};
\draw (0,-1.3) node {$t=0$};
\draw (2,-1.3) node {$t=1$};
\draw (4,-1.3) node {$t=2$};
\draw (6,-1.3) node {$t=3$};
\draw (8,-1.3) node {$t=4$};
\draw (10,-1.3) node {$t=5$};
 \end{tikzpicture}

This picture has the following meaning. The Khovanov homology of the unknot is two-dimensional; as an algebra it can be described as 
$H_0=\mathbb{C}[x_0]/(x_0^2).$ This algebra carries a comultiplication $\mu$ defined by the equations:
$$\mu: H_0\rightarrow H_0\otimes H_0,\quad \mu(1)=1\otimes x_0+x_0\otimes 1,\quad \mu(x_0)=x_0\otimes x_0.$$
Recall that the $q$-degree of $x_0$ is equal to 2.

The complex is generated by an infinite number of copies of $H_0$ in $t$-degrees $0,2,3,4,\ldots$.
The $q$-grading in the $k$-th copy is shifted by $2k$. The maps between the $2k+1$-st copy and the $2k$-th are given by the
cobordism on the picture, which can be presented as a composition of the comultiplication and multiplication:
\begin{equation}
\label{ck2}
H_0\longleftarrow 0\longleftarrow H_0[2]\{4\}\stackrel{m\circ \mu}{\longleftarrow}H_0[3]\{6\}\stackrel{0}{\longleftarrow}H_0[4]\{8\}\stackrel{m\circ \mu}{\longleftarrow}H_0[5]\{10\}\stackrel{0}{\longleftarrow}\cdots
\end{equation}
We remark that $m\circ \mu$ coincides with  multiplication by $2x_0$ and  introduce two formal variables 
$x_1$ of bidegree $q^4t^2$ and $\xi_1$ of bidegree $q^6t^3$, where $\xi_1$ is odd.
In other words, we identify $H_0[2k]\{4k\}$ with $x_1^k\cdot H_0$ and  $H_0[2k+3]\{4k+6\}$ with $x_1^k\xi_1\cdot H_0$.

Then the complex (\ref{ck2}) can be rewritten as an algebra $H_0[x_1,\xi_1]$ with the differential $d(\xi_1)=2x_0x_1$,
which is equivalent to our Koszul model. %(\ref{d2}).

\subsection{\(T(3,\infty)\)}
With rational coefficients, the Khovanov homology of  \(T(3,n)\) was computed by Turner \cite{turner}. The Poincar{\'e} polynomial of the stable homology is
$$P_{3,\infty} = (1+q^2 + q^4t^2) + q^6t^3 \left( \frac{q^2+t+q^2t+q^4t^2+q^6t^2+q^4t^3}{1-q^6t^4}
\right).
$$
\(\Kh_{alg}(3,\infty;\mathbb{Q})\) is computed in section~\ref{subsec:algex}. Its Poincar{\'e} polynomial is easily seen to agree with the one given above.

\subsection{$\mathbb{Z}_2$ coefficients}

In many cases, the Khovanov homology with $\mathbb{Z}_2$ coefficients is simpler that the homology with rational coefficients. It turns out that the stable answers become especially simple if we work over $\mathbb{Z}_2$.

\begin{theorem}
\(\Kh_{alg}{(n,\infty;\mathbb{Z}/2)}\) has the following Poincar\'e series:
$$P_{n}(q,t;\mathbb{Z}_2)=\prod_{i=0}^{n-1}\frac{(1+q^{2i+4}t^{2i+1})}{ (1-q^{2i+2}t^{2i})}\prod_{i=0}^{\lfloor\frac{n-1}{2}\rfloor}\frac{(1-q^{4i+4}t^{4i})}{ (1+q^{4i+4}t^{4i+1})}.$$
\end{theorem}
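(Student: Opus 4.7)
The plan is to exploit a dramatic simplification of the differential $d_2$ over $\ZZ/2$. First I would note that in $d_2(\xi_k)=\sum_{i=0}^{k}x_i x_{k-i}$, every pair $\{i,k-i\}$ with $i\ne k-i$ contributes two equal terms that cancel modulo $2$, leaving $d_2(\xi_{2j+1})=0$ for every odd index and $d_2(\xi_{2j})=x_j^2$ for every even index. In particular, each simplified differential involves only one polynomial generator.

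Since the simplified $d_2$ acts on disjoint sets of generators, the whole DG-algebra $A=\ZZ/2[x_0,\ldots,x_{n-1}]\otimes\Lambda[\xi_0,\ldots,\xi_{n-1}]$ decomposes as a graded tensor product of DG-subalgebras of three types: the small Koszul pieces $A_j:=\ZZ/2[x_j]\otimes\Lambda[\xi_{2j}]$ with $d\xi_{2j}=x_j^2$ for $0\le j\le\lfloor(n-1)/2\rfloor$; free polynomial factors $\ZZ/2[x_j]$ for $\lfloor(n-1)/2\rfloor<j\le n-1$; and free exterior factors $\Lambda[\xi_{2j+1}]$ for $0\le j\le\lfloor(n-2)/2\rfloor$. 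The last two types carry zero differential.

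Next I would apply the K\"unneth isomorphism over the field $\ZZ/2$ to compute $H_*(A,d_2)$ as the tensor product of the factor homologies. Each $A_j$ is the Koszul complex of the nonzerodivisor $x_j^2\in\ZZ/2[x_j]$, so $H_*(A_j)=\ZZ/2[x_j]/(x_j^2)$ has Poincar\'e series $1+q^{2j+2}t^{2j}$. The free polynomial factors contribute $1/(1-q^{2j+2}t^{2j})$ and the free exterior factors contribute $1+q^{4j+6}t^{4j+3}$. Rewriting each $1+q^{2j+2}t^{2j}$ as $(1-q^{4j+4}t^{4j})/(1-q^{2j+2}t^{2j})$, and noting that $q^{4j+4}t^{4j+1}=\deg(\xi_{2j})$, reassembles the three products into the form stated in the theorem.

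There is no genuine obstacle: both the pairwise cancellation modulo $2$ and K\"unneth over a field are immediate. The only mildly delicate step is keeping track of the ranges $j\le\lfloor(n-1)/2\rfloor$ and $j\le\lfloor(n-2)/2\rfloor$ so that the rewriting pairs each $\xi_{2j}$ with its matching $x_j$ and counts the leftover free generators correctly; but this interplay for $n$ even versus $n$ odd is exactly what the floor functions in the stated formula already encode.
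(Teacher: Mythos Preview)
Your proposal is correct and takes essentially the same approach as the paper: the key observation is the mod-$2$ simplification $d_2(\xi_{2j})=x_j^2$, $d_2(\xi_{2j+1})=0$, after which $\xi_{2j}$ kills $x_j^2$ in homology. Your write-up is simply more explicit than the paper's very terse proof, spelling out the tensor decomposition, the K\"unneth step, and the bookkeeping that reassembles the factors into the stated product.
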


\begin{proof}
In characteristic 2 the differential $d_2$ degenerates to the following form:
$$d_2(\xi_{2k})=x_k^{2},\quad d_2(\xi_{2k+1})=0.$$
Therefore for every $0\le k\le \lfloor\frac{n-1}{ 2}\rfloor$ the odd generator $\xi_{2i}$ kills $x_i^{2}$ in the homology.
\end{proof}

We used JavaKh \cite{katlas} to verify that \(\Kh_{alg}(n,\infty;\mathbb{Z}/2)\) agrees with \(\Kh(T(n,m))\) in the stable range (\(q\)-degree \(\leq 2m\)) for \((n,m)=(3,50),(4,49),(5,29)\).% and \((7,?)\). %({\bf Jake: fill in blanks. Did we compute these ourselves, or get them from Shumakovitch?})

\subsection{$\mathbb{Q}$ coefficients}

If we use rational coefficients, the structure of \(\Kh_{alg}\) is more complicated ({\it c.f.} the conjectures in Section~\ref{sec:algebra} below.) The rational Khovanov homology of torus knots has been extensively computed by Shumakovitch \cite{shucomm}. By comparing with his results, we have verified Conjecture~\ref{conj:main} in the stable range up to \((n,m) = (7,20)\). %({\bf Jake: fill in the blanks}). 

In testing Conjecture~\ref{conj:main}, it is important to check that the predictions it makes about Khovanov homology can be distinguished from the ones we would get if we replaced the differential \(d_2(\xi_k) = \sum x_ix_{k-i}\) with  \(d_2'(\xi_k) = \sum \alpha_{ik}x_ix_{k-i}\) for generic values of \(\alpha_{ik}\). In addition to the information on torsion discussed in this section, we can see evidence of this fact with rational coefficients in the case \(n=7\). As discussed in Remark~\ref{rem:generic} below, for generic \(\alpha_{ik}\)  the homology with respect to \(d_2'\) has smaller dimension than the homology with respect to \(d_2\), and the latter groups agree with the actual Khovanov homology. 

More precisely, Remark~\ref{rem:generic} shows that for our choice of \(\alpha_{ik}\) the homology in bidegree $q^{18}t^{13}$ is one-dimensional, while it vanishes for a generic choice. Theorem 6 in \cite{stosic2} states that \(Kh^{i,*}(T(p,q)) \simeq Kh^{i,*}(T(p,q+1))\) for  $i<p+q-2$. Since $13<7+9-2$,  the coefficient at $t^{13}$ in $\Kh(T(7,\infty))$  coincides with the same coefficient for $\Kh(T(7,9))$.
The Poincar\'e polynomial for the Khovanov homology of the latter knot is presented in the Appendix C, and the term $q^{18}t^{13}$ is present.

\subsection{$\mathbb{Z}_{p}$ torsion}

The odd torsion in Khovanov homology was studied in \cite{asaeda} and \cite{shutorsion} (see also \cite{pstorsion})
for some classes of knots, and \cite{bnat2} shows how complicated the torsion can be on the example of $(7,8)$ knot.
%In ?? ({\bf Jake: does either reference explicitly conjecture this?}) Eugene: no - both Alex and Asaeda-Pritycki study Z_2-torsion only.
%Probably Dror was the first to observe the odd torsion, right?
It was suggested that  Khovanov homology can have torsion of arbitrarily large order. The following calculation provides support for this claim, as well as some additional evidence in favor of Conjecture~\ref{conj:main}.

\begin{theorem}
Let $p>3$ be a prime number. Then $\Kh_{alg}(p,\infty)$ has nontrivial $\mathbb{Z}_{p}$-torsion at bidegree
$q^{2p+6}t^{2p}$.
\end{theorem}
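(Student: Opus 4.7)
The plan is to exhibit $\mathbb{Z}/p$-torsion in $\Kh_{alg}(p,\infty)$ at bidegree $q^{2p+6}t^{2p}$ by producing a class in $\Kh_{alg}(p,\infty;\mathbb{Z}/p)$ at the adjacent bidegree $q^{2p+6}t^{2p+1}$ which is nontrivial for a purely coefficient-theoretic reason (a cycle mod $p$ but not over $\mathbb{Z}$), and then applying the universal coefficient theorem. The candidate chain is
$$\eta_p \;=\; \sum_{k=1}^{p-1} k\,\xi_k\,x_{p-k}\, ,$$
which lies in bidegree $q^{2p+6}t^{2p+1}$. Writing $f_k=d_2(\xi_k)=\sum_{i+j=k}x_ix_j$, one has $d_2\eta_p=\sum_{k=1}^{p-1}kf_kx_{p-k}$; rewriting this as a sum over ordered triples $(i,j,l)$ with $i+j+l=p$, $0\le i,j,l\le p-1$ and $l\ge 1$ of the weighted monomial $(p-l)x_ix_jx_l$, and completing to the $S_3$-symmetric set $T=\{(i,j,l):i+j+l=p,\;0\le i,j,l\le p-1\}$ by adjoining the $l=0$ contribution, one finds (since $x_ix_jx_l$ is symmetric in the indices) that the weight $(p-l)$ averages over each $S_3$-orbit of $T$ to $2p/3$; hence
$$d_2\eta_p \;=\; p\bigl(\tfrac{2}{3}g_p-x_0 h_p\bigr)\, ,$$
where $g_p=[z^p]X(z)^3$, $h_p=[z^p]X(z)^2$, and $X(z)=\sum_{i=0}^{p-1}x_iz^i$. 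For $p>3$ prime, the equation $3a=p$ has no solution, so no ``all equal'' triple $(a,a,a)$ contributes to $g_p$; every monomial in $g_p$ then carries multinomial coefficient $3$ or $6$, and $\tfrac{2}{3}g_p\in\mathbb{Z}[x_0,\ldots,x_{p-1}]$. Consequently $d_2\eta_p$ is divisible by $p$ in the integral chain complex, so $\eta_p$ is a cycle mod $p$ but not a cycle over $\mathbb{Z}$.

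Next I would verify two structural facts about bidegree $q^{2p+6}$. First, the integer cycles at $t$-degree $2p+1$ vanish: a general chain is $\sum c_k\xi_kx_{p-k}$, and expanding $d_2(\sum c_k\xi_kx_{p-k})$ via ordered triples and matching the coefficient of $x_ax_bx_c$ for each unordered triple with $a+b+c=p$ gives, upon setting $d_a:=c_{p-a}$, the relations $d_a+d_b+d_c=0$ whenever $a+b+c=p$ with $a,b,c\ge 1$, together with $d_a+d_{p-a}=0$ from the triples containing a zero. This is Cauchy's equation $d_{a+b}=d_a+d_b$ plus antisymmetry $d_a=-d_{p-a}$; by induction $d_a=ad_1$, and antisymmetry then forces $pd_1=0$, so over $\mathbb{Z}$ we obtain $d_1=0$ and all $c_k=0$. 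Second, there are no boundaries in bidegree $q^{2p+6}t^{2p+1}$: a short bigrading enumeration in $(q^{2p+6},t^{2p+2})$ rules out chains with two or four $\xi$'s, leaving only the commutative products $x_ax_b$ with $a+b=p+1$, $a,b\in\{0,\ldots,p-1\}$, on which $d_2$ vanishes.

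Combining, one has $H_{q^{2p+6},t^{2p+1}}(\Kh_{alg}(p,\infty);\mathbb{Z})=0$ while $[\eta_p]\neq 0$ in $H_{q^{2p+6},t^{2p+1}}(\Kh_{alg}(p,\infty);\mathbb{Z}/p)$. The universal coefficient theorem, applied bigrading by bigrading, reads
$$H_{q^{2p+6},t^{2p+1}}(\mathbb{Z}/p)\;\cong\;\bigl(H_{q^{2p+6},t^{2p+1}}(\mathbb{Z})\otimes\mathbb{Z}/p\bigr)\oplus\operatorname{Tor}_1^{\mathbb{Z}}\bigl(H_{q^{2p+6},t^{2p}}(\mathbb{Z}),\mathbb{Z}/p\bigr),$$
and the vanishing of the first summand forces the Tor summand to be nonzero, i.e.\ $H_{q^{2p+6},t^{2p}}(\Kh_{alg}(p,\infty);\mathbb{Z})$ has nontrivial $\mathbb{Z}/p$-torsion, which is the statement of the theorem. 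The main technical obstacle is the symmetrization identity producing the factor $\tfrac{2}{3}$; the hypothesis $p>3$ is truly essential, because at $p=3$ the triple $(1,1,1)$ appears in $g_p$ with coefficient $1$ and destroys the integrality of $\tfrac{2}{3}g_p$, so the argument collapses in exactly the case where the conclusion fails.
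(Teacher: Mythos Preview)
Your proof is correct and follows essentially the same strategy as the paper's: exhibit a chain in bidegree $q^{2p+6}t^{2p+1}$ whose $d_2$-image is divisible by $p$, and deduce $p$-torsion one homological step down. Your element $\eta_p=\sum_k k\,\xi_k x_{p-k}=\sum_i (p-i)\,x_i\xi_{p-i}$ is, modulo $p$, simply $-\tfrac{1}{3}$ times the paper's element $m=\sum_i(3i-p)\,x_i\xi_{p-i}$, and your $S_3$-symmetrization producing the factor $\tfrac{2p}{3}$ is the same mechanism as the paper's observation that the fully symmetric sum $\sum_{i+j+k=p}(2i-j-k)x_ix_jx_k$ vanishes.

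Where your argument goes beyond the paper's is in rigor: you explicitly verify via the Cauchy-equation argument that $\ker d_2=0$ over $\mathbb{Z}$ in this bidegree, and via a degree count that there are no boundaries there. The paper simply asserts that ``the dimension of the kernel jumps by $1$'' upon reduction mod $p$, but that assertion tacitly requires knowing that $\bar m$ is not the reduction of an integral cycle --- precisely what your kernel computation supplies. Your invocation of the universal coefficient theorem then makes the passage from $H_{t^{2p+1}}(\mathbb{Z}/p)\neq 0$ with $H_{t^{2p+1}}(\mathbb{Z})=0$ to $p$-torsion in $H_{t^{2p}}(\mathbb{Z})$ completely explicit.
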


\begin{proof}
Consider the element $$m=\sum_{i=1}^{p-1}(3i-p)x_{i}\xi_{p-i}=\sum_{i+j=p,1<i<p-1}(2i-j)x_{i}\xi_{j}.$$
Then $$d_2(m)=\sum_{\substack{i+j+k=p,\\ 1<i<p-1}}(2i-j-k)x_ix_jx_k=\sum_{i+j+k=p}(2i-j-k)x_ix_jx_k-2px_px_0^2-px_0\sum_{j+k=p}x_jx_k.$$
Since the first sum vanishes, we have
$$d_2(m)=-2px_px_0^2-px_0\sum_{j+k=p}x_jx_k\cong 0 ~(\mbox{mod}~ p).$$
Since $\deg m=q^{2p+6}t^{2p+1}$, the dimension of the kernel of 
$d_2:C(2p+6,2p+1)\rightarrow C(2p+6,2p)$ jumps by 1 when we reduce it modulo $p$.
Therefore its cokernel has $\mathbb{Z}_{p}$-torsion.
\end{proof}

We have verified the presence of this torsion in Khovanov homology for \(p=5,7\). %({\bf Jake: fill in the blank}). 

\section{Algebraic structure}
\label{sec:algebra}

We now consider the rational homology of the chain complex appearing in Conjecture~\ref{conj:main}. We will work with rational coefficients for the remainder of the paper. 

\subsection{Koszul model}

Conjecture~\ref{conj:main} tells us to consider the polynomial ring in  even variables $x_0,x_1,\ldots, x_{n-1}$ 
and an equal number of odd variables $\xi_0,\xi_1,\ldots, \xi_{n-1}$,
bigraded as
$$\deg(x_k)=q^{2k+2}t^{2k},\quad \deg(\xi_i)=q^{2i+4}t^{2i+1}.$$
The differential $d_2$ is given by the equation
\begin{equation}
%\label{d2}
d_2(\xi_m)=\sum_{k=0}^{m}x_{k}x_{m-k}.
\end{equation}
One can check that this differential preserves the $q$-grading and decreases the $t$-grading by 1.

\begin{remark}
Consider the generating functions $$x(z)=\sum_{k=0}^{n-1}x_{k}z^k,\quad \xi(z)=\sum_{k=0}^{n-1}\xi_{k}z^k.$$
If we work over the ring \(\ZZ[z]/(z^n)\), we can express the differential as
 $$d_2(\xi(z))=x(z)^{2}.$$
\end{remark}
%({\bf Jake: this only works if we mod out by \(z^n\).})

At  the  bottom level  of the Koszul complex,  we get the quotient of the polynomial ring $\mathbb{C}[x_i]$ by the ideal $I_n$ generated by the coefficients of the series $x(z)^2$.
It was remarked by  Feigin and Stoyanovsky  \cite{fs} that in the limit $n\to\infty$, this ideal  corresponds to the integrable representation of 
$\widehat{\Sl(2)}$ at level 1, and the equation $x(t)^2$ is an example of the Lepowsky-Primc equations \cite{lp}.  
The bigraded Hilbert series of $\mathbb{C}[x_0,x_1,\ldots]/I_{\infty}$ was computed in \cite{fs} by two different methods, and the equality of the answers corresponds to the following generalization of the 
Rogers-Ramanujan identity (cf. \cite{andrews}):
\begin{equation}
\label{rr}
H_{q,t}(\mathbb{C}[x_0,x_1,\ldots]/I_{\infty})=\sum_{p=0}^{\infty}\frac{q^{2p^2}t^{2p(p-1)}}{(1-q^2t^2)(1-q^4t^4)\ldots (1-q^{2p}t^{2p})}=
\end{equation}
$$\frac{1}{ \prod_{k=1}^{\infty}(1-q^{2k}t^{2k-2})}\sum_{n=0}^{\infty}(-1)^{n}\prod_{k=1}^{n}\frac{(1 - q^{2 k} t^{2 k - 2})}{(1 - q^{2 k} t^{2 k})}(q^{5 n^2 + n} t^{5 n^2 - 3 n} - 
   q^{(n+1)(5n+4)} t^{5n^2+5n}).$$
A similar problem was independently studied by  Brushek, Mourtada and Schepers in \cite{schepers}, where it appeared in the computation of the Hilbert-Poincar\'e series of the arc space of double point.

Most of the algebraic constructions below can be considered as a straightforward generalization of these results  
to the full Koszul homology. In particular, we conjecture the identity (\ref{krr}) that degenerates to (\ref{rr}) at $a=0$.

%It is important to note that if we were to replace \(d_2(\xi_k)\) with \(d_2'(\xi_k) = \sum \alpha_{ik} x_ix_{k-i}\), then for a generic choice of  $\alpha_{ik}$ we would  have different ranks of homology groups. In example \ref{n6} we show that the ranks of the homology cannot be obtained from simple dimension count. As a result, 
%for the random choice of $\alpha_{ik}$ we would have less homology, and, for example, Theorem \ref{e3} about the convergence of the Lee's spectral sequence would fail. 

\subsection{Examples}
\label{subsec:algex}
\begin{example}
\label{ex2}
Let us compute $\Kh_{alg}(2,\infty)$. We have two even generators $x_0, x_1$ and two odd generators $\xi_0, \xi_1$.
Since $d_2(\xi_0)=x_0^2,d_2(\xi_1)=2x_0x_1$, we have a non-trivial homology generator $\mu_0=2x_1\xi_0-x_0\xi_1$. The homology is spanned by the elements
$$p(x_1)+\alpha x_0+r(x_1)\mu_0,$$
where \(p\) and \(r\) are polynomials in \(x_1\) and \(\alpha \in \QQ\). 
(Remark that $x_0\mu_0=d_2(\xi_0\xi_1)$).
The Poincar\'e series has the form 
$$P_{2,\infty}(q,t)=\frac{1+q^8t^3}{ 1-q^4t^2}+q^2.$$
\end{example}

\begin{example}
To compute  $\Kh_{alg}(3,\infty)$, we add the variables $x_2$ and $\xi_2$ with the differential
$d_2(\xi_2)=2x_0x_2+x_1^2.$ 
\end{example}

\begin{lemma}
\label{mu12}
Let
$$\mu_0=2x_1\xi_0-x_0\xi_1,\quad \mu_1=2x_0\xi_2-x_1\xi_1-4x_2\xi_0.$$
Then
\begin{equation}
\label{rels3}
d_2(\mu_0)=d_2(\mu_1)=0,\quad x_0\mu_0=d_2(\xi_0\xi_1),\quad x_0\mu_1-x_1\mu_0=2d_2(\xi_0\xi_2),
\end{equation}
$$2x_2\mu_0+x_1\mu_1=d_2(\xi_1\xi_2),\quad \mu_1\mu_2=-2d_2(\xi_0\xi_1\xi_2).$$
\end{lemma}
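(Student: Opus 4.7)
The plan is to verify every assertion by direct computation, using only three facts: $d_2(x_i)=0$, $d_2(\xi_k)=\sum_{i=0}^k x_i x_{k-i}$, and $d_2$ is a graded derivation, so $d_2(ab)=d_2(a)b+(-1)^{|a|}a\,d_2(b)$ with $|\xi_i|=1$ and $|x_i|=0$, together with the supercommutation rules $\xi_i\xi_j=-\xi_j\xi_i$ (in particular $\xi_i^2=0$) and the fact that the $x_i$'s are central. Each assertion then reduces to a polynomial identity in the $x_i$'s obtained by matching coefficients of the monomials $\xi_j$, $\xi_j\xi_k$, or $\xi_0\xi_1\xi_2$ on the two sides.

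For the two cycle conditions I would simply plug in: $d_2(\mu_0)=2x_1\cdot x_0^2-x_0\cdot 2x_0x_1=0$, and $d_2(\mu_1)=2x_0(2x_0x_2+x_1^2)-x_1(2x_0x_1)-4x_2\cdot x_0^2=0$. For the boundary identities, I would apply Leibniz to each $\xi_i\xi_j$; for instance,
\[
d_2(\xi_0\xi_1)=d_2(\xi_0)\xi_1-\xi_0 d_2(\xi_1)=x_0^2\xi_1-2x_0x_1\xi_0,
\]
which matches $\pm x_0\mu_0$. The identities for $x_0\mu_1-x_1\mu_0$ and $2x_2\mu_0+x_1\mu_1$ are obtained the same way: expand $d_2(\xi_0\xi_2)$ and $d_2(\xi_1\xi_2)$ with the Leibniz rule, and the coefficients $2$ and $-4$ appearing in the definition of $\mu_1$ are precisely what is needed for the $x_0^2\xi_2$, $x_1^2\xi_0$, and $x_0x_2\xi_0$ terms to fit. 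For the last identity I interpret $\mu_1\mu_2$ as $\mu_0\mu_1$ (the symbol $\mu_2$ has not been defined), expand
\[
\mu_0\mu_1 = (2x_1\xi_0-x_0\xi_1)(2x_0\xi_2-x_1\xi_1-4x_2\xi_0),
\]
discard the $\xi_0^2$ and $\xi_1^2$ terms, and compare with the expansion
\[
d_2(\xi_0\xi_1\xi_2)=x_0^2\,\xi_1\xi_2-2x_0x_1\,\xi_0\xi_2+(2x_0x_2+x_1^2)\,\xi_0\xi_1
\]
term by term.

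The computation is elementary; the main obstacle is just bookkeeping of Koszul signs. In particular, moving $d_2$ past an odd generator introduces a minus sign, and the overall sign on the right-hand side of each boundary identity depends on the chosen convention for sliding even coefficients past odd generators. To avoid confusion, I would fix a monomial basis $\{\xi_0^{a_0}\xi_1^{a_1}\xi_2^{a_2}\}$ with $a_i\in\{0,1\}$ and write every expression uniquely in this basis before comparing, at which point each identity becomes a mechanical check of polynomial coefficients in $\QQ[x_0,x_1,x_2]$.
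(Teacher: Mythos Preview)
Your approach is correct and is exactly what the paper intends: the lemma is stated without proof, so the direct Leibniz-rule verification you outline is the whole argument, and you also correctly spot that ``$\mu_1\mu_2$'' must mean $\mu_0\mu_1$. One small correction: the sign discrepancy you flag in $x_0\mu_0=d_2(\xi_0\xi_1)$ is simply a typo in the paper (with the standard Koszul sign rule one gets $x_0\mu_0=-d_2(\xi_0\xi_1)$, while the other three boundary identities hold as written); it is \emph{not} a matter of ``sliding even coefficients past odd generators,'' since even elements are central and introduce no sign.
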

One can check that the homology is generated by $\mu_0$ and $\mu_1$ and (\ref{rels3}) is the complete set of relations
between them (this is a special case of Conjectures \ref{gens} and \ref{rels} below).

\begin{lemma}
The homology of $d_2$ is spanned by the elements of the form
$$p_1(x_2)+x_0p_2(x_2)+x_1p_2(x_2)+\alpha \mu_0+\mu_1(q_1(x_2)+x_0q_2(x_2)+x_1q_2(x_2)).$$
\end{lemma}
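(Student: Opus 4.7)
The plan is to analyze the complex by $\xi$-degree and apply the explicit relations from Lemma~\ref{mu12}. At $\xi$-degree zero the chain space is $R=\QQ[x_0,x_1,x_2]$ and the image of $d_2$ is the ideal $I=(x_0^2,\,2x_0x_1,\,2x_0x_2+x_1^2)$. Since $x_0^2\in I$, $x_0x_1\in I$, and $x_1^2\equiv -2x_0x_2\pmod I$, every monomial $x_0^ax_1^bx_2^c$ either vanishes in $R/I$ or reduces to one of $x_2^c$, $x_0x_2^c$, $x_1x_2^c$; this accounts for the $p_1(x_2)+x_0p_2(x_2)+x_1p_2(x_2)$ piece of the normal form (where the two $p_2$-factors are independent polynomials in $x_2$).

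At $\xi$-degree one I would first verify that $\mu_0$ and $\mu_1$ generate the cycles as an $R$-module. Solving the syzygy equation $f_0x_0^2+2f_1x_0x_1+f_2(2x_0x_2+x_1^2)=0$ in the UFD $R$ by successive divisibility (setting $x_0=0$ forces $x_0\mid f_2$, then $x_1\mid(f_0+2x_2g_2)$) gives the unique presentation $(f_0,f_1,f_2)\cdot\xi = \tfrac{h}{2}\mu_0+\tfrac{g}{2}\mu_1$, so $Z_1=R\mu_0\oplus R\mu_1$. The boundary submodule is generated by the images $d_2(\xi_i\xi_j)$ for $i<j$, which by Lemma~\ref{mu12} are $-x_0\mu_0$, $\tfrac{1}{2}(x_0\mu_1-x_1\mu_0)$, and $2x_2\mu_0+x_1\mu_1$; hence in homology
\[
x_0\mu_0\equiv 0,\qquad x_1\mu_0\equiv x_0\mu_1,\qquad x_2\mu_0\equiv -\tfrac{1}{2}x_1\mu_1.
\]
Iterating, any $x_0$-multiple of $\mu_0$ vanishes and every $x_1^ax_2^b\mu_0$ with $a+b\geq 1$ migrates into a $\mu_1$-term, leaving only the scalar $\alpha\mu_0$. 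On the $\mu_1$-side these three relations chain together to yield $x_0^2\mu_1=x_0x_1\mu_1=0$ and $(x_1^2+2x_0x_2)\mu_1=0$, identifying the $\mu_1$-piece with $R/I\cdot\mu_1$, which by the first paragraph is spanned exactly by $\mu_1(q_1(x_2)+x_0q_2(x_2)+x_1q_2(x_2))$.

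The main obstacle is showing that $\xi$-degrees two and three contribute nothing to homology. At $\xi^2$, expanding $d_2(f_{01}\xi_0\xi_1+f_{02}\xi_0\xi_2+f_{12}\xi_1\xi_2)=0$ and matching coefficients of $\xi_0,\xi_1,\xi_2$ gives three equations in $R$; using that $R$ is a UFD and dividing by $x_0$ and $x_1$ successively forces $f_{01}=(2x_0x_2+x_1^2)h$, $f_{02}=-2x_0x_1h$, $f_{12}=x_0^2h$ for some $h\in R$, whence the cycle equals $h\cdot d_2(\xi_0\xi_1\xi_2)=d_2(h\,\xi_0\xi_1\xi_2)$ and is a boundary. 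At $\xi^3$, a cycle $f\,\xi_0\xi_1\xi_2$ requires $f\cdot d_2(\xi_0\xi_1\xi_2)=0$ in the free $R$-module spanned by the $\xi_i\xi_j$; comparing the $\xi_1\xi_2$-coefficient yields $fx_0^2=0$, and thus $f=0$ since $R$ is a domain. These two computations finish the proof, simultaneously verifying that the relation $\mu_0\mu_1=-2d_2(\xi_0\xi_1\xi_2)$ from Lemma~\ref{mu12} (with the evident typo $\mu_1\mu_2\rightsquigarrow\mu_0\mu_1$) is the only potentially new two-$\xi$ class and that it too is killed.
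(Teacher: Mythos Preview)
Your proof is correct and in fact considerably more complete than the paper's. The paper's argument is a two-line sketch: it lists the monomials $x_0^2,\ x_0x_1,\ x_1^2,\ x_0\mu_0,\ x_1\mu_0,\ x_2\mu_0,\ \mu_0\mu_1$ that can be eliminated using the relations of Lemma~\ref{mu12}, and asserts that what remains is independent. Implicit in this is that the homology is generated as an algebra by $x_0,x_1,x_2,\mu_0,\mu_1$ --- a fact the paper states just before the lemma (``One can check that the homology is generated by $\mu_0$ and $\mu_1$'') but does not verify.

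You supply exactly this verification. Your level-by-level analysis of the Koszul complex --- computing $R/I$ at $\xi$-degree~$0$, solving the syzygy equations in the UFD $R$ to show $Z_1=R\mu_0\oplus R\mu_1$, and then proving $H_2=H_3=0$ directly --- is what the paper's ``one can check'' is pointing to. The UFD divisibility arguments at $\xi$-degrees~$1$ and~$2$ are clean and correct (and the observation that the unique $\xi^2$-cycle is $h\cdot d_2(\xi_0\xi_1\xi_2)$ is the right way to see $H_2=0$). Your identification of the $\mu_1$-piece with $(R/I)\mu_1$ is also valid: the three boundary relations combine to give $I\mu_1\subset B_1$, which together with the migration of $x_i\mu_0$ into $\mu_1$-terms yields the claimed spanning set. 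You also correctly flag both typos in the paper (the repeated $p_2$ and $\mu_1\mu_2$ for $\mu_0\mu_1$).
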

\medskip

\begin{proof}
Modulo the image of $d_2$, we can eliminate all monomials containing $$x_0^2, x_0x_1, x_1^2, x_0\mu_0, x_1\mu_0, x_2\mu_0, \mu_1\mu_2.$$ 
After this modification the remaining monomials will be linearly independent in the homology.
\end{proof}

\begin{corollary}
$$P_{3,\infty}(q,t)=\frac{(1+q^{10}t^5)(1+q^2+q^4t^2)}{ 1-q^6t^4}+q^8t^3.$$
\end{corollary}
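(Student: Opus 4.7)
The plan is to extract the Poincar\'e series directly from the basis description provided by the preceding lemma once the bidegrees of the generating monomials are recorded. First I would compute
$$\deg \mu_0 = \deg(x_1 \xi_0) = q^4t^2 \cdot q^4 t = q^8 t^3, \qquad \deg \mu_1 = \deg(x_0 \xi_2) = q^2 \cdot q^8 t^5 = q^{10} t^5,$$
using the formulas $\mu_0 = 2x_1\xi_0 - x_0\xi_1$ and $\mu_1 = 2x_0\xi_2 - x_1\xi_1 - 4x_2\xi_0$ together with the degree assignments $\deg(x_k) = q^{2k+2}t^{2k}$ and $\deg(\xi_k) = q^{2k+4}t^{2k+1}$. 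In particular $\mu_0$ contributes the isolated term $q^8t^3$ visible in the claimed formula.

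Next I would reorganize the spanning set from the preceding lemma so that it breaks into the disjoint pieces
$$\QQ[x_2],\quad x_0 \cdot \QQ[x_2],\quad x_1 \cdot \QQ[x_2], \quad \QQ\cdot\mu_0, \quad \mu_1 \cdot \QQ[x_2],\quad x_0\mu_1 \cdot \QQ[x_2],\quad x_1\mu_1 \cdot \QQ[x_2],$$
whose bidegrees are $1$, $q^2$, $q^4t^2$, $q^8t^3$, $q^{10}t^5$, $q^{12}t^5$, $q^{14}t^7$ (respectively) times the free polynomial factor $1/(1-q^6t^4)$ coming from powers of $x_2$ (whose bidegree is $q^6t^4$). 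Adding these contributions and factoring yields
$$\frac{1+q^2+q^4t^2 + q^{10}t^5 + q^{12}t^5 + q^{14}t^7}{1-q^6t^4} + q^8t^3 = \frac{(1+q^{10}t^5)(1+q^2+q^4t^2)}{1-q^6t^4} + q^8t^3,$$
which is the claimed formula.

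The only substantive content beyond bookkeeping is to ensure that the spanning set from the preceding lemma is in fact a basis, i.e.\ that no further linear relations appear. This is where I would be most careful: one needs to check that after using the relations in (\ref{rels3}) to eliminate the monomials containing $x_0^2$, $x_0x_1$, $x_1^2$, $x_0\mu_0$, $x_1\mu_0$, $x_2\mu_0$, and $\mu_0\mu_1$ (each of which lies in the image of $d_2$), the remaining monomials are indeed linearly independent in $\Kh_{alg}(3,\infty)$. For this one can argue degree-by-degree: the proof of the lemma identifies a complement to $\mathrm{image}(d_2) \cap \ker(d_2)$ inside $\ker(d_2)$, and once one verifies that no combination of the surviving monomials lies in $\mathrm{image}(d_2)$ in any given bidegree, the decomposition above gives a genuine $q,t$-graded basis, and the Poincar\'e series is simply the sum computed in the previous paragraph.
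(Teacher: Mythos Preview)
Your proposal is correct and takes essentially the same approach as the paper: the corollary is an immediate consequence of the preceding lemma, obtained by reading off the bidegrees of the basis elements and summing the resulting geometric series. Your extra care about verifying linear independence is appropriate, and in fact the paper's proof of the lemma simply asserts (without further argument) that the surviving monomials are linearly independent.
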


\subsection{Generators and relations}
\label{Sec:mus}

Let us describe the generators in the homology generalizing $\mu_0$ and $\mu_1$ constructed in the previous section.

\begin{lemma}
Consider the set of indeterminates $\varepsilon_{a,b}$, where \(a+b=r\) is fixed, and \(a,b\geq 0\). 
The system of linear equations 
\begin{equation}
\label{abc}
\varepsilon_{a,b+c}+\varepsilon_{b,a+c}+\varepsilon_{c,a+b}=0
\end{equation}
has a nontrivial solution.
\end{lemma}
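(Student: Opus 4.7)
The plan is simply to exhibit an explicit nontrivial solution. The pattern of coefficients in the examples $\mu_0 = 2x_1\xi_0 - x_0\xi_1$ and $\mu_1 = 2x_0\xi_2 - x_1\xi_1 - 4x_2\xi_0$ suggests that the coefficient of $x_a\xi_b$ should be a linear function of $a$ (the overall sign flips between $\mu_0$ and $\mu_1$, but this is irrelevant up to a scalar). This motivates the ansatz $\varepsilon_{a,b} = \alpha a + \beta b$.

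Substituting into (\ref{abc}) one obtains
$$\varepsilon_{a,b+c}+\varepsilon_{b,a+c}+\varepsilon_{c,a+b} = (\alpha+2\beta)(a+b+c) = (\alpha+2\beta)\,r,$$
so any pair $(\alpha,\beta)$ with $\alpha + 2\beta = 0$ produces a solution. The choice $(\alpha,\beta) = (2,-1)$, i.e.\ $\varepsilon_{a,b} = 2a - b$, gives a nontrivial solution whenever $r \geq 1$, since $\varepsilon_{r,0} = 2r \neq 0$. (For $r = 0$ the single equation collapses to $3\varepsilon_{0,0} = 0$, so the statement is implicitly about $r \geq 1$.)

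I do not expect any real obstacle; the argument is a one-line verification, and the only ``difficulty'' is discovering the ansatz, which is guided directly by the previous examples. The conceptual payoff is what comes next: the element $\mu_r := \sum_{a+b=r}(2a-b)\,x_a\xi_b$ is automatically a $d_2$-cycle, because $d_2(\mu_r) = \sum_{a+b+c=r}(2a - b - c)\,x_a x_b x_c$, and symmetrizing in the unordered triple $\{a,b,c\}$ replaces the coefficient of $x_a x_b x_c$ by exactly the left-hand side of (\ref{abc}). Thus the lemma produces the homology classes that generalize $\mu_0$ and $\mu_1$ to arbitrary $r$.
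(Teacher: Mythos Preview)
Your proof is correct and takes essentially the same approach as the paper: both exhibit the explicit solution $\varepsilon_{a,b}=2a-b$ and verify equation~(\ref{abc}) by direct substitution. Your additional discussion of the general linear ansatz $\alpha a+\beta b$ and the consequence for $d_2(\mu_r)$ is a nice framing, but the core argument is identical to the paper's one-line check.
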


Remark that the equations in this system are labelled by triples of integers while the variables are 
labelled by pairs. Therefore the number of equations is asymptotically quadratic in \(r\), while the number of variables is asymptotically linear,
and the system is over-determined.

\begin{proof}
Let $\varepsilon_{a,b}=2a-b.$ Then
$$\varepsilon_{a,b+c}+\varepsilon_{b,a+c}+\varepsilon_{c,a+b}=(2a-b-c)+(2b-a-c)+(2c-a-b)=0.$$
\end{proof}

\begin{example}
\label{n6}
Consider the case $r=6$. The system has 7 equations in 7 variables:
$$2\varepsilon_{0,6}+\varepsilon_{6,0}=0,\quad \varepsilon_{0,6}+\varepsilon_{1,5}+\varepsilon_{5,1}=0,$$
$$\varepsilon_{0,6}+\varepsilon_{2,4}+\varepsilon_{4,2}=0,\quad \varepsilon_{0,6}+2\varepsilon_{3,3}=0,$$
$$2\varepsilon_{1,5}+\varepsilon_{4,2}=0,\quad \varepsilon_{1,5}+\varepsilon_{2,4}+\varepsilon_{3,3}=0, \quad \varepsilon_{2,4}=0.$$
Surprisingly, it is has rank 6 and its solution is $$(\varepsilon_{0,6},\varepsilon_{1,5},\varepsilon_{2,4},\varepsilon_{3,3},\varepsilon_{4,2},\varepsilon_{5,1},\varepsilon_{6,0})=(-6,-3,0,3,6,9,12).$$

\end{example}

We are ready to present some non-trivial classes in stable homology.

\begin{lemma}
Let $$\mu_s=\sum_{k=0}^{s+1}\varepsilon_{k,s+1-k}x_{k}\xi_{s+1-k},$$
where the coefficients $\varepsilon_{a,b}$ are defined by the equation (\ref{abc}).
Then $d_2(\mu_s)=0$.
\end{lemma}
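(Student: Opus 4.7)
The plan is to verify $d_2(\mu_s)=0$ by a direct computation, reducing everything to a symmetric sum of cubic monomials in the $x_i$ and then invoking the defining relation (\ref{abc}) via a symmetrization trick.

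First, I would apply the Leibniz rule together with $d_2(x_k)=0$ and $d_2(\xi_m)=\sum_{i=0}^{m} x_i x_{m-i}$ to obtain
\[
d_2(\mu_s)=\sum_{k=0}^{s+1}\varepsilon_{k,s+1-k}\,x_k\!\!\sum_{i=0}^{s+1-k} x_i x_{s+1-k-i}.
\]
Reindexing by $a=k$, $b=i$, $c=s+1-k-i$, this becomes
\[
d_2(\mu_s)=\sum_{\substack{a+b+c=s+1\\ a,b,c\ge 0}} \varepsilon_{a,\,b+c}\, x_a x_b x_c,
\]
where I used $\varepsilon_{k,s+1-k}=\varepsilon_{a,b+c}$ since $k+(s+1-k)=(a)+(b+c)=s+1$, matching the value of $r$ used in defining the $\varepsilon$'s for $\mu_s$.

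Next I would symmetrize. Because the $x_i$ are even, the monomial $x_a x_b x_c$ is invariant under any permutation of $(a,b,c)$, so relabelling the dummy indices in the displayed sum yields the same value with the coefficient $\varepsilon_{a,b+c}$ replaced by $\varepsilon_{b,a+c}$ or by $\varepsilon_{c,a+b}$. Averaging the three expressions gives
\[
3\,d_2(\mu_s)=\sum_{a+b+c=s+1}\bigl(\varepsilon_{a,b+c}+\varepsilon_{b,a+c}+\varepsilon_{c,a+b}\bigr) x_a x_b x_c,
\]
and the parenthesized expression vanishes for every ordered triple with $a+b+c=s+1$ by the defining equation (\ref{abc}) of the $\varepsilon_{a,b}$'s. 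Hence $d_2(\mu_s)=0$.

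There is no real obstacle; the only thing to be careful about is lining up the constant $r$: in the definition of $\mu_s$ we sum over pairs with $k+(s+1-k)=s+1$, and after expansion we obtain triples with $a+b+c=s+1$, so the relation (\ref{abc}) is being applied with exactly the right value of $r=s+1$. Once that bookkeeping is in place, the symmetrization argument closes the proof immediately.
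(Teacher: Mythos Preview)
Your proof is correct and follows essentially the same approach as the paper: expand $d_2(\mu_s)$ into a sum of cubic monomials indexed by triples $(a,b,c)$ with $a+b+c=s+1$, symmetrize over the three indices using the commutativity of the even variables $x_i$, and invoke relation~(\ref{abc}). Your write-up is in fact cleaner than the paper's, which contains some index slips; the only cosmetic difference is that you average (dividing by $3$, which is fine over $\mathbb{Q}$) whereas the paper passes directly to the symmetrized coefficient.
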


\begin{remark}
Since the coefficients of $d_2$  are quadratic in the  $x$-variables, the elements $\mu_s$
do not belong to the image of $d_2$.
\end{remark}

\begin{proof}
We compute 
\begin{align*}
d_2(\mu_s) & =\sum_{k=1}^{s+2}\varepsilon_{k,s+3-k}x_{k}d_2(\xi_{s+3-k}) = \sum_{k=1}^{s+2}\varepsilon_{k,s+3-k}x_{k}\sum_{j=1}^{k-1}x_{j}x_{s+3-k-j} \\ & =\sum_{i+j+k=s+3}(\varepsilon_{i,j+k}+\varepsilon_{j,i+k}+\varepsilon_{k,i+j}) = 0. 
\end{align*}
\end{proof}

\begin{remark}
\label{rem:generic}
 Example \ref{n6} shows that for a generic choice of the coefficients of $d_2$ the corresponding $7\times 7$ 
matrix would be non-degenerate, and the $d_2$ homology would have {\em smaller} dimension. In particular, for a generic choice of the coefficients $d_2$ would have no homology in bidegree $q^{18}t^{13}$. Indeed, the only monomials in this bidegree are $x_{a}\xi_{b}$ with $a+b=6$.
The differential maps the space they span to the space spanned by monomials of the form
  $x_{a}x_{b}x_{c}$ with $a+b+c=6$ according to the matrix from Example \ref{n6}.
\end{remark}

\begin{conjecture}
\label{gens}
The homology of $d_2$ is generated as an algebra by $\mu_s$ and $x_i$. 
\end{conjecture}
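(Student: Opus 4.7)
The plan is to proceed by induction on the $\xi$-degree $p$ of a cycle, showing that every $p$-cycle in $K_p = \QQ[x_0,\ldots,x_{n-1}] \otimes \wedge^p\langle \xi_0,\ldots,\xi_{n-1}\rangle$ is, modulo $d_2$-boundaries, a polynomial combination of the generators $x_i$ and $\mu_s$. Since $d_2$ lowers $\xi$-degree by exactly one, the complex $K$ is just the Koszul complex of the sequence $g_k := \sum_{i+j=k} x_i x_j$ for $0 \leq k \leq n-1$ in $R = \QQ[x_0,\ldots,x_{n-1}]$, and the case $p = 0$ is tautological: $H_0(K) = R/I$ (with $I = (g_0,\ldots,g_{n-1})$) lies in the subalgebra generated by the $x_i$ by construction.

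For $p = 1$, a cycle is exactly an $R$-linear syzygy $\sum f_k \xi_k$ among the $g_k$'s, and the Koszul boundaries $d_2(\xi_j\xi_k) = g_k\xi_j - g_j\xi_k$ supply the trivial syzygies. The task is to show that modulo these, the module of syzygies is generated over $R/I$ by the explicit $\mu_s$ with coefficients $\varepsilon_{a,b} = 2a - b$. Graded by internal $x$-degree $r$, this reduces to the finite linear-algebra statement illustrated in Example \ref{n6}: the overdetermined system (\ref{abc}) has a one-dimensional kernel spanned by $\varepsilon_{a,b} = 2a - b$ for every $r$. A direct combinatorial argument, or recognition of (\ref{abc}) as extracting the ``symmetric trace part'' of a linear map on index triples, should suffice.

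For $p \geq 2$ the inductive step is the core difficulty. Given a cycle $\eta \in K_p$, one takes its leading $\xi$-monomial under a chosen order, uses the cycle condition to constrain its $R$-coefficient, and attempts to write $\eta = \mu_{s_1}\cdots \mu_{s_p}\cdot h + (\text{lower-order}) + d_2(\eta')$, iterating on the lower-order remainder. The natural framework is to extend the Feigin--Stoyanovsky analysis of $R/I$ in \cite{fs}, who treat the $g_k$'s as the Lepowsky--Primc relations of the level-$1$ vacuum module for $\widehat{\Sl(2)}$; our $\mu_s$ should correspond to certain ``first-order'' current modes, and their PBW-type monomial basis of $R/I$ should lift to a compatible basis for higher Koszul homology in monomials of $x_i$ and $\mu_s$.

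The main obstacle is that $\{g_k\}$ is extremely non-regular --- its zero scheme has dimension $n-1$ rather than $0$ --- so no depth-theoretic collapse or clean spectral sequence is available, and higher Koszul homology admits no direct cohomological description. Moreover, the relations on the $\mu_s$ predicted by Conjecture \ref{conj:presentation}, in particular $\ddot{x}(z)\mu(z) - \dot{x}(z)\dot{\mu}(z) = 0$, mix $x$- and $\mu$-derivatives and obstruct any simple monomial normal form. A plausible workaround is to prove Conjecture \ref{conj:presentation} and the Hilbert-series formula (\ref{pnintro}) separately --- via the recursion of Conjecture \ref{shuc} on the Khovanov side and the matrix-factorization description on the algebraic side --- and then compare: once one knows the algebra defined by those generators and relations has the correct Poincar\'e series, the tautological surjection onto $\Kh_{alg}$ must be an isomorphism, and the generation statement follows at no extra cost.
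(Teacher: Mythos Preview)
The statement you are attempting to prove is labelled a \emph{conjecture} in the paper and is not proved there. The paper offers only (i) computer verification for $n\le 7$ via {\tt Singular}, and (ii) a partial result for $n=\infty$: the corollary to Feigin's theorem shows that $H_1(d_2)$ is generated by the $\mu_i$ and $x_i$, using the identification of the graded pieces of $R/J$ with quotients of Virasoro Verma modules. There is therefore no ``paper's own proof'' to compare against; the question is whether your outline constitutes one.

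It does not, and the first genuine gap appears already at $p=1$. Your claim that the case $p=1$ ``reduces to the finite linear-algebra statement illustrated in Example~\ref{n6}'' conflates two different assertions. The system~(\ref{abc}) concerns only cycles of the special form $\sum_k \varepsilon_k\, x_k\,\xi_{s+1-k}$, i.e.\ those whose $R$-coefficients are \emph{linear} in the $x$'s; showing that its solution space is one-dimensional establishes that $\mu_s$ is the unique such cycle up to scalar in each bidegree. It says nothing about a general syzygy $\sum_k f_k\,\xi_k$ with $f_k\in R$ of arbitrary polynomial degree. Proving that every such syzygy is, modulo Koszul boundaries, an $R/I$-combination of the $\mu_s$ is exactly the $p=1$ case of the conjecture, and the paper obtains it (only for $n=\infty$) via Feigin's representation-theoretic machinery, not by elementary linear algebra.

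Your proposed workaround at the end is circular. Conjecture~\ref{conj:presentation} \emph{presupposes} that the $\mu_s$ and $x_i$ generate---its content is the relations among already-known generators---so one cannot prove it ``separately'' and deduce generation. Likewise the Hilbert-series formula~(\ref{pnintro}) is conjectural, and Theorem~\ref{shu} in the paper matches the combinatorial model $K(q,t)$ against the Hilbert series of the algebra \emph{presented by} the $\mu_s$, $x_i$ and relations~(\ref{voarels}), not against $\Kh_{alg}$ itself. A dimension-count argument would work only if one had independent control of both sides, which is precisely what is missing.
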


\begin{remark} 
In what follows we will use the following  description of the generators $\mu_s$.
Consider the generating function $\mu(z)=\sum_{s=0}^{\infty}z^{s}\mu_s$. Then
\begin{equation}
\mu(z)=2\dot{x}(z)\xi(z)-x(z)\dot{\xi}(z).
\end{equation}
\end{remark}
Let us describe the generalization of the relations (\ref{mu12}).

\begin{lemma}
The following  relations hold in the homology of $d_2$:
\begin{equation}
\label{voarels}
x(z)^2=0,\quad x(z)\mu(z)=0,\quad \ddot{x}(z)\mu(z)-\dot{x}(z)\dot{\mu}(z)=0,\quad \mu(z)\dot{\mu}(z)=0.
\end{equation}
As before, these relations are to be interpreted as holding modulo \(z^n\). 
\end{lemma}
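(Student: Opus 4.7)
The approach is to prove each of the four relations by exhibiting an explicit $d_2$-primitive in the Koszul complex, making the left-hand side manifestly exact. The only tools required are the defining identity $d_2(\xi(z))=x(z)^2$, its formal $z$-derivatives
$$d_2(\dot\xi(z))=2x(z)\dot x(z),\qquad d_2(\ddot\xi(z))=2\dot x(z)^2+2x(z)\ddot x(z),$$
the graded Leibniz rule $d_2(ab)=d_2(a)b+(-1)^{|a|}a\,d_2(b)$, and the observation that each $d_2(\xi^{(k)}(z))$, being polynomial in the even variables, is central.  Since all proposed primitives below are themselves polynomial in $z$, they can be truncated, so the resulting identities automatically hold modulo $z^n$.

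The first relation $x(z)^2=0$ is immediate from $x(z)^2=d_2(\xi(z))$.  For the second, using that $\dot\xi(z)$ is odd,
$$d_2\bigl(\dot\xi(z)\xi(z)\bigr)=d_2(\dot\xi)\xi-\dot\xi\,d_2(\xi)=2x\dot x\,\xi-x^2\dot\xi=x(z)\mu(z),$$
where the final equality uses the definition $\mu(z)=2\dot x(z)\xi(z)-x(z)\dot\xi(z)$.  For the third, I would first record $\dot\mu(z)=2\ddot x\xi+\dot x\dot\xi-x\ddot\xi$.  Forming $\ddot x\mu-\dot x\dot\mu$ the $\pm 2\dot x\ddot x\,\xi$ terms cancel and one is left with $-(\dot x^2+x\ddot x)\dot\xi+x\dot x\,\ddot\xi$; this is exactly $-\tfrac12\bigl(d_2(\ddot\xi)\dot\xi-\ddot\xi\,d_2(\dot\xi)\bigr)=-\tfrac12\,d_2(\ddot\xi(z)\dot\xi(z))$.

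The fourth relation is the step I expect to be the main (still minor) obstacle, essentially because one must guess the correct primitive; the right guess is $\xi(z)\dot\xi(z)\ddot\xi(z)$.  Expanding $\mu\dot\mu$ and using the exterior-algebra identities $\xi(z)^2=\dot\xi(z)^2=0$ together with $\dot\xi\xi=-\xi\dot\xi$, the surviving terms regroup, via the formulas for $d_2(\xi^{(k)}(z))$, as
$$\mu(z)\dot\mu(z)=d_2(\ddot\xi)\,\xi\dot\xi-d_2(\dot\xi)\,\xi\ddot\xi+d_2(\xi)\,\dot\xi\ddot\xi.$$
On the other hand, iterating the odd Leibniz rule on the product of three odd factors gives exactly the same right-hand side for $d_2(\xi(z)\dot\xi(z)\ddot\xi(z))$, completing the proof.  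Throughout, the only care needed is bookkeeping with the graded Leibniz signs.
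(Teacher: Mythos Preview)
Your proof is correct and follows essentially the same route as the paper: you exhibit the same explicit primitives $\xi\dot\xi$, $\dot\xi\ddot\xi$, and $\xi\dot\xi\ddot\xi$ (up to ordering of the odd factors, hence up to sign) and verify exactness by the graded Leibniz rule together with $d_2(\xi^{(k)}(z))$. Your write-up is in fact slightly more careful than the paper's about tracking the Koszul signs.
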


\begin{proof}
$$x\mu=2x\dot{x}\xi-x^2\dot{\xi}=d_2(\xi\dot{\xi})$$
$$\ddot{x}\mu-\dot{x}\dot{\mu}=\ddot{x}(2\dot{x}\xi-x\dot{\xi})-\dot{x}(2\ddot{x}\xi+2\dot{x}\dot{\xi}-\dot{x}\dot{\xi}-x\ddot{\xi})=$$ $$-x\ddot{x}\dot{\xi}-\dot{x}^2\dot{\xi}+x\dot{x}\ddot{\xi}=-\frac{1}{2}d_2(\dot{\xi}\ddot{\xi}).$$
$$\mu\dot{\mu}=(2\dot{x}\xi-x\dot{\xi})(2\ddot{x}\xi+\dot{x}\dot{\xi}-x\ddot{\xi})=$$
$$2\dot{x}^2\xi\dot{\xi}-2x\dot{x}\xi\ddot{\xi}+2x\ddot{x}\xi\dot{\xi}+x^2\dot{\xi}\ddot{\xi}=d_2(\xi\dot{\xi}\ddot{\xi}).$$
\end{proof}

\begin{conjecture}
\label{rels}
The ideal of relations in the Koszul homology is generated by the coefficients of the relations (\ref{voarels}).
\end{conjecture}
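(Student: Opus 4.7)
The plan is to establish the conjecture by combining the explicit generator/relation description with a Hilbert series comparison. The strategy has three parts. First, define the abstract bigraded $\QQ$-algebra $A_n$ presented by $n$ even generators $x_0,\ldots,x_{n-1}$ and $n-1$ odd generators $\mu_0,\ldots,\mu_{n-2}$ with the specified bidegrees, modulo the ideal $J_n$ generated by the coefficients (in $z$, taken modulo $z^n$) of the four series relations in (\ref{voarels}). Second, show that the map $\phi_n\colon A_n\to \Kh_{alg}(n,\infty;\QQ)$ sending generators to the classes of the same name is well-defined, which follows from the preceding Lemma, and surjective, which is exactly Conjecture~\ref{gens}. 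Third, show that $\phi_n$ is injective by comparing bigraded Hilbert series against the conjectural formula (\ref{pnintro}).

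For the Hilbert series on the presentation side, I would try to construct a bigraded normal form for monomials in $A_n$. Write each element as $p(x)+\sum_{i_1<\cdots<i_s}\mu_{i_1}\cdots\mu_{i_s}\,p_{i_1\ldots i_s}(x)$, and use the relations $x(z)\mu(z)=0$ and $\ddot{x}(z)\mu(z)-\dot{x}(z)\dot{\mu}(z)=0$ to reduce the $x$-coefficients of each odd monomial into a specific set of representatives. This is directly inspired by the Feigin--Stoyanovsky approach: the relation $x(z)^2=0$ is exactly the Lepowsky--Primc relation appearing at level one of $\widehat{\Sl(2)}$, so their combinatorial normal form (indexed by admissible partitions satisfying the Rogers--Ramanujan gap condition) should extend, after tensoring with a fermionic piece encoding the $\mu$'s, to a normal form for $A_n$. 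One would then compute the bigraded character of the span of these monomials and hope to match (\ref{pnintro}); even partial matching, say with the $a=0$ specialization recovering (\ref{rr}), would already validate the approach.

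The main obstacle, and where the conjecture really bites, is showing that no further relations arise beyond (\ref{voarels}). Equivalently, once a normal form is chosen, one must prove that the monomials in normal form remain linearly independent in $\Kh_{alg}$. Two routes seem natural: either (i) produce a sufficiently explicit contracting homotopy for the Koszul complex of the non-regular sequence $d_2(\xi_k)=\sum_{i}x_ix_{k-i}$, which would let us check directly that every would-be relation in the image of $\phi_n$ arises from an element of $\operatorname{Im}(d_2)$ in $\QQ[x_\bullet]\otimes\Lambda^*[\xi_\bullet]$; or (ii) induct on $n$, using the obvious inclusion $A_{n-1}\hookrightarrow A_n$ and its companion on the Khovanov side coming from Sto\v{s}i\'c's stabilization, with the base cases $n=2,3$ verified in Section~\ref{subsec:algex}. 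The non-regularity of $d_2(\xi_k)$ prevents a black-box application of standard Koszul duality, so any such argument must exploit the specific homogeneity and symmetry of the quadratic sequence $\sum x_ix_{k-i}$ under reparameterizations of $z$.

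A useful sanity check throughout would be to match, term by term, the resulting Poincar\'e series with the Shumakovitch--Turner recursion (\ref{recursion}) in the stable limit; since the paper already proves that the recursion limit agrees with $\lim_n P_n(q,t)$, a proof of the presentation would close the loop with the computational evidence in Section~\ref{sec:evidence}, and moreover extract (\ref{pnintro}) from purely combinatorial data.
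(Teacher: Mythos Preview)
The statement you are addressing is a \emph{conjecture} in the paper, not a theorem; the paper offers no proof. The only evidence given is computational: the authors state that they have verified Conjectures~\ref{gens} and~\ref{rels} for $n\le 7$ using {\tt Singular}, and that Theorem~\ref{shu} (a Hilbert series match in the limit $n\to\infty$) provides further indirect support. So there is no ``paper's own proof'' to compare against.

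Your proposal is a reasonable outline of a strategy, but as written it is circular and does not close the gap. Surjectivity of your map $\phi_n$ is precisely Conjecture~\ref{gens}, which is itself open in the paper (the only unconditional result toward it is Feigin's theorem, which gives generation in $\xi$-degree~$1$ only, in the $n=\infty$ case). Injectivity is to be deduced from a Hilbert series match with formula~(\ref{pnintro}), but that formula is also only a conjecture in the paper; it is not proved to be the Poincar\'e series of $\Kh_{alg}(n,\infty)$. Thus both halves of your argument rest on other open statements. Your proposed routes (i) and (ii) are genuine ideas, but you correctly identify that the non-regularity of the sequence $d_2(\xi_k)$ blocks standard Koszul machinery, and you do not supply the missing mechanism. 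In short: this is a sketch of how a proof might go, contingent on resolving two independent conjectures, rather than a proof.
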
 
Using {\tt Singular} \cite{singular}, we have verified that both conjectures hold for \(n\leq7\). 
Some further evidence for these conjectures is provided by Theorem~\ref{shu} in the next section. 
 
\begin{remark}
The relations (\ref{voarels}) are not independent, and there are lots of syzygies between them.
For example, first equation presents $d_2(\xi\dot{\xi})$ in terms of $\mu's$, so its derivative presents
$d_2(\xi\ddot{\xi})$ in terms of $\mu$'s. On the other hand, the second equation presents $d_2(\dot{\xi}\ddot{\xi})$ in terms of $\mu$'s.
This suggests a syzygy 
$$d_2(\xi\dot{\xi})d_2(\ddot{\xi})-d_2(\xi\ddot{\xi})d_2(\dot{\xi})+d_2(\dot{\xi}\ddot{\xi})d_2(\xi)=d_2^2(\xi\dot{\xi}\ddot{\xi})=0.$$
\end{remark} 

\begin{lemma}
Assuming Conjecture \ref{gens},   $\Kh_{alg} (n,\infty)$
 contains at most $\lfloor \frac{n+1}{3}\rfloor$ ``levels'',
i. e. the maximal $\xi$--degree of a homology generator is at most  $\lfloor \frac{n+1}{3}\rfloor$.
\end{lemma}

\begin{proof}
Consider the equation $\mu(z)\dot{\mu}(z)=0$. The coefficients of odd powers of \(z\) look like 
$\mu_{i}\mu_{i+1}+\ldots=0$, while the  coefficients of  even powers of $z$ look like $\mu_{i}\mu_{i+2}+\ldots=0$.
(Recall that $\mu_i$ is odd, so there are no terms $\mu_i\mu_i$).
Therefore one can eliminate all monomials containing $\mu_{i}\mu_{i+1}$ and $\mu_{i}\mu_{i+2}$, and the monomial of the maximal $\xi$-degree is $\mu_0\mu_3\cdots \mu_{\left\lfloor\frac{ n-2}{3}\right\rfloor}.$ Its degree is $\left\lfloor\frac{ n-2}{ 3}\right\rfloor+1=\left\lfloor\frac{ n+1}{ 3}\right\rfloor.$
\end{proof}

Finally, we explain some corollaries of  the recent work of 
Feigin \cite{feigin} which  provide further evidence for Conjectures \ref{gens} and \ref{rels}. Feigin studies properties of the ideal $J$ inside $R=\mathbb{C}[\xi_0,\xi_1,\dots,x_0,x_1,\dots]$
generated by the coefficients of the power series:
$$\mu(z),\quad \xi(z)\dot{\xi}(z),\quad \xi(z)\ddot{\xi}(z),\quad \xi(z)\dddot{\xi}(z).$$
An easy computation shows that $J$ is preserved by $d_2$; the main object of study 
of \cite{feigin} is the differential graded
algebra $R_{(1)}=R/J$. Feigin shows that $R_{(1)}$ is a representation of the Virasoro algebra generated by 
$L_i$, $i\in \ZZ$  and $c$:
$$[L_i,c]=0,\quad [L_m,L_n]=(m-n)L_{m+n}+\delta_{m+n}\frac{m^3-m}{3}c.$$
The central element $c$ acts by the constant $-4/5$ on $R_{(1)}$. The algebra $R_{(1)}$ has a natural grading by 
the odd variables:
$$\deg(\xi_i)=1,\quad \deg(x_i)=0.$$ 
The graded components $R_{(1)}[j]$ are subrepresentations and Feigin identifies them with some particular 
highest weight modules of $L$.

 Let us briefly recall the basics of the highest weight theory for the Virasoro 
algebra. 
The algebra $L$ naturally splits into three parts: the positive part $L^+$ generated by $L_i$,  $i>0$;
the negative part $L^-$ generated by $L_i$, $i<0$; and the span of $L_0,c$. From the relations for $L$ we see
that $L_0$ is a grading operator; it is customary to call the eigenspaces of $L_0$ levels. A vector 
in an  $L$-module is called singular if it is anihilated by $L^+$. The Verma module $M_\lambda$ is an $L_-$-module 
freely generated by the singular vector on  the level $\lambda$.

\begin{theorem}\cite{feigin} We have
\begin{enumerate}
 \item The differential $d_2$ commutes with the action of the Virasoro algebra.
 \item The graded component $R_{(1)}[j]$ is isomorphic to the quotient of the Verma 
 module $M_{\lambda_i}$, $\lambda_i=(5i^2-3i)/2$ by a singular vector at level $2i+1$.
 \item For any $i>0$, $H^i(R_{(1)},d_2)=0.$
 \item $H^0(R_{(1)},d_2)$ is the irreducible quotient of $M_0$.
\end{enumerate}
\end{theorem}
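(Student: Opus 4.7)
The plan is to realize the complex $(R_{(1)}, d_2)$ as a Felder-style BGG resolution of an irreducible Virasoro module at $c=-4/5$, in the spirit of Feigin--Stoyanovsky. First I would construct an explicit stress--energy tensor $T(z)=\sum_n L_n z^{-n-2}$ acting on $R$. A natural candidate is a normal-ordered quadratic expression in $x(z)$, $\xi(z)$, and their derivatives, chosen so that the mode expansion of $T$ reproduces the $t$-grading of the paper (after an appropriate linear reparametrization of the index set so that the $L_0$-weights of $x_k$ and $\xi_k$ are integer-spaced). By direct computation of OPEs one then checks that the four generating series $\mu(z)$, $\xi(z)\dot\xi(z)$, $\xi(z)\ddot\xi(z)$, $\xi(z)\dddot\xi(z)$ transform among themselves under the action of $T(z)$, so that $J$ is stable and the Virasoro action descends to $R_{(1)}$. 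The central charge $c=-4/5$ should drop out of the same OPE calculation.

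For part~(1), I would use that $d_2$ is a chiral odd derivation determined by $d_2(x(z))=0$ and $d_2(\xi(z))=x(z)^2$, so $[L_n,d_2]$ is again an odd derivation. It vanishes on $x(z)$ trivially, and on $\xi(z)$ one reduces $[L_n,d_2](\xi(z))=[L_n,x(z)^2]-d_2([L_n,\xi(z)])$ to zero using the compatible conformal weights of $x(z)$ and $\xi(z)$ and the fact that $d_2$ is a derivation. For part~(2), I would identify an explicit highest-weight vector $v_j$ for $L^+$ in $R_{(1)}[j]$; the likely candidate is the class of a specific $\xi$-monomial of Rogers--Ramanujan type, with indices forming a lacunary pattern adapted to the null relations $\xi\dot\xi,\xi\ddot\xi,\xi\dddot\xi\in J$. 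A direct computation should give $L_0 v_j=(5j^2-3j)/2$, producing a Virasoro surjection $M_{\lambda_j}\twoheadrightarrow R_{(1)}[j]$; the coefficients of the relations defining $J$ at $\xi$-degree~$j$ exhibit a singular vector at level $2j+1$ in the kernel. A character comparison, using the generalized Rogers--Ramanujan identity~\eqref{rr} (which already handles the $j=0$ component) together with the known Virasoro characters at $c=-4/5$, confirms that the kernel is exactly this singular-vector submodule.

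Parts~(3) and (4) are where I expect the main obstacle. Given the identification of each $R_{(1)}[j]$ as an explicit quotient Verma module, each map $d_2\colon R_{(1)}[j+1]\to R_{(1)}[j]$ is automatically Virasoro-equivariant and therefore lies in a one-dimensional space of equivariant maps spanned by the BGG-type embedding sending $v_{j+1}$ to the singular vector of level $2j-1$ in $M_{\lambda_j}$. The technical heart of the proof is to show that the scalar is nonzero for every $j$: I would pick a convenient monomial representative of $v_{j+1}$, evaluate $d_2$ of it directly, and verify that the answer equals the expected BGG image modulo $J$. This matching of the combinatorial action of $d_2$ on explicit $\xi$-monomials with closed-form expressions for Virasoro singular vectors is the step I expect to be delicate. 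Once established, $(R_{(1)},d_2)$ is identified with the Felder complex, and the standard Feigin--Fuchs analysis of the relevant minimal model both gives $H^i(R_{(1)},d_2)=0$ for $i>0$ and identifies $H^0(R_{(1)},d_2)$ with the irreducible quotient of $M_0$, proving (3) and (4) simultaneously.
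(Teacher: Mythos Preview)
The paper does not prove this theorem at all: it is stated with the citation \cite{feigin} and quoted as a result of Feigin, so there is no ``paper's own proof'' to compare against. Your proposal is therefore not competing with anything in the present paper; it is an attempted reconstruction of the argument in \cite{feigin}.

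As a sketch of that argument your outline is broadly reasonable --- the title of \cite{feigin} (``Abelianization of the BGG resolution of representations of the Virasoro algebra'') confirms that the BGG/Felder-complex identification is indeed the mechanism --- but a few points are worth flagging. First, your plan to build $T(z)$ as a normal-ordered quadratic in $x(z),\xi(z)$ is underspecified: getting $c=-4/5$ and the correct conformal weights simultaneously is a genuine constraint, and in practice Feigin's construction proceeds via an explicit free-field (bosonization/fermionization) realization rather than by guessing a quadratic and checking OPEs. Second, the step you correctly identify as ``delicate'' --- showing that the equivariant map $d_2\colon R_{(1)}[j+1]\to R_{(1)}[j]$ is nonzero for every $j$ --- is the entire content of the theorem, and your proposed method (evaluate $d_2$ on a monomial representative of $v_{j+1}$ and match against a closed-form singular vector) is unlikely to work directly, since explicit formulas for Virasoro singular vectors at arbitrary level are not available in usable form. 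The actual argument goes the other way: one starts from the known BGG resolution of the irreducible module and then identifies it with $(R_{(1)},d_2)$ via the abelianization procedure, rather than verifying exactness by hand.
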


\begin{corollary}
The lower level of $\Kh_{alg}(\infty,\infty)$ is the irreducible representation of the Virasoro algebra. 
\end{corollary}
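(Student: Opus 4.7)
The plan is to deduce the corollary directly from part~(4) of Feigin's theorem by matching the ``lower level'' of $\Kh_{alg}(\infty,\infty)$ with the group $H^0(R_{(1)},d_2)$. Throughout, ``lower level'' is the $\xi$--degree~$0$ component, so I must identify the $\xi$--degree~$0$ part of the Koszul homology of $R=\CC[x_0,x_1,\ldots,\xi_0,\xi_1,\ldots]$ with Feigin's group $H^0(R_{(1)},d_2)$, and then invoke Feigin's identification of the latter with the irreducible quotient $L(-4/5,0)$ of the Verma module $M_0$.

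First I would record the key structural observation about the ideal $J\subset R$: its generators are the coefficients of $\mu(z)$, $\xi(z)\dot\xi(z)$, $\xi(z)\ddot\xi(z)$, and $\xi(z)\dddot\xi(z)$, which have $\xi$--degrees $1,2,2,2$ respectively. Consequently $J[0]=0$, so the quotient map $R\to R_{(1)}=R/J$ is the identity in $\xi$--degree~$0$, and $R_{(1)}[0]=R[0]=\CC[x_0,x_1,\ldots]$.

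Next I would compute $H^0(R_{(1)},d_2)$ as follows. Since $d_2$ lowers $\xi$--degree by one, $H^0$ equals $R_{(1)}[0]$ modulo the image of $d_2\colon R_{(1)}[1]\to R_{(1)}[0]=R[0]$. Lifting classes from $R_{(1)}[1]=R[1]/J[1]$ to $R[1]$, the image is $d_2(R[1])$ (well defined since $d_2(J[1])\subset J[0]=0$), which is exactly the ideal $I_\infty$ generated by the coefficients of $x(z)^2$. Hence
\[
H^0(R_{(1)},d_2)\;=\;\CC[x_0,x_1,\ldots]/I_\infty.
\]
The same computation applied to the original Koszul complex shows that the $\xi$--degree $0$ component of $\Kh_{alg}(\infty,\infty)$ is $\CC[x_0,x_1,\ldots]/I_\infty$ as well, so the two groups coincide. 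Feigin's theorem, part~(4), then identifies this common quotient with the irreducible quotient of $M_0$, which carries the unique irreducible Virasoro representation of central charge $-4/5$ and highest weight $0$.

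Finally, I would need to check that the Virasoro action supplied by Feigin on $R_{(1)}$ restricts to the natural Virasoro action on the lower level of $\Kh_{alg}(\infty,\infty)$; this is automatic because part~(1) of Feigin's theorem says $d_2$ commutes with the Virasoro action and the identification of the two $H^0$'s above is given by the identity map on $R[0]$. The only substantive point in the argument is the vanishing $J[0]=0$, which is transparent from the form of the generators of $J$; no hard step appears, since all the analytic content is absorbed in Feigin's theorem. The corollary therefore follows with essentially no additional work beyond the bookkeeping described above.
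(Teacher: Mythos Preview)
Your argument is correct and is exactly the bookkeeping the paper leaves implicit: the corollary is stated immediately after Feigin's theorem with no separate proof, and your identification of the lower level of $\Kh_{alg}(\infty,\infty)$ with $H^0(R_{(1)},d_2)$ via the observation $J[0]=0$ is precisely the intended (and only) step. There is nothing to add.
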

Apart from the  \(\mu_i\), all other generators of the  ideal $J$    have odd degree at least $2$, hence
\begin{corollary}
The first homology of $d_2$ is generated by $\mu_i$ and $x_i$. 
\end{corollary}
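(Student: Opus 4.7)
The plan is to deduce this directly from part (3) of Feigin's theorem, namely $H^1(R_{(1)},d_2)=0$, by carefully controlling the ideal $J \subset R$ in low $\xi$-degree. Recall that with $\deg_\xi(\xi_i)=1$ and $\deg_\xi(x_i)=0$ the differential $d_2$ lowers $\xi$-degree by one, so the ``first homology'' is the $\xi$-degree one component of $H^*(R,d_2)$. The claim amounts to saying that every such cycle is homologous to an expression of the form $\sum_i p_i(x)\mu_i$ with $p_i \in \mathbb{C}[x_0,x_1,\ldots]$.

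First I would analyze $J$ by $\xi$-degree. Among its four families of generators, the coefficients of $\mu(z)$ have $\xi$-degree $1$, while the coefficients of $\xi(z)\dot\xi(z)$, $\xi(z)\ddot\xi(z)$, and $\xi(z)\dddot\xi(z)$ all have $\xi$-degree $2$. Since elements of $R$ have non-negative $\xi$-degree, multiplying any generator by a coefficient in $R$ can only increase its $\xi$-degree. Therefore the $\xi$-degree one part of $J$ is spanned by products $p(x)\mu_i$ with $p$ a polynomial in the $x_j$ alone; in other words,
$$J \cap R[1] \;=\; \mathbb{C}[x_0,x_1,\ldots]\cdot\operatorname{span}(\mu_0,\mu_1,\ldots).$$

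Now let $\alpha \in R$ be a $d_2$-cycle of $\xi$-degree $1$. Its image $\bar\alpha \in R_{(1)}$ is a cycle in the same degree, and the vanishing $H^1(R_{(1)},d_2)=0$ produces $\bar\beta \in R_{(1)}$ of $\xi$-degree $2$ with $d_2(\bar\beta)=\bar\alpha$. Choosing any lift $\beta\in R$ of $\bar\beta$, the element $\gamma := \alpha - d_2(\beta)$ lies in $J$ and still has $\xi$-degree $1$ (both $\alpha$ and $d_2(\beta)$ do). By the previous step, $\gamma = \sum_i p_i(x)\mu_i$ for some polynomials $p_i$, and therefore $[\alpha] = \sum_i p_i(x)[\mu_i]$ in $H^1(R,d_2)$, which proves the corollary.

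The main obstacle is conceptual rather than computational: one must recognize that Feigin's ideal $J$ is set up precisely so that, beyond the $\mu_i$ themselves, all its remaining generators live in $\xi$-degree at least $2$, and hence cannot contribute spurious cycle classes in $\xi$-degree $1$. Once this observation is made, the proof reduces to the standard lift-and-correct argument above, and no further input is needed.
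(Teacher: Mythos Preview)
Your proof is correct and follows exactly the approach the paper intends: the paper's one-line justification is the observation that, apart from the $\mu_i$, all generators of $J$ have $\xi$-degree at least $2$, and your lift-and-correct argument using $H^1(R_{(1)},d_2)=0$ is precisely the implicit step that turns this observation into the corollary.
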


\begin{remark}
The theorem above is a particular case of more general result from \cite{feigin} that might be relevant
for studies of $\Sl(N)$ homology. Also it is very plausable that one can extend the result of the last corollary
to  higher homological degrees by some bootstrapping procedure. We hope to return to this question in our future work.
\end{remark}

\subsection{Lee's spectral sequence}

%{(\bf Jake: again, this section would work better if we could prove Conj. 26.)}

It was conjectured in \cite{dgr} that  Lee's spectral sequence (\cite{lee,slice}) is induced by a differential $d_1$ that commutes with $d_2$. We propose a formula for this differential:
$$d_1(\xi_i)=x_i.$$ 
Remark that if $d_1$ satisfies the Leibnitz rule, it is uniquely defined by the grading restrictions.
Let us consider the spectral sequence induced by $d_1$ on $\Kh_{alg}$.

\begin{example}
Consider $\Kh(T(2,\infty))$. As  was shown in Example \ref{ex2},  
the homology is generated by $x_0, x_1$ and $\mu_0$ modulo the  relations $$x_0^2=2x_0x_1=x_0\mu_0=0.$$
Remark that
$$d_1(\mu_0)=d_1(2x_1\xi_0-x_0\xi_1)=2x_1x_0-x_0x_1=x_0x_1=\frac{1}{2}d_2(\xi_1).$$
This means that the second differential in the spectral sequence (Bar-Natan's knight move) acts as
$$\delta(\mu_0)=d_1\circ d_2^{-1}\circ d_1(\mu_0)=d_1\left(\frac{1}{2}\xi_1\right)=\frac{1}{2}x_1.$$
Therefore $\mu_0$ kills $\xi_1$ by the knight move, and the spectral sequence converges at the $E_3$ page to
the two-dimensional space
$$E_{3}=E_{\infty}=\langle 1,x_0\rangle .$$
\end{example}

\begin{remark}
One can find an apparent contradiction in this result --- the homology of $d_1$ is clearly one-dimensional, while the spectral sequence converges to a two-dimensional space. This problem is caused by the fact that the  homology is infinite dimensional. One can check that for a finite $(2,m)$ knot (i.e. for a suitable finite-dimensional quotient of this complex) the homology of $d_1$ will be two-dimensional --- one generator will be 1, while the degree of the second one will grow as $m$ increases. 
\end{remark}
 
Motivated by this example, we formulate the following algebraic counterpart of the conjectures from \cite{bnat2} and \cite{slice}. 
The following theorem holds modulo Conjectures \ref{gens} and \ref{rels}.
 
\begin{theorem}
\label{e3}
Consider the spectral sequence induced by $d_1$  acting on $\Kh_{alg}(n,\infty)$. 
Then $E_1=E_2=H^{*}(d_2).$ and   $E_3=E_{\infty}=\langle 1,x_0\rangle .$
In particular, the spectral sequence converges at the $E_3$ page.
\end{theorem}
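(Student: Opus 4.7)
The plan is a three-step argument: (i) explicit computation of $\partial_1$ and $\partial_2$ on the generators of $E_1$, using Conjecture \ref{gens}; (ii) identification of $E_\infty$ as the homology of the total complex $(A, d_1+d_2)$ via a Koszul regular-sequence calculation; and (iii) a collapsing argument at $E_3$ using an augmentation together with the explicit presentation of Conjecture \ref{rels}.

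First I would check that $d_1$ and $d_2$ are odd derivations on $A = \QQ[x_i]\otimes\Lambda^*[\xi_i]$ satisfying $d_1 d_2 + d_2 d_1 = 0$ (both composites vanish on generators), giving a bicomplex and a multiplicative spectral sequence from the $q$-filtration, with $E_1 = H^*(d_2) = \Kh_{alg}(n,\infty)$. Since $d_1(x_i)=0$ and symmetrization of $\mu_s = \sum_k (3k-s-1)\,x_k \xi_{s+1-k}$ yields $d_1(\mu_s) = \tfrac{s+1}{2}\sum_k x_k x_{s+1-k} = \tfrac{s+1}{2}\,d_2(\xi_{s+1})$, the induced $\partial_1$ vanishes on all generators and hence everywhere; so $E_2 = E_1$. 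The standard knight-move, with preimage $\beta = \tfrac{s+1}{2}\xi_{s+1}$, yields $\partial_2[\mu_s] = \tfrac{s+1}{2}[x_{s+1}]$ and $\partial_2[x_i]=0$, extended by Leibniz to all of $E_2$.

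Second, I would identify $E_\infty$ via the total differential $D = d_1 + d_2$. One has $D\xi_i = y_i := x_i + \sum_{j+k=i} x_j x_k$, so $(A, D)$ is the Koszul complex of $(y_0,\ldots,y_{n-1})$ in $R := \QQ[x_0,\ldots,x_{n-1}]$. A short induction on $i$, writing $y_i = (1+2x_0)x_i + (\text{terms in lower }x_j)$, shows that $V(y_0,\ldots,y_{n-1})$ consists of exactly the two reduced points $(0,\ldots,0)$ and $(-1,0,\ldots,0)$ (the Jacobians there are $\pm I$). Hence $(y_i)$ is a regular sequence and $H^*(A, D)\cong R/(y_0,\ldots,y_{n-1})\cong \QQ\oplus\QQ$, spanned by $1$ and $x_0$. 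Strong convergence gives $E_\infty \cong \QQ\langle 1, x_0\rangle$. The classes $[1]$ and $[x_0]$ in fact survive to every $E_r$ ($r\geq 2$) by a $t$-parity argument: $\partial_r$ has $t$-degree $-1$, but every monomial in $E_2$ has either even $t$-degree (no $\mu_s$ factors) or $t$-degree $\geq 3$ (contains some $\mu_s$, whose $t$-degree is $\geq 3$), so the bidegrees that could serve as sources or targets of $\partial_r$ attached to $[1]$ or $[x_0]$ are empty.

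Third, to upgrade to $E_3 = E_\infty$ it remains to prove $\dim E_3 = 2$. I would use the augmentation $\epsilon : E_2 \to \QQ[x_0]/(x_0^2)$ defined by $x_0\mapsto x_0$, $x_i\mapsto 0$ for $i\geq 1$, and $\mu_s\mapsto 0$. Using Conjecture \ref{rels}, each generating relation ($x(z)^2$, $x(z)\mu(z)$, $\ddot x\mu-\dot x\dot\mu$, $\mu\dot\mu$) maps to zero, so $\epsilon$ is a well-defined ring homomorphism; it is a chain map (with zero target differential), because every $\partial_2$-image contains an $x_{s+1}$ with $s\geq 0$; and it splits as graded vector spaces via $\QQ[x_0]/(x_0^2)\hookrightarrow E_2$. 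Hence
\[
H^*(E_2,\partial_2) \;\cong\; \QQ\langle 1,x_0\rangle \;\oplus\; H^*(\ker\epsilon,\partial_2),
\]
so the theorem reduces to showing $\ker\epsilon$ is $\partial_2$-acyclic. This is the \textbf{main obstacle}: the natural Koszul contracting homotopy $h(x_i)=\tfrac{2}{i}\mu_{i-1}$ on the free algebra $\QQ[x_i]\otimes\Lambda^*[\mu_s]$ gives $H^*(\text{free},\partial_2)=\QQ[x_0]$, but $h$ does not preserve the relation ideal of Conjecture \ref{rels} and hence fails to descend to $E_2$. One strategy is to filter $\ker\epsilon$ by $\mu$-degree and proceed inductively: the $\mu$-degree-zero piece $\ker\epsilon\cap E_2^{(0)} = (x_1,\ldots,x_{n-1})\,E_2^{(0)}$ is exhausted by $\partial_2(\mu_s a)=\tfrac{s+1}{2}x_{s+1}a$ for cycles $a$; for higher $\mu$-degrees, the specific syzygies among the relations of Conjecture \ref{rels} (for instance, those noted in the remark following the lemma of \S\ref{Sec:mus}) must be leveraged to construct the contracting homotopy level by level.
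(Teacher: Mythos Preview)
Your step (i) is exactly the paper's argument: compute $d_1(\mu_s)=\tfrac{s+1}{2}\,d_2(\xi_{s+1})$ from the symmetrization $\varepsilon_{k,s+1-k}+\varepsilon_{s+1-k,k}=s+1$, conclude $\partial_1=0$ on generators so $E_1=E_2$, and then obtain the knight-move $\partial_2[\mu_s]=\tfrac{s+1}{2}[x_{s+1}]$ by lifting through $\xi_{s+1}$. The paper's proof \emph{ends} right there: it invokes multiplicativity, notes that each $x_i$ ($i\geq 1$) is hit by $\mu_{i-1}$, and declares $E_3=\langle 1,x_0\rangle$. There is no augmentation, no total-complex argument, no acyclicity check.

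Your step (ii) is genuinely different from anything in the paper, and it is a clean addition: observing that $(A,d_1+d_2)$ is the Koszul complex on $y_i=x_i+\sum_{j+k=i}x_jx_k$ and that the $y_i$ form a regular sequence (via the two reduced points of $V(y_0,\dots,y_{n-1})$) gives $E_\infty\cong\QQ\langle 1,x_0\rangle$ unconditionally, i.e.\ without assuming Conjectures \ref{gens} or \ref{rels}. The paper does not notice this; it obtains both $E_3$ and $E_\infty$ only conditionally on those conjectures.

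Your step (iii), however, is over-engineered relative to what the paper does, and this is where your proposal stalls. The paper simply takes for granted that in a multiplicative spectral sequence, once every algebra generator except $x_0$ is either a $\partial_2$-boundary ($x_1,\dots,x_{n-1}$) or a non-cycle ($\mu_0,\dots,\mu_{n-2}$), the $E_3$ page is generated by $x_0$ alone. You correctly identify that this informal step hides a real issue: one must rule out new $\partial_2$-cycles in higher $\mu$-degree that are not boundaries, and your contracting-homotopy approach runs into the relations of Conjecture \ref{rels}. This is a genuine subtlety that the paper's two-line conclusion glosses over; you are not missing a trick so much as holding yourself to a higher standard of rigor than the paper does. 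If you want to close the gap, your own step (ii) already gives $\dim E_\infty=2$, so it suffices to show $\partial_r=0$ for $r\geq 3$; a direct route is to check that the differential ideal $J$ of relations satisfies $H^{>0}(J,\partial_2)=0$ and $H^0(J,\partial_2)=(x_0^2)\subset\QQ[x_0]$, using the explicit short list of generators in \eqref{voarels} and the fact that $\partial_2$ carries each of them into $J$ (as you can verify term-by-term from $\partial_2\mu(z)=\tfrac12\dot x(z)$).
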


\begin{proof}
Assuming the conjectures, the stable homology is generated by $\mu_s$ and $x_i$, so the multiplicativity of the spectral sequence
allows us to focus on these generators. Remark that
\begin{equation}
\label{d1mu}
d_1(\mu_s)=\sum_{k=0}^{s+1}\varepsilon_{k,s+1-k}x_{k}x_{s+1-k}=\frac{s+1}{2}\sum_{k=0}^{s+1}x_{k}x_{s+1-k}=\frac{s+1}{2}d_2(\xi_{s+1}).
\end{equation}
Here we used the equation
$$\varepsilon_{k,s+1-k}+\varepsilon_{s+1-k,k}=2k-(s+1-k)+2(s+1-k)-k=s+1.$$
We can compute the second differential in the spectral sequence $\delta(\mu_s)$ using the equation (\ref{d1mu}):
$$\delta(\mu_s)=d_1\circ d_2^{-1}\circ d_1(\mu_s)=d_1\left(\frac{s+1}{2}\xi_{s+1}\right)=\frac{s+1}{2}x_{s+1}.$$
Therefore each  even generators $x_i$ is killed by $\mu_{i-1}$.
\end{proof}

\begin{remark}
One can reformulate this proof in terms of the generating series. We have
$$d_1(\mu(z))=d_1(-x(z)\dot{\xi}(z)+2\dot{x}(z)\xi(z))=-x(z)\dot{x}(z)+2x(z)\dot{x}(z)=x(z)\dot{x}(z)=\frac{1}{ 2}d_2(\dot{\xi}(z)).$$
Therefore
$$\delta(\mu(z))=d_1\circ d_2^{-1}\circ d_1(\mu(z))=d_1\left(\frac{1}{2}\dot{\xi}(z)\right)=\frac{1}{2}\dot{x}(z).$$
\end{remark}
One can prove a similar theorem for the reduced homology (see section \ref{secred} below). In the reduced case, the $E_{\infty}$ term will be one-dimensional and spanned by 1.

\section{ Poincar{\'e} polynomials}

\subsection{Bosonic formula}

In this section we give a conjectural formula for the Poincare polynomial of \(\Kh_{alg}(n,\infty) \) for all \(n\). 
This formula comes from  computer experiments, and it can be considered as a potential generalization of the ``bosonic" side of the
Rogers-Ramanujan identity in \cite{fs} (see also \cite{lf,kac,loktev}). It is worth to note that this ``bosonic formula" was obtained in \cite{fs}
using localization  on the affine flag variety for $\widehat{\Sl(2)}$.
We plan to compare this approach with the one proposed below in the future.

 Recall that the lower level of  $\Kh_{alg}(n,\infty)$ can be described by the quotient
of the algebra $\mathbb{C}[x_0,\ldots,x_{n-1}]$ by the ideal generated by the first $n$ coefficients of $x(z)^2$.

\begin{conjecture}
\label{conj:l}
Let  $z=q^2t^2$.  The unreduced Hilbert series for the lower level of $\Kh_{alg}(n,\infty)$ 
 has the  form
\begin{equation}
\label{ln}
L_{n}(q,t)=\frac{1}{\prod_{k=1}^{n}(1-q^{2k}t^{2k-2})}\sum_{p=0}^{\infty}(-1)^{p}\prod_{k=1}^{p}(1-q^{2k}t^{2k-2})\times \\
\end{equation}
$$\left(q^{5p^2+p}t^{5p^2-3p}\binom{n-2p+1}{p}_z -q^{(p+1)(5p+4)}t^{5p^2+5p}\binom{n-2p-1}{p}_z\right).
$$
Here we use the standard $z$-binomial notation: 
$$[m!]_{z}=\prod_{k=1}^{m}\frac{1-z^k}{1-z},\quad \binom{m}{l}_{z}=\frac{[m!]_{z}}{ [l!]_{z}[m-l!]_{z}} \quad (m\ge l).$$
\end{conjecture}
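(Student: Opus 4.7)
The plan is to prove the conjecture by identifying $L_n(q,t)$ with the bigraded Hilbert series of $R_n := \mathbb{C}[x_0,\ldots,x_{n-1}]/I_n$, where $I_n$ is the ideal generated by the coefficients of $x(z)^2 \pmod{z^n}$. In the limit $n\to\infty$ the conjectured identity reduces to the Feigin-Stoyanovsky formula (\ref{rr}), which they established via the integrable level-$1$ representation of $\widehat{\Sl(2)}$ and localization on its affine flag variety. The task is therefore to upgrade that result to a finite-$n$ statement, keeping careful track of truncation effects.

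The first step is to set up a ``fermionic'' companion formula: produce an explicit monomial basis for $R_n$ consisting of the admissible monomials $x_{i_1} \cdots x_{i_k}$ with $0 \le i_1 \le \cdots \le i_k \le n-1$ satisfying a Rogers-Ramanujan type condition such as $i_{j+2} \ge i_j + 2$. For $n=\infty$ this is the Feigin-Stoyanovsky basis; at finite $n$ one must check that the reduction algorithm using $R_m = 2x_0 x_m + 2x_1 x_{m-1} + \cdots$ still terminates inside the truncated ring and produces the same admissible set. Summing the bidegrees over admissible tuples yields a finite $q$-hypergeometric expression for $L_n$ that one can, in principle, compare with the conjectural right-hand side.

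The second step is to prove the resulting finite identity. Each summand on the bosonic side should correspond to a term in a free resolution of $R_n$ over $\mathbb{C}[x_0,\ldots,x_{n-1}]$: the factor $q^{5p^2+p}t^{5p^2-3p}\binom{n-2p+1}{p}_z$ should account for a family of ``fundamental'' syzygies at homological level $p$, whose multiplicity as a function of $n$ is encoded by the $z$-binomial, while the $q^{(p+1)(5p+4)}t^{5p^2+5p}\binom{n-2p-1}{p}_z$ term should account for their ``top-level'' counterparts; the alternating signs arise as the Euler characteristic of the resolution. To verify this numerology I would proceed by induction on $p$, using Feigin's identification of the graded pieces of $R_{(1)}$ with quotients of Virasoro Verma modules $M_{\lambda_p}$, $\lambda_p=(5p^2-3p)/2$, by singular vectors at level $2p+1$; the exponents appearing in the bosonic formula match precisely the shifted weights of these singular vectors after bigrading. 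A purely combinatorial route, bypassing the Virasoro theory, would be to deduce fermionic $=$ bosonic directly from the $q$-Pfaff-Saalsch\"utz summation and a telescoping argument in the spirit of the Andrews-Gordon proof of the finite Rogers-Ramanujan identity.

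The main obstacle is the transition from $n=\infty$ to finite $n$. The full Virasoro symmetry underlying the Feigin-Stoyanovsky argument is broken by the truncation $\bmod z^n$, and the two distinct $z$-binomials $\binom{n-2p+1}{p}_z$ and $\binom{n-2p-1}{p}_z$ reflect delicate boundary corrections coming from relations that ``fall off the end'' of the window $0 \le m < n$. Making these corrections precise requires constructing a finite-dimensional analog of Feigin's BGG-style resolution and verifying, at each homological level, that the dimension jumps caused by truncating out a singular vector are captured by the $z$-binomial multiplicities. I expect this bookkeeping---particularly reconciling the slightly different window sizes $n-2p+1$ and $n-2p-1$, which presumably come from the two endpoints of a Weyl-group-type reflection pattern on the truncated relations---to be where the bulk of the technical effort lies.
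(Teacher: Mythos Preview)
The statement you are addressing is labeled a \emph{Conjecture} in the paper, and the paper gives no proof of it. The authors say explicitly that the formula ``comes from computer experiments,'' and the only justification offered is the Remark that in the limit $n\to\infty$ the $z$-binomials degenerate and the identity reduces to the Feigin--Stoyanovsky formula~(\ref{rr}), which is known. For finite $n$ the paper asserts nothing beyond numerical evidence. So there is no ``paper's own proof'' to compare your proposal against.

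As for your proposal itself: it is a research outline, not a proof. Each of the two steps you describe contains a substantial open problem. In Step~1 you would need to show that the Feigin--Stoyanovsky admissible-monomial basis survives truncation to $\mathbb{C}[x_0,\ldots,x_{n-1}]/I_n$; this is plausible but not immediate, since the straightening relations $\sum_{i+j=m} x_i x_j$ for $m\ge n$ are simply absent, and you have not explained why no new linear dependencies (or independencies) arise at the boundary. In Step~2 you offer two routes---a representation-theoretic one via a finite-$n$ analogue of Feigin's BGG-type resolution, and a $q$-series one via Pfaff--Saalsch\"utz and telescoping---but you carry neither out, and you yourself identify the boundary bookkeeping (the discrepancy between $\binom{n-2p+1}{p}_z$ and $\binom{n-2p-1}{p}_z$) as the crux. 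That is exactly where a proof would have to do real work, and your proposal stops short of it. What you have written is a sensible strategy for attacking the conjecture, consistent with the hints the paper gives, but it does not constitute a proof and the paper does not claim one either.
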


\begin{remark}
In the limit $n\to \infty$ the $z$-binomial coefficients degenerate to simple products:
$$\binom{n-2p+1}{p}_z, \binom{n-2p-1}{p}_z\xrightarrow{n\to\infty} \frac{1}{ (1-z)(1-z^2)\ldots (1-z^p)}=\prod_{k=1}^{p}\frac{1}{ (1-q^{2k}t^{2k})},$$
therefore the equation (\ref{ln}) has a limit 
$$L_{\infty}(q,t)=\frac{1}{ \prod_{k=1}^{n}(1-q^{2k}t^{2k-2})}\sum_{p=0}^{\infty}(-1)^{p}\prod_{k=1}^{p}\frac{(1-q^{2k}t^{2k-2})}{ (1-q^{2k}t^{2k})}\times $$
$$\times \left(q^{5p^2+p}t^{5p^2-3p}-q^{(p+1)(5p+4)}t^{5p^2+5p}\right).$$
This is the right hand side of the generalized Rogers-Ramanujan identity (\ref{rr}), and therefore in this limit 
 Conjecture \ref{conj:l} follows from the results of \cite{fs}.
\end{remark}

%\begin{proof}

%\end{proof}

One can try to  extend the equation (\ref{ln}) to higher levels of the Koszul homology.

\begin{conjecture}
The unreduced Hilbert series for \(\Kh_{alg}(n,\infty)\) has the form
\begin{equation}
\label{pn}
P_{n}(q,t)=\frac{1}{ \prod_{k=1}^{n}(1-q^{2k}t^{2k-2})}\sum_{p=0}^{\infty}(-1)^{p}\prod_{k=1}^{p}(1-q^{2k}t^{2k-2})\times \\
\end{equation}
$$\times\prod_{k=3p+1}^{n-1}(1+q^{2k+6}t^{2k+1})\prod_{k=1}^{2p-1}(1+q^{2k+2}t^{2k-1})\times$$
$$[q^{5p^2+p}t^{5p^2-3p}(1+\chi_p^+q^{6p+4}t^{6p-1})(1+q^{6p+6}t^{6p+1})\binom{n-2p+1}{p}_{z}+$$
$$q^{5p^2+7p+2}t^{5p^2+3p-1}(1+q^{6p+6}t^{6p+1})(1-q^{2p+2}t^{2p})\binom{n-2p}{p}_{z}-$$
$$q^{5p^2+9p+4}t^{5p^2+5p}(1+q^{2p+2} t^{2p+1})(1+\chi_p^+ q^{4p+2}t^{4p-1})\binom{n-2p-1}{p}_{z}],$$
where \(\chi_p^+ = 0\) when \(p=0\), \(\chi_p^+=1\) for \(p>0\), and the second product inside the  sum
is $1$ when $3p+1>n-1$.
\end{conjecture}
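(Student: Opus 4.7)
The plan is to prove the formula conditionally on Conjectures \ref{gens} and \ref{rels}, which together present $\Kh_{alg}(n,\infty)$ as the super-commutative algebra on generators $x_0,\ldots,x_{n-1}$ (even, of bidegree $q^{2i+2}t^{2i}$) and $\mu_0,\ldots,\mu_{n-2}$ (odd, of bidegree $q^{2s+8}t^{2s+3}$), modulo the four families of coefficients of (\ref{voarels}) read mod $z^n$. The task then reduces to a purely combinatorial Hilbert-series calculation. I would split this calculation into a ``bosonic'' part (the lower level) and a ``fermionic'' extension accounting for the $\mu$'s, and then assemble the pieces with signs coming from a resolution.

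For the bosonic part, one first proves the finite-$n$ version of the Feigin--Stoyanovsky identity, namely Conjecture~\ref{conj:l}. The $n\to\infty$ limit is identity (\ref{rr}), and at finite $n$ the $z$-binomials $\binom{n-2p+1}{p}_z$ and $\binom{n-2p-1}{p}_z$ play the role of the truncated partition generating function. I would adapt the Feigin--Stoyanovsky method (semi-infinite wedge / monomial basis via Lepowsky--Primc equations) to the finite Weyl-chamber setting, producing a basis of the quotient $\CC[x_0,\ldots,x_{n-1}]/I_n$ indexed by Gordon-type partitions with a prescribed difference condition; the $(-1)^p$ weighting comes from the standard Durfee-square / Bressoud bijection.

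For the fermionic extension, filter $\Kh_{alg}(n,\infty)$ by $\mu$-degree. The relation $\mu(z)\dot\mu(z)=0$ unpacks (as in the proof of the ``levels'' bound) to the statement that every product $\mu_i\mu_{i+1}$ and $\mu_i\mu_{i+2}$ can be rewritten in terms of $\mu_j\mu_k$ with $k\geq j+3$; hence a basis of the purely-$\mu$ part at level $p$ consists of monomials $\mu_{i_1}\cdots\mu_{i_p}$ with $i_{j+1}-i_j\geq 3$. The bidegree shift by these $\mu$'s accounts exactly for the combined factor $\prod_{k=3p+1}^{n-1}(1+q^{2k+6}t^{2k+1})\,\prod_{k=1}^{2p-1}(1+q^{2k+2}t^{2k-1})$ after summing over configurations: the first product tracks the ``free'' $\mu$'s above index $3p$, and the second tracks the indices squeezed below. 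The relation $x(z)\mu(z)=0$ then forces each such $\mu$-monomial to be multiplied by the image of a proper subalgebra of $\CC[x_i]$, which is controlled by a shifted Feigin--Stoyanovsky quotient; this produces the three binomial terms $\binom{n-2p+1}{p}_z$, $\binom{n-2p}{p}_z$, $\binom{n-2p-1}{p}_z$ and the correction factors $(1-q^{2p+2}t^{2p})$, $(1+q^{6p+6}t^{6p+1})$, $(1+\chi_p^+ q^{6p+4}t^{6p-1})$.

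The main obstacle is the non-independence of the relations (\ref{voarels}). The syzygy displayed after Conjecture \ref{rels} --- and the further syzygies one must exhibit to get a minimal free resolution --- mean that a naive multiplication of generating functions double-counts; the subtle correction factors and signs in the formula are precisely what accounts for these higher syzygies. I would attack this by constructing the minimal bigraded resolution of the quotient algebra explicitly, term by term in $\mu$-level, using Feigin's identification (cited at the end of Section \ref{Sec:mus}) of the relevant graded components $R_{(1)}[j]$ with irreducible quotients of Virasoro Verma modules $M_{\lambda_i}$ at $\lambda_i=(5i^2-3i)/2$. The characters of these irreducible modules are exactly the $q^{5p^2-3p}$-type terms in (\ref{pn}), and the BGG-type resolution of each irreducible by Vermas supplies both the alternating signs and the $\prod_{k=1}^{p}(1-q^{2k}t^{2k-2})$ factors. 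Gluing the Virasoro character computation with the finite-$n$ $z$-binomial truncation is the technical heart of the proof, and to my mind is the step most likely to require genuinely new input beyond what is already in \cite{fs} and \cite{feigin}.
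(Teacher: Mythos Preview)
The statement you are attempting to prove is presented in the paper as a \emph{conjecture}, not a theorem; the paper offers no proof. The authors say explicitly that ``this formula comes from computer experiments,'' and the only evidence they provide is the verification for $n\le 7$ in Appendix~A (via \texttt{Singular}) together with the observation that the $n\to\infty$ limit matches the Feigin--Stoyanovsky/Rogers--Ramanujan identity~(\ref{rr}). So there is nothing in the paper to compare your argument against: you are trying to prove something the authors do not.

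As a research outline your plan is plausible in spirit --- reduce to the presentation of Conjectures~\ref{gens} and~\ref{rels}, then compute the Hilbert series of that presented algebra level-by-level in $\mu$-degree --- but several of your intermediate assertions do not check out as stated, and you yourself flag the main step as open. Two concrete issues: (i) Your reading of the factor $\prod_{k=3p+1}^{n-1}(1+q^{2k+6}t^{2k+1})$ as ``free $\mu$'s above index $3p$'' is off; $(1+q^{2k+6}t^{2k+1})$ is not the bidegree of any $\mu_j$ (which has $\deg\mu_j=q^{2j+8}t^{2j+3}$), and in any case that product contributes at \emph{all} $\mu$-levels at once, not at a fixed level $p$. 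The formula~(\ref{pn}) looks much more like an Euler-characteristic/resolution expression than a direct monomial-basis count, so a combinatorial ``basis with gaps $\ge 3$'' story will not by itself reproduce the three bracketed terms with their peculiar correction factors. (ii) Even granting Conjectures~\ref{gens} and~\ref{rels}, the algebra you must analyze is highly non-Koszul (the relations are not independent, as the paper notes), so passing from the presentation to a Hilbert series requires a genuine minimal resolution; invoking Feigin's Virasoro result is reasonable, but that result is proved only at $n=\infty$, and your final sentence concedes that the finite-$n$ truncation is exactly where ``genuinely new input'' is needed. In short, what you have written is a credible sketch of where a proof \emph{might} come from, not a proof --- and the paper itself does not claim to have one.
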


\subsection{Fermionic formula for \(T(\infty, \infty)\)}
%{\bf (Jake: This seems to belong in a separate section.)}
Let $\kK_n(q,t)$ denote the Poincar\'e polynomial of \(\Kh(T(n,n+1))\). 
Based on  experimental data,  Shumakovich and Turner conjectured  
that \(\kK_n\) 
satisfies the following recurrence relation.

\begin{conjecture}(\cite{shucomm})
%\label{shuc}

\begin{equation}
\label{STconj}
\kK_n(q,t)=\kK_{n-1}(q,t)+\kK_{n-2}(q,t)q^{2n}t^{2n-2}+\kK_{n-3}(q,t)q^{2n+4}t^{2n-1}.\end{equation}
\end{conjecture}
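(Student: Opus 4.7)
The recursion \ref{STconj} has the shape of a Markov-style decomposition in which three successive terms appear with grading shifts matching the Jones polynomial contribution of full twists on progressively fewer strands. This suggests attacking the statement categorically at the level of complexes of Bar-Natan cobordism, rather than working directly at the level of Khovanov homology groups.

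My plan is as follows. First, I would use Rozansky's result that the infinite full twist on $n$ strands categorifies the Jones--Wenzl projector $p_n$, together with a categorical Jones--Wenzl recursion expressing $p_n$, up to chain homotopy, as $p_{n-1} \otimes \mathbf{1}$ plus a correction built from a cup--cap cobordism and a shifted copy of $p_{n-1}$. Taking Markov traces of both sides should yield a recursion on $\Kh(T(n,\infty))$ relating it to $\Kh(T(n-1,\infty))$, $\Kh(T(n-2,\infty))$, and $\Kh(T(n-3,\infty))$ with explicit grading shifts; one then compares these shifts against the $q^{2n}t^{2n-2}$ and $q^{2n+4}t^{2n-1}$ appearing in \ref{STconj}. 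The odd $t$-exponent in the second correction is particularly suggestive, pointing to an odd-degree saddle cobordism in the projector recursion, and tying it naturally to a $\xi$-type generator in the algebraic model $\Kh_{alg}$.

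Second, to pass from the stable statement to $\kK_n$, the Poincar\'e polynomial of the finite $(n,n+1)$-torus knot, I would combine Rozansky's quantitative convergence estimate for the finite full twist with Sto\v si\'c's stability estimate in homological degree. Since $T(n,n+1)$ sits just past the threshold where the finite and stable homologies coincide in any given homological grading, the discrepancy between $\kK_n$ and the stable Poincar\'e polynomial is concentrated in finitely many top homological degrees; these correction terms can in principle be computed directly from the structure of the projector, and the hope is that they cancel simultaneously across all four sides of \ref{STconj} for the same structural reason.

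The main obstacle I expect to face is showing that the categorified Jones--Wenzl recursion, which holds only up to chain homotopy at the level of projectors, descends to an honest polynomial identity after Markov closure at finite level: the homotopies can contribute surviving terms whose Euler characteristic must be shown to vanish. A parallel technical difficulty is that any long exact sequence in Khovanov homology produced by the skein triangle need not split as a direct sum, whereas the clean additive form of \ref{STconj} demands precisely such a splitting. Both difficulties could potentially be handled by a parity argument, if the three summands on the right-hand side can be shown to occupy disjoint ranges of the $(q,t)$-bigrading so that any connecting map vanishes for degree reasons. If this categorical approach resists, a fallback would be to resolve a carefully chosen set of three crossings in the braid word $(\sigma_1\cdots\sigma_{n-1})^{n+1}$ and match the resulting cube of resolutions, via Markov destabilization, with the three smaller torus knots; this would be more concrete but substantially more computational, and would likely still reduce to the same splitting question at the end.
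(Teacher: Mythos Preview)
The statement you are addressing is a \emph{conjecture} in the paper, attributed to Shumakovitch and Turner; the paper does not prove it and does not claim to. There is therefore no ``paper's own proof'' to compare against. What the paper actually does with \eqref{STconj} is different in kind from what you propose: it defines a combinatorial state sum $K_n(q,t)$ over certain binary sequences, verifies by an elementary last-block argument that $K_n$ satisfies the same recursion with matching initial conditions, and then shows that $\lim_{n\to\infty} K_n(q,t)$ coincides with the Hilbert series of the algebra in Conjecture~\ref{conj:presentation}. In other words, the paper uses the conjectural recursion as motivation for a combinatorial model of the \emph{stable} homology, rather than attempting to establish the recursion for actual Khovanov homology of $T(n,n+1)$.

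Your proposal, by contrast, is a sketch of an attack on the open conjecture itself. The strategy via categorified Jones--Wenzl projectors is reasonable in spirit, and you have correctly located the real difficulties: the projector recursions of Rozansky and Cooper--Krushkal produce convolutions (iterated cones), not direct sums, so extracting an additive identity of Poincar\'e polynomials requires a splitting argument that is far from automatic; and the passage from the stable object $\Kh(T(n,\infty))$ back to the finite knot $T(n,n+1)$ is exactly where the known stability estimates stop giving you control. The grading-disjointness parity argument you suggest is the natural thing to try, but note that the three summands in \eqref{STconj} do overlap in $q$-degree for small $n$, so any such argument would have to be more refined than a coarse bigrading separation. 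None of this is a flaw in your reasoning --- you have identified the right obstacles --- but you should be aware that you are proposing a program for an open problem, not reconstructing an existing proof.
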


We construct a combinatorial model for this recursion relation. 
Consider length \(n\)  sequences of 0's and 1's with no blocks of the form 1111 anywhere
and no blocks of the form 111 except possibly at the beginning. Such sequences are split (outside the beginning)
into 1's and 11's separated by blocks of 0's. 
\begin{example}
For $n=3$ all 8 sequences are admissible. For $n=4$ we have 14 sequences: $1111$ and $0111$ are forbidden.
\end{example}

 We weight such sequences by a product of terms corresponding to blocks of \(1\)'s appearing in the sequence. The weights are as follows: 

1) 111 in the beginning: $q^{12}t^{5}$;

2) 1 at position $n$ (first digit has position 0): $q^{2n+2}t^{2n}$;

3) 11 starting at position $n$: $q^{2n+8}t^{2n+3}$.

Let \(K_n\) be the  weighted state sum for length $n$ sequences; that is, the sum of the weights for all such sequences.

\begin{lemma}
\(K_n\) satisfies the recursion relation~\eqref{STconj} and agrees with \(\kK_n\) for \(n=1,2,3\).
\end{lemma}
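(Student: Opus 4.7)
The plan is a direct combinatorial analysis: prove the recursion by classifying admissible sequences according to their final block of $1$'s, then verify the three base cases by comparing against known Khovanov polynomials of the torus knots $T(1,2), T(2,3), T(3,4)$.

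For the recursion with $n \geq 4$, I would partition the admissible length-$n$ sequences into three classes according to the behavior at the end of the sequence: (i) the sequence ends in $0$; (ii) the sequence ends in a single isolated $1$ at position $n-1$, meaning position $n-2$ equals $0$; (iii) the sequence ends in an $11$-block at positions $n-2, n-1$ with position $n-3$ equal to $0$. Since $111$ is forbidden except at the very beginning (positions $0,1,2$) and $n \geq 4$ places the positions $n-3, n-2, n-1$ strictly after position $2$, a terminal $111$ is impossible, so these three cases are mutually exclusive and exhaust all admissible sequences.

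In each case the remaining prefix is an arbitrary admissible sequence of length $n-1$, $n-2$, or $n-3$ respectively, because appending ``$0$'', ``$01$'', or ``$011$'' to any admissible prefix never creates a new occurrence of $111$ (the prefix's last block of $1$'s is followed by a $0$ in each of the last two cases). The newly added block contributes weight $1$ in case (i), weight $q^{2(n-1)+2}t^{2(n-1)} = q^{2n}t^{2n-2}$ in case (ii), and weight $q^{2(n-2)+8}t^{2(n-2)+3} = q^{2n+4}t^{2n-1}$ in case (iii). Summing over the three cases yields the desired recursion~\eqref{STconj}.

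For the base cases I would enumerate directly: for $n \leq 3$ every binary string is admissible, so $K_1 = 1 + q^2$, $K_2 = 1 + q^2 + q^4 t^2 + q^8 t^3$, and $K_3 = 1 + q^2 + q^4 t^2 + q^6 t^4 + q^8 t^3 + q^8 t^4 + q^{10} t^5 + q^{12} t^5$, where the final $q^{12} t^5$ term comes from the initial-$111$ sequence. These agree with the (appropriately normalized) Khovanov polynomials of the unknot, the trefoil $T(2,3)$, and $T(3,4)=8_{19}$, which can be looked up in \cite{katlas}. The only subtle point is to verify that for $n \geq 4$ sequences whose prefix begins with an initial $111$ block are correctly counted inside cases (i)--(iii); this is clear because the initial $111$ occupies positions $0,1,2$ and so never overlaps with the final three positions, so it is simply absorbed into the prefix weight in each case.
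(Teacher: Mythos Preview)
Your proof is correct and follows exactly the paper's approach: split the admissible sequences according to whether they end in $0$, $01$, or $011$, read off the three contributions, and check the small cases. One tiny wording slip: for $n=4,5$ the positions $n-3,n-2,n-1$ are not literally ``strictly after position $2$,'' but your intended point---that for $n\ge 4$ a terminal $111$ is not the initial block at positions $0,1,2$ and is therefore forbidden---is of course correct.
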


\begin{proof}
Let us check the recursion relation. The set of length \(n\) sequences ending with 0 contribute  $K_{n-1}$ to \(K_n\).
The sequences ending  with 01 contribute $K_{n-2}(q,t)q^{2n}t^{2n-2}$, and 
the  sequences ending  with 011 contribute $K_{n-3}(q,t)q^{2n+4}t^{2n-1}$.
 The values of $K_n$ for $n=1,2,3$ are easily checked.
\end{proof}

Let us write the formula for the limit $K(q,t)=\lim_{n\to\infty}K_n(q,t)$.

\begin{theorem}
\begin{equation}
\label{fermion}
K(q,t)=\sum_{p=0}^{\infty}q^{2p^2}t^{2p(p-1)}(1+q^{8p+12}t^{8p+5})\frac{(1+q^6t^3)(1+q^8t^5)\ldots (1+q^{2p+4}t^{2p+1})}{ (1-q^2t^2)(1-q^4t^4)\ldots (1-q^{2p}t^{2p})}.
\end{equation}
\end{theorem}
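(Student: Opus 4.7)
The plan is to identify $K(q,t) = \lim_{n\to\infty} K_n(q,t)$ with the weighted generating function of \emph{infinite} admissible $0,1$-sequences (those with only finitely many $1$'s), and then evaluate it by a direct multiplicative decomposition.

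First, I would verify convergence. The recursion (\ref{STconj}) gives
$K_n - K_{n-1} = q^{2n}t^{2n-2}K_{n-2}+q^{2n+4}t^{2n-1}K_{n-3}$,
whose minimum joint $(q,t)$-degree tends to infinity, so $K_n$ is Cauchy in the $(q,t)$-adic topology on $\ZZ[[q,t]]$. The limit $K(q,t)$ therefore exists and equals the weighted state sum over infinite sequences in $\{0,1\}^{\NN}$ with only finitely many $1$'s, subject to the same constraints (no $\mathtt{1111}$ anywhere, no $\mathtt{111}$ except at the start) and inheriting the same block weights.

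Next, I would canonically decompose each such sequence as: (i) an optional initial block $\mathtt{111}$; followed by (ii) $p \ge 0$ maximal $1$-runs of length $1$ or $2$; separated by (iii) positive numbers of $0$'s (with one compulsory $0$ right after the initial $\mathtt{111}$ when present). Then I would assemble the weight factor by factor. The \emph{minimal} configuration (no initial $\mathtt{111}$, all $p$ runs single, no extra $0$'s) places a $1$ at each of the positions $0,2,\ldots,2(p-1)$, contributing $\prod_{k=0}^{p-1}q^{4k+2}t^{4k}=q^{2p^2}t^{2p(p-1)}$. Prepending $\mathtt{111}$ shifts all $p$ runs by $4$ positions and adds the factor $q^{12}t^5$, for an overall extra factor $q^{12}t^5\cdot(q^8t^8)^p=q^{8p+12}t^{8p+5}$. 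Upgrading the $k$-th run from $\mathtt{1}$ to $\mathtt{11}$ multiplies its own local weight by $q^6t^3$ and shifts the later $p-k$ runs by one position each (factor $(q^2t^2)^{p-k}$), totalling $q^{2(p-k)+6}t^{2(p-k)+3}$; independent choices across the $p$ runs contribute $\prod_{j=1}^p(1+q^{2j+4}t^{2j+1})$ after setting $j=p-k+1$. Finally, inserting an extra $0$ in the gap before the $k$-th run shifts the last $p-k+1$ runs uniformly, multiplying the weight by $(q^2t^2)^{p-k+1}$; geometrically summing at each gap yields $\prod_{j=1}^p(1-q^{2j}t^{2j})^{-1}$. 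Multiplying all four factors and summing over $p$ yields
$$K(q,t)=\sum_{p=0}^{\infty} q^{2p^2}t^{2p(p-1)}(1+q^{8p+12}t^{8p+5})\prod_{j=1}^{p}\frac{1+q^{2j+4}t^{2j+1}}{1-q^{2j}t^{2j}},$$
which is exactly (\ref{fermion}).

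The main step requiring care is checking that the four operations (prepending $\mathtt{111}$, upgrading a run at each position $k$, and inserting extra zeros at each gap) commute as operations on configurations and act multiplicatively on the weight; the point is that every operation \emph{shifts a specific tail} of runs by a fixed amount, and shifts compose additively, so the weight picks up each factor independently. This is a routine positional bookkeeping check but is the only real subtlety in the argument.
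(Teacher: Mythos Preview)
Your proof is correct and follows essentially the same approach as the paper: both pass to the infinite-sequence state sum, stratify by the number $p$ of $1$-blocks, and handle the optional initial $\mathtt{111}$ via the multiplicative factor $(1+q^{8p+12}t^{8p+5})$. The only cosmetic difference is that the paper derives the $p$-block contribution $U_p$ through a one-step recursion $U_{p+1}/U_p$ obtained by peeling off the leftmost block, whereas you obtain the same product directly by writing down the minimal configuration and tracking the independent multiplicative effects of run-upgrades and zero-insertions.
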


\begin{proof}
Let $U_p(q,t)$ be the state sum giving by summing over all sequences  with $p$ blocks of units, none of which are of length \(3\). 
A sequence with $p+1$ blocks can be one of the following:

1) Starting with 10  at position $k$. This contributes  $q^{2k+2}t^{2k}\cdot (q^2t^2)^{p(k+2)}U_{p}$ to \(U_{p+1}\). 
If we sum over all $k$, we get $$\frac{q^{4p+2}t^{4p}}{ 1-q^{2(p+1)}t^{2(p+1)}}U_p.$$

2) Starting with 110  at position $k$. This contributes $q^{2k+8}t^{2k+3}\cdot (q^2t^2)^{p(k+3)}U_{p}$ to \(U_{p+1}\). 
If we sum over all $k$, we get $$\frac{q^{6p+8}t^{6p+3}}{ 1-q^{2(p+1)}t^{2(p+1)}}U_p.$$
Thus  $$U_{p+1} = \frac{q^{4p+2}t^{4p}(1+q^{2p+6}t^{2p+3})}{ 1-q^{2(p+1)}t^{2(p+1)}}U_p,$$
from which we deduce that 
$$U_{p}(q,t)=q^{2p^2}t^{2p(p-1)}\frac{(1+q^6t^3)(1+q^8t^5)\ldots (1+q^{2p+4}t^{2p+1})}{ (1-q^2t^2)(1-q^4t^4)\ldots (1-q^{2p}t^{2p})}.$$
Let $V_p(q,t)$ denote the state sum where we allow sequences beginning with 111. 
Then $$V_p=U_p+q^{12}t^{5}(q^2t^2)^{4p}U_{p}=(1+q^{8p+12}t^{8p+5})U_p.$$
\end{proof}

We verify directly that the Euler characteristic of \(K(q,t)\) agrees with the stable Jones polynomial of \(T(\infty,\infty)\):

\begin{lemma}
$$K(q,-1)=\frac{1}{ 1-q^2}.$$
\end{lemma}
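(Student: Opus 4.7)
The plan is to evaluate the right-hand side of (\ref{fermion}) directly at $t=-1$. Since $2p(p-1)$ is even, $t^{2p(p-1)}=1$, and since $8p+5$ is odd, $t^{8p+5}=-1$, so the factor $(1+q^{8p+12}t^{8p+5})$ becomes $1-q^{8p+12}$. The $p$ factors $(1+q^{2k+4}t^{2k+1})$ in the numerator become $(1-q^{2k+4})$ for $k=1,\dots,p$, and the denominator factors $(1-q^{2k}t^{2k})$ become $(1-q^{2k})$. Thus $K(q,-1)=\sum_{p\ge 0}q^{2p^2}(1-q^{8p+12})\frac{(1-q^6)(1-q^8)\cdots(1-q^{2p+4})}{(1-q^2)(1-q^4)\cdots(1-q^{2p})}.$

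Next, I would recognize the ratio of products as a Gaussian binomial coefficient. Setting $u=q^2$ and using $(u;u)_n:=(1-u)(1-u^2)\cdots(1-u^n)$, one has $\frac{(1-q^6)\cdots(1-q^{2p+4})}{(1-q^2)\cdots(1-q^{2p})}=\frac{(u;u)_{p+2}}{(u;u)_2(u;u)_p}=\binom{p+2}{2}_u$. The identity to prove becomes $\sum_{p\ge 0}u^{p^2}(1-u^{4p+6})\binom{p+2}{2}_u=\tfrac{1}{1-u}$.

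The third step is to split this into two sums and reindex the second. The identity $2p^2+8p+12=2(p+2)^2+4$ together with the vanishing $\binom{r}{2}_u=0$ for $r\in\{0,1\}$ allows us to reindex the subtracted sum by $p\mapsto p+2$ while still starting from $p=0$, giving $K(q,-1)=\sum_{p\ge 0}u^{p^2}\bigl[\binom{p+2}{2}_u-u^2\binom{p}{2}_u\bigr]$. Two applications of $q$-Pascal (namely $\binom{p+2}{2}_u=\binom{p+1}{1}_u+u^2\binom{p+1}{2}_u$ and $\binom{p+1}{2}_u-\binom{p}{2}_u=u^{p-1}\binom{p}{1}_u$) collapse the bracket to $\frac{1-u^{p+1}}{1-u}+\frac{u^{p+1}(1-u^p)}{1-u}=\frac{1-u^{2p+1}}{1-u}$.

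Finally, I would observe the miraculous telescope: $u^{p^2}(1-u^{2p+1})=u^{p^2}-u^{(p+1)^2}$, whence $K(q,-1)=\tfrac{1}{1-u}\sum_{p\ge 0}(u^{p^2}-u^{(p+1)^2})=\tfrac{1}{1-u}=\tfrac{1}{1-q^2}$ as a formal power series. The only nontrivial step is spotting the reindexing that sets up the telescoping after the $q$-Pascal simplification; once one writes the summand as $\binom{p+2}{2}_u-u^2\binom{p}{2}_u$, the rest is mechanical. A sanity check on low-order terms ($p=0,1,2$ give $1+q^2+q^4+q^6+q^8+q^{10}+\dotsb$ with the spurious $-q^{12}$ from $p=0$ cancelled by $+2q^{12}$ from $p=2$) confirms the cancellation pattern predicted by the telescope.
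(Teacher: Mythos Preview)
Your proof is correct. Both you and the paper start identically: evaluate at $t=-1$, note that the product $\frac{(1-q^6)\cdots(1-q^{2p+4})}{(1-q^2)\cdots(1-q^{2p})}$ collapses to $\frac{(1-q^{2p+2})(1-q^{2p+4})}{(1-q^2)(1-q^4)}=\binom{p+2}{2}_u$, and then reduce the claim to showing that $\sum_{p\ge 0}u^{p^2}(1-u^{p+1})(1-u^{p+2})(1-u^{4p+6})=1-u^2$.

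The difference is only in how the telescoping is organized. The paper simply expands the product of three binomials into eight monomials $u^{p^2}$, $-u^{p^2+p+1}$, $\ldots$, $-u^{(p+3)^2}$ and asserts that everything cancels; checking this requires spotting four separate telescoping pairs with lags $1$, $2$, $2$, and $3$. You instead absorb the factor $(1-u^{4p+6})$ first via the reindexing $p\mapsto p+2$, reducing the summand to $\binom{p+2}{2}_u-u^2\binom{p}{2}_u$, and then two applications of $q$-Pascal yield $\frac{1-u^{2p+1}}{1-u}=\frac{u^{p^2}-u^{(p+1)^2}}{u^{p^2}(1-u)}$, a single clean telescope. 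Your route is more explicit and arguably tidier, but the underlying mechanism is the same.
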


\begin{proof}
Remark that 
$$U_p(q,-1)=q^{2p^2}\frac{(1-q^6)(1-q^8)\ldots (1-q^{2p+4})}{ (1-q^2)(1-q^4)\ldots (1-q^{2p})}=q^{2p^2}\frac{(1-q^{2p+2})(1-q^{2p+4})}{ (1-q^2)(1-q^4)},$$
$$V_p(q,-1)=(1-q^{8p+12})U_p(q,-1).$$
Therefore we have to prove that
\begin{equation}
\sum_{p=0}^{\infty}q^{2p^2}(1-q^{2p+2})(1-q^{2p+4})(1-q^{8p+12})=1-q^4.
\end{equation}
This follows from the direct expansion of the left hand side: all terms will cancel out
except $1$ and $-q^4$.
\end{proof}

Comparing  the ``fermionic" formula \eqref{fermion}) with the ``bosonic'' formula \eqref{pn} in the limit \(n\to \infty\) suggests the following identity.
The $a$-grading from the HOMFLY homology can be traced on both sides.

\begin{conjecture}({\it ``Khovanov-Rogers-Ramanujan identity''})
Let
$$A(a,q,t)=\sum_{p=0}^{\infty}q^{2 p^2}t^{2 p (p - 1)}(1 + a^2 q^{8 p + 8}t^{8 p + 5})
\prod_{j=1}^{p}\frac{(1 + a^2 q^{2 j}t^{2 j + 1})}{ (1 - q^{2 j}t^{2 j})}.$$
and
$$B(a,q,t)=\frac{1}{ \prod_{k=1}^{\infty}(1-q^{2k}t^{2k-2})}\sum_{p=0}^{\infty}(-1)^{p}\prod_{k=1}^{p}\frac{(1-q^{2k}t^{2k-2})}{(1-q^{2k}t^{2k})}\times $$
$$\times\prod_{k=3p+1}^{\infty}(1 + a^2 q^{2 k + 2} t^{2 k + 1})\prod_{k=1}^{2p-1}(1 + a^2 q^{2 k - 2} t^{2 k - 1})\times$$
$$[q^{5p^2+p}t^{5p^2-3p}(1+\chi_p^+ a^2q^{6p}t^{6p-1})(1+a^{2}q^{6p + 2}t^{6p+1})+$$
$$a^{2}q^{5p^2+7p-2}t^{5p^2+3p-1}(1+a^2q^{6p+2}t^{6p+1})(1-q^{2p+2}t^{2p})-$$
$$q^{5p^2+9p+4}t^{5p^2+5p}(1+a^2q^{2p-2} t^{2p+1})(1+\chi_p^+ a^{2}q^{4p-2}t^{4p-1})].$$
Then 
\begin{equation}
\label{krr}
A(a,q,t)=B(a,q,t).
\end{equation}
\end{conjecture}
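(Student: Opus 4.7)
The plan is to treat the identity $A(a,q,t)=B(a,q,t)$ as an $a$-refinement of the generalized Rogers--Ramanujan identity (\ref{rr}), and to prove it by anchoring at the known $a=0$ case and lifting via a Bailey-chain argument. At $a=0$, all factors of the form $(1+a^2\cdots)$ collapse to $1$, so $A(0,q,t)$ reduces to the fermionic side $\sum_p q^{2p^2}t^{2p(p-1)}\prod_{j=1}^p(1-q^{2j}t^{2j})^{-1}$ of (\ref{rr}), and $B(0,q,t)$ reduces to the bosonic side; their equality is precisely the Feigin--Stoyanovsky identity, equivalently the statement $\lim_n K_n(q,t)=\lim_n P_n(q,t)$ proved earlier in the paper. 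This furnishes the base case.

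For the general identity, I would expand both sides as formal power series in $a^2$, writing $A=\sum_{k\ge 0} a^{2k}A_k(q,t)$ and $B=\sum_{k\ge 0} a^{2k}B_k(q,t)$, and aim to prove $A_k=B_k$ for each $k$. The fermionic side has the standard shape of a Bailey sum -- theta-like weights $q^{2p^2}t^{2p(p-1)}$ multiplied by a ratio of $q$-Pochhammer products -- and the prefactor $(1+a^2q^{8p+8}t^{8p+5})$ splits it naturally into two such sums, so each $A_k$ has the shape of a classical Rogers--Ramanujan series in the two formal variables $(q^2,q^2t^2)$. I would look for a Bailey pair $(\alpha_p,\beta_p)$ in these variables whose companion sum reproduces $A$, and then apply the Bailey transform to obtain $B$; the three-summand structure of $B$ (the three terms per $p$ modulated by the indicator $\chi_p^+$) is consistent with a three-term Bailey-lattice iteration.

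As a complementary strategy, I would seek a representation-theoretic interpretation in the spirit of Feigin--Stoyanovsky. Both $A$ and $B$ ought to be the Poincar\'e series of the same object, conjecturally the stable HOMFLY--PT homology of $T(\infty,\infty)$ with $a$ recording the HOMFLY grading. The $a=0$ case is the integrable level-one $\widehat{\Sl}(2)$ module, computed once by an admissible-monomial basis (fermionic) and once by localization on an affine flag variety (bosonic). Upgrading the ambient ring from polynomial to polynomial-plus-exterior variables, as in the Koszul model of Conjecture~\ref{conj:main}, should deform both computations in parallel and yield $A=B$ as the statement that the two answers continue to agree.

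The main obstacle is the asymmetry between the two sides. The fermionic $A$ has a clean two-term structure at each $p$, whereas the bosonic $B$ has six effective terms per summand (three products modulated by $\chi_p^+$ and by products spanning different ranges). Matching them requires identifying an explicit Bailey pair whose transform produces both the indicator $\chi_p^+$ and the cross factors $(1+a^2q^{6p+2}t^{6p+1})$ and $(1+a^2q^{4p-2}t^{4p-1})$. The indicator $\chi_p^+$ typically arises from splitting a Bailey sum at the boundary $p=0$, which suggests a Bressoud--Zeilberger-type contour manipulation, but historically this is the hard step in Rogers--Ramanujan-type identities; in the absence of a fully developed $a$-deformation of the Feigin--Stoyanovsky framework, this combinatorial route is where I expect the real work to lie.
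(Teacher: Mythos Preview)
The statement you are attempting to prove is explicitly labeled a \emph{conjecture} in the paper, and the paper does not prove it. Immediately after stating it, the authors write: ``Using a computer, we have checked that this identity holds up through terms of order $q^{100}$.'' That is the full extent of the evidence offered. So there is no proof in the paper for you to be compared against.

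As for your proposal itself: it is a sketch of a strategy, not a proof. You correctly identify that the $a=0$ specialization reduces to the Feigin--Stoyanovsky identity (\ref{rr}), which is known. But the two routes you suggest for the general case are both programmatic rather than executed. The Bailey-chain approach requires actually exhibiting a Bailey pair (or lattice of pairs) whose transform produces the six-term structure of $B$ with the indicator $\chi_p^+$; you acknowledge this is ``where the real work lies'' and do not do it. The representation-theoretic approach requires an $a$-deformation of the Feigin--Stoyanovsky setup that does not currently exist in the literature, as you also note. Neither route, as written, constitutes a proof --- and given that the authors themselves left this as a conjecture supported only by computer checks, that is not surprising. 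If you can actually carry out either of your strategies, that would be new mathematics going beyond the paper.
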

Using a computer, we have checked that this identity holds up through terms of order \(q^{100}\). 
%{(\bf Jake:  without a proof of conj 26, this is beginning to seem pretty speculative.)}

The following theorem provides some evidence in support of Conjectures \ref{gens} and \ref{rels}.

\begin{theorem}
\label{shu}
 The Hilbert series of the algebra generated by \(\mu_n\) (\(n\geq 0\)) and \(x_i\) (\(i \geq 0\)) and satisfying the relations in equation  \eqref{voarels} is $K(q,t)$.
\end{theorem}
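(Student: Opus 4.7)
The plan is to construct a basis for the algebra $A = \QQ\langle x_i, \mu_j\rangle / R$ (with $R$ the ideal generated by the coefficients of the relations \eqref{voarels}) in weight-preserving bijection with the admissible $0$-$1$ sequences used to define $K(q,t)$. Each admissible sequence decomposes uniquely into blocks ``$1$ at position $n$,'' ``$11$ starting at $n$,'' and possibly ``$111$ at the beginning,'' which we send to the monomials $x_n$, $\mu_n$, and $x_0\mu_1$ respectively. The bidegrees $q^{2n+2}t^{2n}$, $q^{2n+8}t^{2n+3}$, $q^{12}t^5$ of these three pieces coincide with the block weights defined for the state sum, so once the admissible monomials are shown to form a basis of $A$, its Hilbert series is exactly $K(q,t)$.

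For \emph{spanning}, I would pick a monomial order (say, the one used by Feigin--Stoyanovsky for the lower level) and use the coefficients of the four relations in \eqref{voarels} as rewriting rules. The coefficients of $z^n$ in $x(z)^2=0$ are the generalized Rogers--Ramanujan relations treated in \cite{fs}, whose leading terms $x_k^2$ (for $n=2k$) and $x_k x_{k+1}$ (for $n=2k+1$) reduce any $x$-only monomial to standard RR form. The coefficients of $z^n$ in $x(z)\mu(z)=0$ together with those of $\ddot{x}(z)\mu(z)-\dot{x}(z)\dot{\mu}(z)=0$ form, in each bidegree $q^{2n+14}t^{2n+7}$, a rank-two system among the $n+3$ monomials $\{x_a\mu_b : a+b=n+2\}$; a direct calculation shows its quotient is spanned precisely by the admissible mixed monomials (including the boundary identity $x_0\mu_1 = -x_1\mu_0$ standing for the ``$111$ at beginning'' block). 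Finally, the coefficients of $\mu(z)\dot{\mu}(z)=0$ eliminate overlapping or adjacent $\mu\mu$ pairs, matching the ban on $1111$ patterns.

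For \emph{linear independence}, the bottom level ($\mu$-degree zero) is $\QQ[x_i]/(x(z)^2)$, whose Hilbert series is the generalized Rogers--Ramanujan series \eqref{rr} by \cite{fs}, so the $x$-admissible monomials are independent. For higher $\mu$-degree, I would count the number of relations in each bidegree and show that it equals the number of non-admissible monomials; combined with the spanning statement this forces the Hilbert series to agree with the admissible count. A more conceptual route would invoke Feigin's Virasoro identification \cite{feigin} (quoted in the excerpt), which describes each graded component $R_{(1)}[j]$ as a Virasoro Verma quotient $M_{\lambda_j}$ with explicit character; compatibility of Feigin's ideal $J$ with our ideal $R$ must be checked, but once that is in place each $\mu$-level dimension is pinned down.

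The principal obstacle is establishing linear independence at $\mu$-degree $\geq 1$. Spanning reduces to a mechanical (if lengthy) application of the four relation families, but showing that no hidden identities remain among admissible monomials requires genuine structural input. Either a Gr\"obner basis argument (for which the syzygy noted in the remark after Conjecture~\ref{rels} between $x\mu$ and $\ddot{x}\mu - \dot{x}\dot{\mu}$ must be systematically controlled) or the Virasoro route through Feigin's work seems necessary. Computer verification in Singular for $n\leq 7$ provides strong support but not a complete proof.
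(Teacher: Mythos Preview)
Your bijection and spanning argument are exactly what the paper does: it assigns $1$ at position $n$ to $x_n$, $11$ starting at position $n$ to $\mu_n$, and $111$ at the start to $x_0\mu_1$, then lists the forbidden quadratic products
\[
x_i^2,\ x_ix_{i+1},\ x_i\mu_i,\ x_{i+1}\mu_i,\ \mu_i x_{i+2},\ x_i\mu_{i+1}\ (i>0),\ \mu_i\mu_{i+1},\ \mu_i\mu_{i+2}
\]
and observes that each is (the leading term of) a coefficient of one of the four series relations in \eqref{voarels}. That is the entire argument in the paper.

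You are right that linear independence is the real issue, and you should know that the paper does \emph{not} address it either. The published proof establishes only that the admissible monomials span, i.e.\ that the Hilbert series is coefficientwise $\le K(q,t)$; no argument is given that the relations do not further collapse the admissible monomials. So your ``principal obstacle'' is a genuine gap shared by the paper, not something you have overlooked. Your proposed routes through a Gr\"obner basis computation or through Feigin's Virasoro identification go beyond what the paper offers; its implicit justification seems to rest on the {\tt Singular} verification for $n\le 7$ mentioned after Conjecture~\ref{rels} and on consistency with the Shumakovitch--Turner recursion, rather than on a direct independence proof.
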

 
\begin{proof}
The elements \(\mu_n\)  have  grading 
$$\deg(\mu_n)=q^{2n+8}t^{2n+3}.$$
Let us return to our combinatorial model:

1) 1 at position $n$ corresponds to $x_n$.

2) 11 starting at position $n$ corresponds to $\mu_n$.

3) 111 in the beginning corresponds to $x_0\mu_1$.

We have to check that we can eliminate the following products using the relations:

$$ x_ix_{i+1},\quad x_i\mu_{i+1}\quad (i>0),\quad  \mu_{i}x_{i+2},\quad \mu_i\mu_{i+2},$$
$$x_i^2,\quad x_i\mu_i,\quad x_{i+1}\mu_i,\quad \mu_{i}\mu_{i+1}.$$
We can eliminate $x_i^2$ and $x_ix_{i+1}$ using the equation $x(z)^2=0$;
$\mu_{i}\mu_{i+1}$ and $\mu_i\mu_{i+2}$ using the equation  $\mu(t)\dot{\mu}(z)=0$.
Finally, we can eliminate $x_{i-1}\mu_{i+1}, x_{i}\mu_i, x_{i+1}\mu_i, x_{i+2}\mu_{i}$
using two remaining equations $$x(z)\mu(z)=\ddot{x}(z)\mu(z)-\dot{x}(z)\dot{\mu}(z)=0.$$
\end{proof}

\section{Reduced homology}
\label{secred}

In this section we briefly review the structure of the reduced stable homology. The computations in \cite{GORS} suggest that the construction and the differential should be similar to the  unreduced case, except that  $x_0$ and $\xi_0$ are omitted. To be  specific, 
consider the polynomial ring in  even variables $x_1,\ldots, x_{n-1}$ 
and equal number of odd variables $\xi_1,\ldots, \xi_{n-1}$ 
bigraded as
$$\deg(x_k)=q^{2k+2}t^{2k},\quad \deg(\xi_i)=q^{2i+4}t^{2i+1}.$$
The differential $d_2$ is given by the equation
\begin{equation}
%\label{d2}
d_2(\xi_m)=\sum_{k=1}^{m-1}x_{k}x_{m-k}.
\end{equation}

\begin{conjecture}
The stable reduced Khovanov homology of $T(n,\infty)$ is isomorphic to the homology of 
$\mathbb{C}[x_1\ldots,x_{n-1}]\otimes \Lambda^*[\xi_1,\ldots,\xi_{n-1}]$ with respect to $d_2$.
\end{conjecture}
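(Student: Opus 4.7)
The plan is to parallel the strategy envisioned for the unreduced Conjecture~\ref{conj:main}, exploiting the fact that the reduced and unreduced Koszul complexes are tightly linked. Writing $A_{un}$ and $A_{red}$ for the two complexes, one has $A_{un} \cong A_{red} \otimes \CC[x_0] \otimes \Lambda^*[\xi_0]$ as bigraded vector spaces, and the projection $A_{un}\twoheadrightarrow A_{red}$ sending $x_0, \xi_0 \mapsto 0$ is a chain map, since $d_2(\xi_0) = x_0^2$ vanishes modulo $x_0$. Decomposing $d_2 = d_{red} + d_{x_0}$, where $d_{x_0}(\xi_0) = x_0^2$ and $d_{x_0}(\xi_k) = 2 x_0 x_k$ for $k\geq 1$, one obtains a spectral sequence filtered by $x_0$-degree whose $E_1$ page is the reduced algebraic homology tensored with $\CC[x_0]\otimes\Lambda^*[\xi_0]$ and which converges to $\Kh_{alg}(n,\infty)$. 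As a first check I would verify that, at the level of Poincar\'e series, this spectral sequence is compatible with the bosonic formula~\eqref{pn}, so that assuming Conjecture~\ref{conj:main} yields a consistent candidate formula for the reduced homology.

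On the topological side, reduced and unreduced Khovanov homologies of a knot are related through the module structure over a chosen basepoint, and Sto\v si\'c's stability argument adapts directly to the reduced setting to produce a well-defined $\widetilde{\Kh}(T(n,\infty))$. In Rozansky's framework this should coincide with the reduced $n$-colored Khovanov homology of the unknot. The goal is then to match the algebraic spectral sequence constructed above with the topological reduced/unreduced comparison on torus knots; if one assumes Conjecture~\ref{conj:main} together with naturality of the reduction, the reduced conjecture would follow.

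A more direct alternative is to work within the HOMFLY-PT / rational Cherednik algebra framework of~\cite{GORS}. Reduced HOMFLY-PT homology of $T(n,m)$ corresponds to an appropriate quotient of finite-dimensional Cherednik modules, and computing the $N=2$ representation-theoretic differentials in the $m\to\infty$ limit should produce precisely $d_2$ on $A_{red}$, by the same type of calculation that motivates the unreduced conjecture. In parallel, one would construct analogues of the generators $\mu_s$ and relations~\eqref{voarels} in $A_{red}$, verify the reduced versions of Conjectures~\ref{gens} and~\ref{rels} via \texttt{Singular} for small $n$, and adapt Feigin's Virasoro-module description to the reduced Lepowsky-Primc setting.

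The main obstacle is the same one that blocks the unreduced conjecture: without an intrinsic identification of Rozansky's differentials on the infinite torus braid, or equivalently of the spectral sequence from HOMFLY-PT to $\Sl(2)$ homology, with the specific quadratic differential $d_2$, the matching step relies on numerical agreement rather than a structural argument. The most realistic near-term path is therefore to prove the conditional statement that Conjecture~\ref{conj:main} implies its reduced counterpart, using the $x_0$-filtration spectral sequence, and to accumulate direct computational evidence in the stable range for small $n$.
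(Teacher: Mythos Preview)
The statement you are addressing is a \emph{conjecture} in the paper, not a theorem; the paper offers no proof, only supporting evidence (an example for $n=4$, a structural lemma, and a conjectural Poincar\'e series). So there is no ``paper's own proof'' to compare against, and your proposal is really a research outline rather than a proof --- as you yourself acknowledge in the final paragraph when you identify the same obstruction that leaves Conjecture~\ref{conj:main} open.

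That said, your structural link between the reduced and unreduced complexes differs from what the paper actually establishes. You filter the unreduced complex $A_{un}(n)$ by $x_0$--degree to get a spectral sequence with $E_1 \cong \Kh_{alg}^{red}(n,\infty)\otimes \CC[x_0]\otimes \Lambda^*[\xi_0]$ converging to $\Kh_{alg}(n,\infty)$. The paper instead observes that in the reduced complex $d_2(\xi_1)=0$ and $x_{n-1}$ never appears in any $d_2(\xi_m)$, so both are free; after stripping them off, the substitution $x_i\mapsto x_{i-1}$, $\xi_i\mapsto \xi_{i-2}$ identifies the remaining complex with the \emph{unreduced} complex for $n-2$, giving an exact isomorphism
\[
\Kh_{alg}^{red}(n,\infty)\;\cong\;\Kh_{alg}(n-2,\infty)\otimes \CC[x_{n-1},\xi_1].
\]
This is sharper than your spectral sequence: it is an identity of homologies rather than an $E_1$ page, and it immediately transfers all structural results (generators $\mu_s$, relations, the bosonic formula) from the unreduced to the reduced side with an index shift. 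Your filtration would require controlling higher differentials to extract the same information. On the other hand, your approach keeps $n$ fixed and hence aligns more naturally with the topological reduced/unreduced comparison for a single torus knot, which is what one would ultimately need to turn either observation into a genuine proof.
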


\begin{example} (cf. \cite{g})
Let us compute $\Kh_{alg}^{red}(4,\infty)$. 
We have
$$d_2(\xi_1)=0,\quad d_2(\xi_2)=x_1^2,\quad d_2(\xi_3)=2x_1x_2.$$
As in Example \ref{ex2}, we can introduce a homology generator 
$\overline{\mu_0}=2x_2\xi_2-x_1\xi_3$ and check that the homology is spanned 
by the expressions of the form $p(x_2)+\alpha x_1+\overline{\mu_0}q(x_2)$
up to multiplication by polynomials in $\xi_1$ and $x_3$. 
Therefore the Poincar\'e series has the  form
$$\overline{P}_{4,\infty}(q,t)=\frac{1+q^6t^3}{ 1-q^8t^6}\left(q^4t^2+\frac{1+q^8t^5}{ 1-q^6t^4}\right).$$
\end{example}

Remark that the reduced and unreduced differentials look similar up to a shift of grading. 
Modulo multiplication by $\xi_1$ and $x_{n-1}$, we can replace 
$\xi_{i}$ by $\xi_{i-2}$ and $x_i$ by $x_{i-1}$ to get the unreduced stable homology of the $(n-2,\infty)$ knot.
We get the following result.

\begin{lemma}
$ \displaystyle \Kh_{alg}^{red}(n,\infty) \simeq \Kh_{alg}(n-2,\infty) \otimes \CC[x_{n-1},\xi_1]. $
\end{lemma}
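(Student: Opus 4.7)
The plan is to exhibit the reduced Koszul complex for $T(n,\infty)$ as a tensor product of two DGA's: a trivial factor $\CC[x_{n-1}]\otimes\Lambda^*[\xi_1]$ on which $d_2$ vanishes, and a factor that, after a shift of indices, is isomorphic to the unreduced complex for $T(n-2,\infty)$. Taking homology then yields the lemma.

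First I would record two key facts about the reduced differential. Since the sum defining $d_2(\xi_1)=\sum_{k=1}^{0} x_k x_{1-k}$ is empty, $\xi_1$ is a cycle. Moreover, for every $m\le n-1$ the expression $d_2(\xi_m)=\sum_{k=1}^{m-1} x_k x_{m-k}$ involves only $x_1,\ldots,x_{n-2}$, since both $k$ and $m-k$ lie between $1$ and $m-1\le n-2$. Thus $x_{n-1}$ never appears on the right-hand side of any of the defining equations for $d_2$. Because $d_2$ is a derivation vanishing on the generators $x_{n-1}$ and $\xi_1$, the subalgebras $B:=\CC[x_1,\ldots,x_{n-2}]\otimes\Lambda^*[\xi_2,\ldots,\xi_{n-1}]$ and $C:=\CC[x_{n-1}]\otimes\Lambda^*[\xi_1]$ are both $d_2$-stable, the restriction of $d_2$ to $C$ is identically zero, and the natural multiplication map $B\otimes C\to \CC[x_1,\ldots,x_{n-1}]\otimes\Lambda^*[\xi_1,\ldots,\xi_{n-1}]$ is an isomorphism of bigraded DGA's.

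Next I would identify $(B,d_2|_B)$ with the unreduced complex for $(n-2,\infty)$ via the relabeling $x'_k:=x_{k+1}$ and $\xi'_m:=\xi_{m+2}$ for $k,m=0,\ldots,n-3$. Direct substitution gives $d_2(\xi'_m)=d_2(\xi_{m+2})=\sum_{k=1}^{m+1} x_k\,x_{m+2-k}=\sum_{k'=0}^{m} x'_{k'}\,x'_{m-k'}$, which is exactly the unreduced differential on $\CC[x'_0,\ldots,x'_{n-3}]\otimes\Lambda^*[\xi'_0,\ldots,\xi'_{n-3}]$. Hence $H(B,d_2|_B)\cong \Kh_{alg}(n-2,\infty)$ as bigraded algebras, up to the implicit regrading of generators coming from the index shift (which the authors flag in the paragraph preceding the lemma).

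Finally, since $d_2|_C=0$, the K\"unneth formula is immediate and gives $H(B\otimes C)\cong H(B)\otimes C$, which together with the previous identification yields the asserted decomposition $\Kh_{alg}^{red}(n,\infty)\cong \Kh_{alg}(n-2,\infty)\otimes \CC[x_{n-1},\xi_1]$. The only substantive step is the elementary index-shift verification of the differentials; the rest is formal from the derivation property of $d_2$ and the triviality of K\"unneth over a factor with zero differential, so I do not anticipate any real obstacle.
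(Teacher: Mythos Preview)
Your proposal is correct and follows exactly the approach sketched in the paper: the paragraph preceding the lemma already notes that $\xi_1$ and $x_{n-1}$ can be factored out and that the index shift $x_i\mapsto x_{i-1}$, $\xi_i\mapsto \xi_{i-2}$ identifies the remaining complex with the unreduced one for $n-2$. You have simply supplied the details (the derivation property, the explicit index-shift computation, and the K\"unneth step) that the paper leaves implicit.
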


Note that this isomorphism does not respect the \(q\) and \(t\) gradings. However, it is not difficult to  reconstruct the grading shifts for this correspondence and obtain an analogue of  equation (\ref{pn}) 
for the reduced homology.  

\begin{conjecture}
The Poincar{\'e} series of \(\Kh_{alg}^{red}(n,\infty)\) has the form
\begin{equation}
\label{pnred}
\overline{P}_{n}(q,t)=\frac{(1+q^6t^3)}{ \prod_{k=1}^{n-1}(1-q^{2k+2}t^{2k})}\sum_{p=0}^{\infty}(-1)^{p}\prod_{k=1}^{p}(1-q^{2k+2}t^{2k})\times \\
\end{equation}
$$\times\prod_{k=3p+1}^{n-1}(1+q^{2k+12}t^{2k+7})\prod_{k=1}^{2p-1}(1+q^{2k+6}t^{2k+3})\times$$
$$[q^{5p^2+5p}t^{5p^2+p}(1+q^{6p+10}t^{6p+5})(1+q^{6p+12}t^{6p+7})\binom{n-2p+1}{p}_{z}+$$
$$q^{5p^2+11p+6}t^{5p^2+7p+3}(1+q^{6p+12}t^{6p+7})(1-q^{2p+4}t^{2p+2})\binom{n-2p}{p}_{z}-$$
$$q^{5p^2+13p+8}t^{5p^2+9p+4}(1+q^{2p+4} t^{2p+3})(1+q^{4p+6}t^{4p+3})\binom{n-2p-1}{p}_{z}],$$
where  the second product under the inside sum
is $1$ when $3p+1>n-1$.
\end{conjecture}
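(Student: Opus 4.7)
The plan is to derive (\ref{pnred}) from (\ref{pn}) using the isomorphism established in the preceding Lemma, $\Kh_{alg}^{red}(n,\infty) \cong \Kh_{alg}(n-2,\infty) \otimes \CC[x_{n-1},\xi_1]$. Since $x_{n-1}$ and $\xi_1$ contribute Poincar\'e factors $\tfrac{1}{1-q^{2n}t^{2n-2}}$ and $1+q^{6}t^{3}$ respectively, this gives
\[
\overline{P}_n(q,t)\;=\;P_{n-2}^{\mathrm{sh}}(q,t)\cdot\frac{1+q^{6}t^{3}}{1-q^{2n}t^{2n-2}},
\]
where $P_{n-2}^{\mathrm{sh}}$ denotes (\ref{pn}) with $n$ replaced by $n-2$ and with each generator regraded according to the identifications $x_j\leftrightarrow x_{j+1}$, $\xi_j\leftrightarrow\xi_{j+2}$ induced by the Lemma.

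Reading off the bidegree shifts, a generator $x_j$ acquires the bidegree $q^{2j+4}t^{2j+2}$ of $x_{j+1}$, a shift by $z=q^{2}t^{2}$, while $\xi_j$ acquires the bidegree $q^{2j+8}t^{2j+5}$ of $\xi_{j+2}$, a shift by $z^{2}=q^{4}t^{4}$; the composite generators $\mu_j$, being bilinear in $x$ and $\xi$, therefore shift by $z\cdot z^{2}=z^{3}$. Each product in (\ref{pn}) may be matched to a single kind of generator: $(1-q^{2k}t^{2k-2})$ to $x_{k-1}$, $(1+q^{2k+2}t^{2k-1})$ to $\xi_{k-1}$, and $(1+q^{2k+6}t^{2k+1})$ to $\mu_{k-1}$; applying the respective shifts converts them term-for-term into $(1-q^{2k+2}t^{2k})$, $(1+q^{2k+6}t^{2k+3})$ and $(1+q^{2k+12}t^{2k+7})$, which are exactly the three products appearing in (\ref{pnred}). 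The shifted $x$-denominator $\prod_{k=1}^{n-2}(1-q^{2k+2}t^{2k})$ merges with the $(1-q^{2n}t^{2n-2})$ contributed by $x_{n-1}$ to produce the full denominator $\prod_{k=1}^{n-1}(1-q^{2k+2}t^{2k})$ of (\ref{pnred}).

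The main bookkeeping obstacle is matching the monomial prefactors $q^{5p^2+p}t^{5p^2-3p}$, $q^{5p^2+7p+2}t^{5p^2+3p-1}$ and $q^{5p^2+9p+4}t^{5p^2+5p}$ inside the three bracketed summands of (\ref{pn}) with the corresponding prefactors of (\ref{pnred}). These are not bidegrees of individual generators but baseline contributions of a ``partition of depth $p$'' in the state-sum derivation of the bosonic formula, so picking up the correct composite power of $z$ requires knowing the $x$- and $\xi$-counts of such a state as a function of $p$. The cleanest route is to introduce a trivariate refinement $P_{n-2}(q,t,u)$ in which $u$ tracks $\xi$-count, derive a trigraded analogue of (\ref{pn}) (analogous to the trigrading that produces (\ref{krr}) from (\ref{pn})), and then specialize $u$ to the value dictated by the shift; the apparent mismatch between the indicator $\chi_p^+$ in (\ref{pn}) and its absence from (\ref{pnred}) should be absorbed into the $p=0$ exceptional case together with the prefactor $(1+q^{6}t^{3})$ contributed by $\xi_1$.
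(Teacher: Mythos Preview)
Your approach is precisely the one the paper takes. Immediately before stating this conjecture, the paper remarks that the isomorphism of the Lemma ``does not respect the $q$ and $t$ gradings'' but that ``it is not difficult to reconstruct the grading shifts for this correspondence and obtain an analogue of equation~(\ref{pn}) for the reduced homology.'' That one sentence is the entirety of the paper's derivation; no further detail is supplied. Your identification of the shifts $x_j\mapsto x_{j+1}$ (by $z$) and $\xi_j\mapsto\xi_{j+2}$ (by $z^2$), and your factor-by-factor translation of the products, is exactly the intended computation.

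Your worry about the monomial prefactors is legitimate but you are making it harder than necessary. Since both (\ref{pn}) and (\ref{pnred}) are themselves conjectures obtained from computer experiment, the passage from one to the other is not meant to be an airtight derivation: one applies the shifts to the structured pieces of (\ref{pn}) (the products, which do carry identifiable $x$/$\xi$/$\mu$ content, as you noted), and then adjusts the remaining monomial weights and the $\chi_p^+$ term by direct comparison with the computed series in Appendix~B. A full trivariate refinement tracking $\xi$-degree would certainly work and is in the spirit of the $a$-graded identity~(\ref{krr}), but the paper does not go that far, and for the purposes of formulating the conjecture it is not required.
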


%TO BE CHECKED

\begin{remark}
It was conjectured in \cite{dgr} that 
$$\overline{P}_{2,\infty}(q,t)=(1+q^6t^3)(1+q^4t^2+q^8t^4+\ldots),$$
$$\overline{P}_{3,\infty}(q,t)=\frac{1+q^4t^2+q^6t^3+q^{10}t^5}{1-q^6t^4},$$
$$\overline{P}_{4,\infty}(q,t)=\frac{(1+q^6t^3)}{ (1-q^8t^6)}\left(1+q^4t^2+\frac{q^6t^4(1+q^8t^5)}{ 1-q^6t^4}\right).$$
One can check that these answers coincide with the above construction. (See Appendix B for a comparison).
\end{remark}

\section{Matrix factorizations}

In different physical models of knot homology (e.g. \cite{gw},\cite{gs}) the colored homology of the unknot
is described in terms of matrix factorizations. Following Rozansky's observation that the infinite torus braid is a categorified Jones-Wenzl projector \cite{rozansky}, we  focus on the 
unknot coloured by the $n$th symmetric power of the fundamental representation of $\Sl(N)$.

\begin{definition}(\cite{eisenbud})
A matrix factorisation of a function $W$ over a ring $R$ is a pair $(M,d)$, where $M=M_0+M_1$ is a $\mathbb{Z}_2$-graded  $R$-module of finite rank equipped with an $R$-linear map $d$ of odd degree  satisfying the equation $d^2=W\cdot id_{M}.$
\end{definition}

We will need the following basic facts about matrix factorizations:

\begin{theorem}
\label{factsmf}
(a) (\cite{hkr}) The Hochschild cohomology of the algebra of functions on $\mathbb{C}^{n}$  is equal to the 
algebra of polyvector fields on $\mathbb{C}^{n}$.

(b) (\cite{dyckerhoff}) Consider a function $W:\mathbb{C}^{n}\rightarrow \mathbb{C}.$ Then Hochschild cohomology $\HHH(\MF(W))$ of the category of
matrix factorizations of $W$ is  equal to  the Koszul homology of the complex obtained from polyvector fields by the contraction with $dW$. 

(c) (e.g. \cite{dyckerhoff}) If $W$ has an isolated singularity, then $\HHH(\MF(W))$  is isomorphic to the Milnor algebra of $W$ at this singularity:
$$\HHH(\MF(W))=\mathbb{C}[x_1,\ldots,x_n]/\left(\frac{\partial W}{\partial x_1},\ldots,\frac{\partial W}{ \partial x_n}\right).$$
\end{theorem}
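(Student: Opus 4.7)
The plan is to treat the three parts in turn. Each is a classical result cited from the literature rather than something we need to prove from scratch, so the task here is really to unpack the citations and present the key ideas; the work is book-keeping rather than new mathematics.

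For part (a), I would invoke the Hochschild--Kostant--Rosenberg theorem in its most elementary form. Let $R=\CC[x_1,\ldots,x_n]$. The Koszul resolution of $R$ as a module over $R\otimes R$ is a free resolution, because the sequence $(x_i\otimes 1-1\otimes x_i)_{i=1}^n$ is regular in $R\otimes R$. Applying $\Hom_{R\otimes R}(-,R)$ to this resolution yields a complex with terms $\Lambda^k\mathrm{Der}(R)$ and vanishing differentials, which is by definition the algebra of polyvector fields on $\mathbb{C}^n$. No further input is required, and smoothness of $R$ is what makes the Koszul resolution work.

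For part (b), I would follow Dyckerhoff's approach: choose a compact generator $G$ of $\MF(W)$ (for example a Koszul-type matrix factorization built from a regular system of parameters at a smooth point of the critical locus), and identify $\HHH(\MF(W))$ with the Hochschild cohomology of the dg-endomorphism algebra $\Ext^*_{\MF(W)}(G,G)$. A direct calculation with the Koszul model of $G$ then shows that this endomorphism complex is $(\Lambda^*\mathrm{Der}(R),\iota_{dW})$, where the contraction $\iota_{dW}$ with $dW$ replaces the vanishing differential from part (a); the appearance of $\iota_{dW}$ reflects the curvature $W$ in the matrix factorization category. Part (c) is then a specialization of (b): if $W$ has an isolated singularity at the origin, the partial derivatives $\partial W/\partial x_i$ form a regular sequence in $R$, so the Koszul complex $(\Lambda^*\mathrm{Der}(R),\iota_{dW})$ is acyclic in positive degree and its degree-zero cohomology is exactly the Milnor algebra $R/(\partial W/\partial x_1,\ldots,\partial W/\partial x_n)$. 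In the non-isolated setting relevant to the present paper, regularity fails and higher Koszul cohomology contributes, which is precisely why the statement in the introductory theorem uses the full Koszul complex rather than just its degree-zero piece.

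The main conceptual hurdle is making sense of ``Hochschild cohomology of the category of matrix factorizations'' as a $\ZZ/2$-graded dg-category (or, more precisely, a curved dg-category with curvature $W$); once one commits to such a framework --- for instance the one used in \cite{dyckerhoff} --- the three parts reduce to standard Koszul and HKR-type computations. The substantive mathematics in this section of the paper is not the present theorem but its subsequent application, which identifies the Koszul complex of Conjecture~\ref{conj:main} with $(\Lambda^*\mathrm{Der}(R),\iota_{dW})$ for an explicit non-isolated potential $W$.
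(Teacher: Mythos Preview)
Your proposal is correct and matches the paper's treatment: the paper offers no proof of (a) or (b) beyond the citations, and for (c) simply remarks that one may dualize the Koszul complex of (b) to differential forms with $D_W(dx_i)=\partial W/\partial x_i$, after which (c) follows from the standard fact that an isolated singularity is equivalent to the partial derivatives forming a regular sequence. Your write-up is in fact more detailed than the paper's, but the logical route --- cite HKR for (a), cite Dyckerhoff for (b), and deduce (c) from (b) via regularity of the Jacobian ideal --- is identical.
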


Remark that we can dualize the Koszul complex (b) and obtain the differential 
$$D_{W}(dx_{i})=\frac{\partial W}{ \partial x_i}$$
acting as a derivation on the algebra $\mathbb{C}[x_1,\ldots,x_n,dx_1,\ldots,dx_n]$ of differential forms on $\mathbb{C}^n$.
Now  part (c) follows from the well known fact that $W$ has an isolated singularity if and only if its partial derivatives form a regular sequence.

The following potentials (with isolated singularities) were proposed for the totally symmetric representations by Gukov and Walcher:
\begin{conjecture}(\cite{gw}) The generating function for the $(\Sl(N),S^{k})$ potentials has the form:
$$\sum_{N=0}^{\infty}z^{N+k}(-1)^{N}W_{\Sl(N),S^{k}}(x_1,\ldots,x_k)=\left(1+\sum_{i=1}^{k}z^{i}x_{i}\right)\ln\left(1+\sum_{i=1}^{k}z^{i}x_{i}\right)$$
The$(\Sl(N),S^{k})$ colored homology of the unknot coincides with the Milnor algebra of the corresponding potential.
\end{conjecture}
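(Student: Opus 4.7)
The conjecture naturally splits into two claims: (i) an identity giving a closed form for the generating function of the potentials $W_{\Sl(N),S^{k}}$, and (ii) the identification of the $(\Sl(N),S^{k})$-colored unknot homology with the Milnor algebra of that potential. My plan is to attack them in this order.

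For (i), the strategy is to fix a mathematically precise definition of $W_{\Sl(N),S^{k}}$---for instance the Landau--Ginzburg potential of Wu or Yonezawa, whose Jacobian ring is already known to reproduce the $S^{k}$-colored Khovanov--Rozansky homology of the unknot---and then match it against the $z^{N+k}$-coefficient of $(1+X)\ln(1+X)$, where $X=\sum_{i=1}^{k}z^{i}x_{i}$. A direct expansion gives
$$(1+X)\ln(1+X)=X+\sum_{n\geq 2}\frac{(-1)^{n}}{n(n-1)}X^{n},$$
and differentiation yields
$$\partial_{x_{i}}\bigl[(1+X)\ln(1+X)\bigr]=z^{i}\bigl(1+\ln(1+X)\bigr).$$
Thus the Jacobian ideal of the $z^{N+k}$-coefficient is generated by the $z^{N+k-i}$-coefficients of $\ln(1+X)$ for $1\leq i\leq k$. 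Under the substitution $x_{i}=e_{i}(y_{1},\dots,y_{k})$, these are (up to sign) the Newton power sums $p_{N+k-i}(y)$, so the Milnor algebra is $\mathbb{C}[e_{1},\dots,e_{k}]/(p_{N+1},\dots,p_{N+k})$, which is exactly the presentation appearing in Wu/Yonezawa. A short integration argument upgrades equality of Jacobian ideals to equality of the potentials themselves.

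For (ii), once (i) is established, Theorem~\ref{factsmf}(c) directly identifies $\HHH(\MF(W))$ with the Jacobian ring, provided $W$ has an isolated singularity (which holds in the stable range $k\leq N$, since the Jacobian quotient above is the cohomology ring of a Grassmannian, hence finite-dimensional). The remaining step is to identify this Hochschild cohomology with the $(\Sl(N),S^{k})$-colored Khovanov--Rozansky homology of the unknot itself. I would do this by combining Rozansky's identification of the infinite torus braid with a categorified Jones--Wenzl projector with the fact that the symmetric projector is a direct summand of the $k$-cabled unknot, so that the colored homology is computed by the Hochschild cohomology of $\MF(W)$ on the nose.

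The hardest step, and the main obstacle, is that $W_{\Sl(N),S^{k}}$ in \cite{gw} is defined by physical reasoning rather than by a closed mathematical formula. Most of the proof effort lies in constructing a rigorous dictionary between this physics-motivated definition and a mathematical avatar (Wu's, Yonezawa's, or the symmetric Khovanov--Rozansky LG potential), and in tracking the normalization of gradings consistently. Once this dictionary is in place, (i) collapses to the Newton-identity computation above, and (ii) follows formally from Theorem~\ref{factsmf}(c) combined with existing matrix-factorization constructions of the $S^{k}$-colored unknot invariant.
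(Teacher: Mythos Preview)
The statement you are trying to prove is labeled a \emph{conjecture} in the paper, attributed to Gukov--Walcher \cite{gw}, and the paper makes no attempt to prove it. It is quoted as background from the physics literature, after which the authors simply extract the cubic bihomogeneous piece $\overline{W}_n$ of the potential $W_{phys}$ and show that the Koszul complex for $\overline{W}_n$ reproduces their differential $d_2$. There is therefore no ``paper's own proof'' to compare your proposal against.

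As to the proposal itself: your outline is reasonable as a strategy for turning the conjecture into a theorem, and you correctly identify the real obstacle, namely that $W_{\Sl(N),S^k}$ in \cite{gw} is specified by physics rather than by a mathematical definition. Your computation of the Jacobian ideal of the $z^{N+k}$-coefficient of $(1+X)\ln(1+X)$ via power sums is correct and does indeed recover the Wu/Yonezawa presentation $\mathbb{C}[e_1,\ldots,e_k]/(p_{N+1},\ldots,p_{N+k})$. However, the step ``a short integration argument upgrades equality of Jacobian ideals to equality of the potentials themselves'' is not quite right: two potentials with the same Jacobian ideal need not be equal (they can differ by a constant, or more seriously be related by a formal change of variables), so at best you would get equality up to right-equivalence, which is all that matters for part (ii) but not for the literal generating-function identity (i). For part (ii), invoking Theorem~\ref{factsmf}(c) is fine once the singularity is isolated, but the bridge from $\HHH(\MF(W))$ to the actual colored Khovanov--Rozansky homology of the unknot via Rozansky's projector is itself a nontrivial statement that would need its own argument; you are packaging a substantial claim into a single sentence there.
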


We slightly change notations and write

$$W_{phys}(x_0,\ldots,x_{n-1})=W_{\Sl(2),S^{n}}(x_0,\ldots,x_{n-1})=\Coef_{n+2}\left[(1+zx(z))\ln(1+zx(z))\right],$$
where 
$x(z)=\sum_{i=0}^{n-1}z^{i}x_i.$

%({\bf Jake: this assumption seems really weird to me. The t-grading in the colored \(sl(2)\) homology seems like it should have nothing to do with the potential for the matrix factorization.})
Let us assume that the $x_i$ are bigraded as above: $\deg(x_i)=q^{2i+2}t^{2i}.$ Then the differential $D_{W}$ will preserve both gradings iff
$W$ is bihomogeneous. Let $\overline{W}_n(x_0,\ldots,x_{n-1})$ be the piece of bidegree $(2n+4,2n-2)$ in $W_{phys}$.

\begin{lemma}
The potential $\overline{W}_n$ can be written as follows:
$$\overline{W_n}(x_0,\ldots,x_{n-1})=-\frac{1}{6}\Coef_{n-1}[x(z)^{3}].$$
\end{lemma}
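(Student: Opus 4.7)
The plan is a direct power-series expansion combined with bidegree bookkeeping; no deep input is required. First I would expand $(1+u)\ln(1+u)$ as a Taylor series about $u=0$. Starting from $\ln(1+u)=\sum_{k\ge 1}(-1)^{k-1}u^k/k$ and writing $(1+u)\ln(1+u)=\ln(1+u)+u\ln(1+u)$, the combined coefficient of $u^k$ for $k\ge 2$ is $\frac{(-1)^{k-1}}{k}+\frac{(-1)^{k-2}}{k-1}=\frac{(-1)^k}{k(k-1)}$, giving
\[
(1+u)\ln(1+u) \;=\; u + \sum_{k\ge 2}\frac{(-1)^k}{k(k-1)}\,u^k.
\]

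Next I would substitute $u = zx(z)$ and track bidegrees. A monomial $z^k\, x_{i_1}\cdots x_{i_k}$ with $s:=i_1+\cdots+i_k$ carries $z$-degree $k+s$ and bidegree $q^{2s+2k}t^{2s}$, so extracting the coefficient of $z^{n+2}$ pins $s=n+2-k$ and produces bidegree $q^{2n+4}t^{2n+4-2k}$. In particular every monomial of $W_{phys}=\Coef_{n+2}[(1+zx(z))\ln(1+zx(z))]$ already has $q$-degree $2n+4$, so the only nontrivial constraint is on the $t$-grading, and the requirement $t$-degree $=2n-2$ forces $k=3$ uniquely. (The $k=1$ term $u=zx(z)$ is irrelevant both because of the bidegree and because $\Coef_{n+1}[x(z)]=0$ for a polynomial of $z$-degree $n-1$.)

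Finally, extracting only the $k=3$ summand,
\[
\overline{W}_n \;=\; \frac{(-1)^3}{3\cdot 2}\,\Coef_{n+2}\!\left[z^3 x(z)^3\right] \;=\; -\tfrac{1}{6}\,\Coef_{n-1}\!\left[x(z)^3\right],
\]
which is the claimed identity. The ``hard part,'' insofar as there is one, is simply the observation that multiplication by $u^k=z^k x(z)^k$ shifts the $q$-to-$t$ gap in a predictable way, so that the bigrading pins down a unique term of the series; the rest is mechanical. The content of the lemma is thus that the Koszul-relevant piece of the Gukov--Walcher potential is exactly the cubic term of the Taylor expansion, matching precisely the quadratic differential $d_2(\xi_m)=\sum_k x_k x_{m-k}$ that defines $\Kh_{alg}$.
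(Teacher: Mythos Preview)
Your proof is correct and follows essentially the same approach as the paper: expand $(1+u)\ln(1+u)$ as a power series, then use the bigrading to isolate the cubic term. The paper compresses your explicit bidegree bookkeeping into the single observation that the $q$--$t$ gap for each $x_i$ equals $2$, so that a bidegree with $q$--$t$ gap $6$ must be cubic; otherwise the arguments are identical.
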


\begin{proof}
Remark that the difference between $q$- and $t$-gradings for $x_i$ is equal to 2. Therefore the piece of bidegree $(2n+4,2n-2)$ should be cubic in $x_i$. Now
$$(1+zx(z))\ln(1+zx(z))=zx(z)+\frac{1}{2}z^2x(z)^{2}-\frac{1}{6}z^3x(z)^{3}+\ldots,$$
so the cubic part equals to $-\frac{1}{6}z^3x(z)^{3}$.
\end{proof}

\begin{example}
We have $$\overline{W}_0=-\frac{1}{6}x_0^3,\quad \overline{W}_1=-\frac{1}{2}x_0^2x_1.$$
Remark that $\overline{W}_1$ has a non-isolated singularity.
\end{example}

\begin{theorem}
The Hochschild homology of the category of matrix factorizations of the potential $\overline{W}_n$ 
is isomorphic to the homology of $d_2$.
\end{theorem}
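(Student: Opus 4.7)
My plan is to apply Theorem~\ref{factsmf}(b) and then directly match the resulting Koszul complex with the one defining $d_2$. Since $\overline{W}_n$ has a non-isolated singularity for $n>1$, part (c) of that theorem is unavailable; this is the conceptual point that the proof must negotiate. Instead, part (b) still gives a concrete model: by dualizing the Koszul complex of contraction with $d\overline{W}_n$, we obtain
\[
\HHH(\MF(\overline{W}_n)) \;\cong\; H^{*}\!\bigl(\CC[x_0,\ldots,x_{n-1}]\otimes \Lambda^{*}[dx_0,\ldots,dx_{n-1}],\, D_W\bigr),\qquad D_W(dx_i)=\frac{\partial \overline{W}_n}{\partial x_i},
\]
extended as a derivation. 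Both complexes carry the natural bigrading coming from $\deg(x_i)=q^{2i+2}t^{2i}$ (with $D_W$ forced to lower the $t$-grading by one, which fixes $\deg(dx_i)=q^{2n+2-2i}t^{2n-1-2i}$).

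The second step is a direct computation of the gradient. Using the formula
\[
\overline{W}_n=-\tfrac{1}{6}\,\Coef_{z^{n-1}}\!\bigl[x(z)^{3}\bigr]
=-\tfrac{1}{6}\sum_{j_1+j_2+j_3=n-1}x_{j_1}x_{j_2}x_{j_3},
\]
symmetry of the summation gives
\[
\frac{\partial \overline{W}_n}{\partial x_i}=-\tfrac{1}{2}\sum_{j_1+j_2=n-1-i}x_{j_1}x_{j_2}
=-\tfrac{1}{2}\,\Coef_{z^{\,n-1-i}}\!\bigl[x(z)^{2}\bigr].
\]

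The third step is the identification with $d_2$. Define the isomorphism of bigraded algebras
\[
\Phi:\CC[x_0,\ldots,x_{n-1}]\otimes \Lambda^{*}[\xi_0,\ldots,\xi_{n-1}]\;\longrightarrow\;\CC[x_0,\ldots,x_{n-1}]\otimes \Lambda^{*}[dx_0,\ldots,dx_{n-1}]
\]
by $\Phi(x_i)=x_i$ and $\Phi(\xi_m)=-2\,dx_{n-1-m}$. The bidegree check done above shows $\Phi$ preserves both the $q$- and $t$-gradings, and then
\[
D_W\bigl(\Phi(\xi_m)\bigr)=-2\cdot\Bigl(-\tfrac{1}{2}\sum_{k=0}^{m}x_k x_{m-k}\Bigr)=\sum_{k=0}^{m}x_k x_{m-k}=\Phi(d_2(\xi_m)),
\]
so $\Phi$ intertwines $d_2$ with $D_W$. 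Passing to homology yields the desired isomorphism.

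The main obstacle is conceptual rather than computational: the naive application of Theorem~\ref{factsmf}(c) fails because $\overline{W}_n$ is not an isolated singularity (for $n>1$ the partial derivatives form a non-regular sequence), so the Milnor algebra description is invalid and one really must use the full derived Koszul complex from (b). The good news is that the $d_2$ complex of Conjecture~\ref{conj:main} is itself the Koszul complex of the \emph{same} non-regular sequence $\sum_{k}x_k x_{m-k}$ (up to relabeling and the scalar $-1/2$), so the non-regularity appearing on both sides is exactly what makes the match work beyond the bottom level.
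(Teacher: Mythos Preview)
Your proof is correct and follows essentially the same approach as the paper: invoke Theorem~\ref{factsmf}(b), compute $\partial \overline{W}_n/\partial x_i = -\tfrac{1}{2}\,\Coef_{z^{n-1-i}}[x(z)^2]$, and identify $\xi_m \leftrightarrow -2\,dx_{n-1-m}$. Your version is slightly more thorough in explicitly verifying the bigrading compatibility and in framing the non-isolatedness of the singularity as the reason one must use (b) rather than (c), but the argument is the same.
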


\begin{proof}
By Theorem \ref{factsmf}, it is sufficient to study the Koszul complex associated with the partial derivatives of  $\overline{W}_n$.
We have
$$\frac{\partial}{ \partial x_i}\overline{W}_n=-\frac{1}{6}\frac{\partial}{ \partial x_i}\Coef_{n-1}[x(z)^{3}]=
-\frac{1}{2}\Coef_{n-1}\left[x(z)^2\frac{\partial}{ \partial x_i}x(z)\right]=$$ 
$$-\frac{1}{2}\Coef_{n-1}\left[z^{i}x(z)^2\right]=-\frac{1}{2}\Coef_{n-1-i}[x(z)^2].$$
Therefore 
$$D_{\overline{W}_n}(dx_{n-1-i})=\frac{\partial \overline{W}_n}{ \partial x_{n-1-i}}=-\frac{1}{2}\Coef_{i}[x(z)^2]=-\frac{1}{2}\sum_{j=0}^{i}x_jx_{j-i}.$$
It suffices to identify
$$\xi_{i}=-2dx_{n-1-i}.$$
\end{proof}

{\bf \Large Appendix}
\begin{appendix}
\section{Unreduced Poincar\'e series}

Here we collect the answers for the Poincar\'e series of \(\Kh_{alg}(n,\infty) \) for $n\le 7$. These series were computed using {\tt Singular} \cite{singular}. The resulting series coincide with Shumakovitch's computations \cite{shucomm} up to high $q$-degree.
For example, for $(7,20)$ torus knot the first difference is in $q$-degree 42.
%For example, for $(5,49)$ torus knot the first difference is in $q$-degree ?,
%for $(6,25)$ the first difference is in $q$-degree ?, for $(7,15)$ the first difference is in $q$-degree ?. 

For the reader's convenience, we multiply both parts of  equation (\ref{pn}) by $\prod_{i=1}^{n}(1-q^{2i}t^{2i-2}).$

$$(1-q^2)(1-q^4t^2)P_2(q,t)=(1 + q^{8}t^{3})-q^{4}(1 + q^{8}t^3) - q^{6}t^{2}(1 - q^2)(1 + q^{4}t)=$$
$$(1-q^2)(1 + q^2-q^{6}t^2 + q^{8}t^3);$$

$$\prod_{i=1}^{3}(1-q^{2i}t^{2i-2})P_3(q,t)= (1 + q^8t^3)(1 + q^{10}t^5) - 
 q^4(1 + q^8t^3)(1+q^{10}t^5) - $$ $$
 q^6t^2(1 - q^2)(1 + q^4t)(1 + q^2t^2)(1 + q^{10}t^5) - 
 q^{14}t^7(1 - q^2)(1 - q^{4}t^2)(1 + q^4t)=$$
 $$(1-q^2)(1-q^4t^2)(1 + q^2 + q^4 t^2 + q^8 t^3 + q^{10} t^5 + q^{12} t^5);$$

$$\prod_{i=1}^{4}(1-q^{2i}t^{2i-2})P_4(q,t) = (1 + q^{8}t^3)(1 + q^{10}t^5)(1 + q^{12}t^7) - 
 q^4(1 + q^8t^3)(1 + q^{10}t^5) (1 + q^{12}t^7) - $$ $$
 q^6t^2(1 - q^2)(1 + q^4t)(1 + q^{10}t^5)(1 + q^{12}t^7)(1 + q^{2}t^2 + q^{4}t^4) - $$ $$
 q^{14}t^7(1 - q^2)(1 - q^4t^2)(1 + q^4t)(1 + q^{12}t^7)(1 + q^2t^2) + 
 q^{18}t^{10}(1 - q^2)(1 + q^4t)(1 + q^6t^3)(1 + q^4t^3);$$

$$\prod_{i=1}^{5}(1-q^{2i}t^{2i-2})P_5(q,t) = \prod_{i=1}^{4}(1+q^{2i+6}t^{2i+1}) -
 q^4 \prod_{i=1}^{4}(1+q^{2i+6}t^{2i+1})  - $$ $$
  q^6t^2(1 - q^2)(1 + q^4t)(1 + q^2t^2 + q^4t^4 + q^6t^6) \prod_{i=2}^{4}(1+q^{2i+6}t^{2i+1}) - $$ $$
 q^{14}t^7(1 - q^2)(1 - q^4t^2)(1 + q^{4}t)(1 + q^{12}t^7)(1 + q^{14}t^9)(1 + q^2t^2 + q^4t^4) + $$ $$
 q^{18}t^{10}(1 - q^2)(1 + q^4t)(1 + q^{6}t^3)(1 + q^2t^2)(1 + q^{14}t^9)(1 + q^4t^3) + $$ $$
 q^{22}t^{14}(1 - q^2)(1 - q^4t^2)(1 + q^{4}t)(1 + q^6t^3)(1 + q^8t^5);$$

$$\prod_{i=1}^{6}(1-q^{2i}t^{2i-2})P_6(q,t) = \prod_{i=1}^{5}(1+q^{2i+6}t^{2i+1}) -
 q^4 \prod_{i=1}^{5}(1+q^{2i+6}t^{2i+1})  - $$ $$
 q^6t^2(1 - q^2)(1 + q^4t)(1 + q^2t^2 + q^4t^4 + q^6t^6 + q^8t^8)\prod_{i=2}^{5}(1+q^{2i+6}t^{2i+1}) - $$ $$
 q^{14}t^7(1 - q^2)(1 - q^4t^2)(1 + q^4t)(1 + q^{12}t^7)(1 + q^{14}t^9)(1 + q^{16}t^{11})(1 + q^2t^2 + q^4t^4 + q^6t^6) + $$ $$
 q^{18}t^{10}(1 - q^2)(1 + q^4t)(1 + q^6t^3)(1 + q^2t^2 + q^4t^4)(1+ q^{14}t^9)(1 + q^{16}t^{11})(1 + q^4t^3) + $$ $$
 q^{22}t^{14}(1 - q^2)(1 - q^4t^2)(1 + q^4t)(1 + q^6t^3)(1 + q^8t^5)(1 + q^{16}t^{11})(1 + q^2t^2 + q^4t^4) + $$ $$
 q^{36}t^{25}(1 - q^2)(1 - q^4t^2)(1 - q^6t^4)(1 + q^4t)(1 + q^6t^3)(1 + q^8t^5);$$

$$\prod_{i=1}^{7}(1-q^{2i}t^{2i-2})P_7(q,t) = \prod_{i=1}^{6}(1+q^{2i+6}t^{2i+1}) -
 q^4 \prod_{i=1}^{6}(1+q^{2i+6}t^{2i+1})  - $$ $$
 q^6t^2(1 - q^2)(1 + q^4t)(1 + q^2t^2 + q^4t^4 +q^6t^6 + q^8t^8 + q^{10}t^{10})\prod_{i=2}^{6}(1+q^{2i+6}t^{2i+1}) - $$ $$
 q^{14}t^7(1 - q^2)(1 - q^4t^2)(1 + q^4t)(1 + q^2t^2 + q^4t^4 +q^6t^6 + q^8t^8)\prod_{i=3}^{6}(1+q^{2i+6}t^{2i+1}) + $$ $$
 q^{18}t^{10}(1 - q^2(1 + q^4t)(1 + q^6t^3)(1 + q^4t^3)(1 + q^2t^2 + q^4t^4 + q^6t^6) (1 + q^{14}t^9)(1 + q^{16}t^{11})(1 + q^{18}t^{13}) + $$ $$
 q^{22}t^{14}(1 - q^2)(1 - q^4t^2)(1 + q^4t)(1 + q^6t^3)(1 + q^8t^5)\times$$ $$ \times(1 + q^{16}t^{11})(1 + q^{18}t^{13})(1 + q^2t^2 + 2q^4t^4 + q^6t^6 + q^8t^8) +$$ $$
 q^{36}t^{25}(1 - q^2)(1 - q^4t^2)(1 - q^6t^4)(1 + q^4t)(1 + q^6t^3)(1 + q^8t^5)(1 + q^{18}t^{13})(1 + q^2t^2 + q^4t^4) - $$ $$
 q^{42}t^{30}(1 - q^2)(1 - q^4t^2)(1 + q^4t)(1 + q^6t^3)(1 + q^8t^5)(1 + q^{10}t^7)(1 + q^6t^5);$$

\section{Reduced Poincar\'e series}

Here we collect the answers for the conjectural Poincar\'e series of stable 
reduced Khovanov homology for $n\le 7$.  The resulting series coincide with the data from \cite{shucomm} up to high $q$-degree.
For example, for the $(5,49)$ torus knot the first difference is in $q$-degree 100,
for $(6,25)$ the first difference is in $q$-degree 52, and for $(7,15)$ the first difference is in $q$-degree 32.

$$\overline{P}_{3}(q,t)=\frac{1+q^6t^3}{ \prod_{i=1}^{2}(1-q^{2i+2}t^{2i})}[1-q^8t^4];$$
$$\overline{P}_{4}(q,t)=\frac{1+q^6t^3}{ \prod_{i=1}^{3}(1-q^{2i+2}t^{2i})}[(1 + q^{14}t^{9})-q^{8}t^{4}(1 + q^{14}t^9) - q^{10}t^{6}(1 - q^4t^2)(1 + q^{8}t^{5})];$$
$$\overline{P}_{5}(q,t)=\frac{1+q^6t^3}{ \prod_{i=1}^{4}(1-q^{2i+2}t^{2i})}[(1 + q^{14}t^{9})(1 + q^{16}t^{11}) - 
 q^8t^4(1 + q^{14}t^{9})(1+q^{16}t^{11}) - $$ $$
 q^{10}t^6(1 - q^4t^2)(1 + q^8t^5)(1 + q^2t^2)(1 + q^{16}t^{11}) - 
 q^{22}t^{15}(1 - q^4t^2)(1 - q^{6}t^4)(1 + q^8t^5)];$$

$$\overline{P}_{6}(q,t)=\frac{1+q^6t^3}{ \prod_{i=1}^{5}(1-q^{2i+2}t^{2i})}[(1 + q^{14}t^9)(1 + q^{16}t^{11})(1 + q^{18}t^{13}) - $$ $$
 q^8t^4(1 + q^{14}t^9)(1 + q^{16}t^{11})(1 + q^{18}t^{13}) - $$ $$
 q^{10}t^6(1 - q^4t^2)(1 + q^8t^5)(1 + q^{16}t^{11})(1 + q^{18}t^{13})(1 + q^{2}t^2 + q^{4}t^4) - $$ $$
 q^{22}t^{15}(1 - q^4t^2)(1 - q^6t^4)(1 + q^8t^5)(1 + q^{18}t^{13})(1 + q^2t^2) + $$ $$
 q^{26}t^{18}(1 - q^4t^2)(1 + q^8t^5)(1 + q^{10}t^{7})(1 + q^6t^5)];$$

$$\overline{P}_{7}(q,t)=\frac{1+q^6t^3}{ \prod_{i=1}^{6}(1-q^{2i+2}t^{2i})}[\prod_{i=1}^{4}(1+q^{2i+12}t^{2i+7}) -
 q^8t^4 \prod_{i=1}^{4}(1+q^{2i+12}t^{2i+7})  - $$ $$
  q^{10}t^{6}(1 - q^4t^2)(1 + q^8t^5)(1 + q^2t^2 + q^4t^4 + q^6t^6) \prod_{i=2}^{4}(1+q^{2i+12}t^{2i+7}) - $$ $$
 q^{22}t^{15}(1 - q^4t^2)(1 - q^6t^4)(1 + q^{8}t^5)(1 + q^{18}t^{13})(1 + q^{20}t^{15})(1 + q^2t^2 + q^4t^4) + $$ $$
 q^{26}t^{18}(1 - q^4t^2)(1 + q^8t^5)(1 + q^{10}t^7)(1 + q^2t^2)(1 + q^{20}t^{15})(1 + q^6t^5) + $$ $$
 q^{30}t^{22}(1 - q^4t^2)(1 - q^6t^4)(1 + q^{8}t^5)(1 + q^{10}t^7)(1 + q^{12}t^{9})].$$

%$$\overline{P}_8(q,t)={1+q^6t^3\over \prod_{i=1}^{7}(1-q^{2i+2}t^{2i})}[(1 + q^{14}t^9) (1 + q^{16}t^{11}) (1 + q^{18}t^{13})%(1+q^{20}t^{15})(1+q^{22}t^{17} (1 + t^{-1})-$$ $$(1 + q^{8}t^5) (1 + q^{10}t^7) (1 + q^{12}t^9) (1 + q^{14}t^{11})(1+q^{16}t^{13})(1+q^{18}t^{15})t^{-1} +$$ $$(t^{-1} + t^{-2}) (1 - q^{4}t^2) (1 + 
%q^{8}t^5) (1 + q^{10}t^7)R_8(q,t)],$$
%where
%$$R_8(q,t)=q^{26}t^{20} + q^{28}t^{22} - q^{30}t^{23} + 2q^{30}t^{24}- 
%q^{32}t^{25}+q^{32}t^{26}+q^{34}t^{26}-2q^{34}t^{27} +$$ $$
%q^{34}t^{28}-q^{36}t^{29}-q^{38}t^{29}+q^{38}t^{30}-q^{38}t^{31}+
%q^{42}t^{32}+q^{42}t^{34}+q^{44}t^{35}+q^{48}t^{37}-$$ $$ 
%q^{48}t^{38}+q^{48}t^{39}+q^{50}t^{39}-q^{52}t^{40}+ 
%2q^{52}t^{41}-q^{52}t^{42}-q^{54}t^{42}+q^{56}t^{43}- $$ $$
%2q^{56}t^{44}+q^{56}t^{45}-q^{60}t^{46}+q^{60}t^{47}+q^{64}t^{49}$$

\section{$(7,9)$ torus knot}

We present the exact normalized Poincar\'e polynomial for the unreduced $\mathbb{Q}$-Khovanov homology of the $(7,9)$ torus knot,
obtained with JavaKh (\cite{katlas}).

$$q^{-47}P(T(7,9))=1 + q^2 + q^4 t^2 + q^8 t^3 + q^6 t^4 + q^8 t^4 + q^{10} t^5 + 
 q^{12} t^5 + q^8 t^6 + q^{10} t^6 + q^{12} t^7 + q^{14} t^7 +$$ $$ q^{10} t^8 + 
2 q^{12} t^8 + q^{14} t^9 + 2 q^{16} t^9 + q^{12} t^{10} + 2 q^{14} t^{10} + 
 q^{16} t^{11} + 3 q^{18} t^{11} + q^{14} t^{12} + 3 q^{16} t^{12} + q^{18} t^{12} + $$
$$ q^{22} t^{12} + q^{18} t^{13} + 4 q^{20} t^{13} + q^{22} t^{13} + 3 q^{18} t^{14} + 
 q^{20} t^{14} + q^{24} t^{14} + 5 q^{22} t^{15} + 2 q^{24} t^{15} + 2 q^{20} t^{16} + $$
$$ 3 q^{22} t^{16} + 2 q^{26} t^{16} + q^{28} t^{16} + 4 q^{24} t^{17} + 4 q^{26} t^{17} + 
 q^{22} t^{18} + 3 q^{24} t^{18} + 2 q^{28} t^{18} + q^{30} t^{18} + 2 q^{26} t^{19} + $$
$$ 5 q^{28} t^{19} + q^{24} t^{20} + 3 q^{26} t^{20} + q^{30} t^{20} + 2 q^{32} t^{20} + 
 q^{28} t^{21} + 5 q^{30} t^{21} + 2 q^{28} t^{22} + q^{30} t^{22} + 2 q^{34} t^{22} + $$
$$ 5 q^{32} t^{23} + q^{34} t^{23} + q^{30} t^{24} + 3 q^{32} t^{24} + 3 q^{36} t^{24} + 
 2 q^{34} t^{25} + 4 q^{36} t^{25} + q^{34} t^{26} + q^{36} t^{26} + 2 q^{38} t^{26} + $$
$$ q^{40} t^{26} + 3 q^{38} t^{27} + q^{40} t^{27} + q^{42} t^{27} + q^{38} t^{28} + 
 2 q^{42} t^{28} + 2 q^{42} t^{29} + q^{42} t^{30} + q^{46} t^{30} + q^{46} t^{31}.$$

The total dimension of the homology is equal to 134.
One can observe the multiplicative generators of the following (bi)degrees:

$$\deg(x_0)=q^2,~\deg(x_1)=q^4t^2,~\deg(x_2)=q^6t^4,~\deg(x_3)=q^8t^6,$$
$$\deg(x_4)=q^{10}t^8,~\deg(x_5)=q^{12}t^{10},~\deg(x_6)=q^{14}t^{12},$$
$$\deg(\mu_0)=q^8t^3,~\deg(\mu_1)=q^{10}t^5,~\deg(\mu_2)=q^{12}t^7,$$
$$\deg(\mu_3)=q^{14}t^9,~\deg(\mu_4)=q^{16}t^{11},~\deg(\mu_5)=q^{18}t^{13}.$$

Let us consider $\mathbb{Z}_2$ coefficients. The Poincar\'e polynomial is equal to:

$$q^{-47}P_{2}(T(7,9))=1 + q^2 + q^4 t^2 + q^6 t^2 + q^6 t^3 + q^8 t^3 + q^6 t^4 + q^8 t^4 + 
 q^{10}t^5 + q^{12}t^5 + q^8 t^6 + 2 q^{10}t^6 +$$ $$ q^{12}t^6 + q^{10}t^7 + 
2 q^{12}t^7 + q^{14}t^7 + q^{10}t^8 + 2 q^{12}t^8 + q^{14}t^8 + 
 2 q^{14}t^9 + 3 q^{16}t^9 + q^{18}t^9 + q^{12}t^{10} + 3 q^{14}t^{10} + $$
$$ 3 q^{16}t^{10} + q^{18}t^{10} + q^{14} t^{11} + 3 q^{16} t^{11} + 3 q^{18} t^{11} + 
 q^{20} t^{11} + q^{14} t^{12} + 3 q^{16} t^{12} + 3 q^{18} t^{12} + 2 q^{20} t^{12} +$$ 
$$ q^{22} t^{12} + 3 q^{18} t^{13} + 6 q^{20} t^{13} + 3 q^{22} t^{13} + 3 q^{18} t^{14} + 
 5 q^{20} t^{14} + 3 q^{22} t^{14} + q^{24} t^{14} + 3 q^{20} t^{15} + 6 q^{22} t^{15} +$$ 
$$ 4 q^{24} t^{15} + q^{26} t^{15} + 2 q^{20} t^{16} + 4 q^{22} t^{16} + 4 q^{24} t^{16} + 
 3 q^{26} t^{16} + q^{28} t^{16} + q^{22} t^{17} + 7 q^{24} t^{17} + 7 q^{26} t^{17} + $$
$$ q^{28} t^{17} + q^{22} t^{18} + 5 q^{24} t^{18} + 6 q^{26} t^{18} + 3 q^{28} t^{18} + 
 q^{30} t^{18} + q^{24} t^{19} + 5 q^{26} t^{19} + 7 q^{28} t^{19} + 3 q^{30} t^{19} + $$
$$ q^{24} t^{20} + 3 q^{26} t^{20} + 5 q^{28} t^{20} + 6 q^{30} t^{20} + 3 q^{32} t^{20} + 
 4 q^{28} t^{21} + 9 q^{30} t^{21} + 5 q^{32} t^{21} + 2 q^{28} t^{22} + $$
$$ 5 q^{30} t^{22} + 5 q^{32} t^{22} + 3 q^{34} t^{22} + q^{36} t^{22} + 2 q^{30} t^{23} + 
 6 q^{32} t^{23} + 6 q^{34}t^{23} + 2 q^{36} t^{23} + q^{30} t^{24} + 3 q^{32} t^{24} + $$
$$ 6 q^{34} t^{24} + 5 q^{36} t^{24} + q^{38} t^{24} + 4 q^{34} t^{25} + 6 q^{36} t^{25} + 
 2 q^{38} t^{25} + q^{34} t^{26} + 3 q^{36} t^{26} + 4 q^{38} t^{26} + 2 q^{40} t^{26} + $$
$$ q^{36} t^{27} + 4 q^{38} t^{27} + 4 q^{40} t^{27} + q^{42} t^{27} + q^{38} t^{28} + 
 3 q^{40} t^{28} + 2 q^{42} t^{28} + q^{40} t^{29} + 2 q^{42} t^{29} + q^{44} t^{29} + $$
$$ q^{42} t^{30} + 2 q^{44} t^{30} + q^{46} t^{30} + q^{44} t^{31} + q^{46} t^{31}.$$

\noindent The total dimension of the homology is 286 (about twice as big as  for $\mathbb{Q}$-coefficients), and
the bidegrees of the multiplicative generators are equal to:
$$\deg(x_0)=q^2,~\deg(x_1)=q^4t^2,~\deg(x_2)=q^6t^4,~\deg(x_3)=q^8t^6,$$
$$\deg(x_4)=q^{10}t^8,~\deg(x_5)=q^{12}t^{10},~\deg(x_6)=q^{14}t^{12},$$
$$\deg(\xi_1)=q^6t^3,~\deg(\xi_3)=q^{10}t^7,~\deg(\xi_5)=q^{14}t^{11}.$$
 
Finally, one can check that the Khovanov homology of this knot has nontrivial $\mathbb{Z}_7$-torsion in degree $q^{20}t^{14}$.

\end{appendix}

Eugene Gorsky

Department of Mathematics

Stony Brook University

Stony Brook, NY 11794

{\tt egorsky@math.sunysb.edu}

\medskip

Alexei Oblomkov

Department of Mathematics

University of Massachusetts, Amherst

Amherst, MA 01003

{\tt oblomkov@math.umass.edu}

\medskip

Jacob Rasmussen

Department of Pure Mathematics

University of Cambridge

Centre for Mathematics Sciences

Wilberforce Road, CB3 0WB

United Kingdom 

{\tt J.Rasmussen@dpmms.cam.ac.uk }
 \end{document}